\definecolor{black}{rgb}{0.0, 0.0, 0.0}
\definecolor{red}{rgb}{1.0, 0.5, 0.5}
\newcommand{\margnote}[1]{
\ifthenelse{\boolean{shownotes}}%
{\marginpar{\raggedright\tiny\texttt{#1}}}%
{}%
}
\newcommand{\hole}[1]{
\ifthenelse{\boolean{shownotes}}%
{\begin{center} \fbox{ \rule {.25cm}{0cm} \rule[-.1cm]{0cm}{.4cm}
\parbox{.85\textwidth}{\begin{center} \texttt{#1}\end{center}} \rule
{.25cm}{0cm}}\end{center}} {} }
	\title[Global existence and asymptotic stability for the TT model]{Global existence and asymptotic stability for the Toner--Tu model of flocking}
	\author{Young-Pil Choi}
\address[Young-Pil Choi]{\newline Department of Mathematics \newline
Yonsei University, 50 Yonsei-Ro, Seodaemun-Gu, Seoul 03722, Republic of Korea}
\email{ypchoi@yonsei.ac.kr}
\author{Kyungkeun Kang}
\address[Kyungkeun Kang]{\newline Department of Mathematics \newline
Yonsei University, 50 Yonsei-Ro, Seodaemun-Gu, Seoul 03722, Republic of Korea}
\email{kkang@yonsei.ac.kr}
\author{Woojae Lee}
\address[Woojae Lee]{\newline Department of Mathematics \newline
Yonsei University, 50 Yonsei-Ro, Seodaemun-Gu, Seoul 03722, Republic of Korea}
\email{woori0108@yonsei.ac.kr}
\numberwithin{equation}{section}
\newtheorem{theorem}{Theorem}[section]
\newtheorem{lemma}{Lemma}[section]
\newtheorem{remark}{Remark}[section]
\newcommand{\R}{\mathbb R}
\newcommand{\N}{\mathbb N}
\newcommand{\ls}{\lesssim}
\newcommand{\bq}{\begin{equation}}
\newcommand{\eq}{\end{equation}}
\newcommand{\e}{\varepsilon}
\newcommand{\lt}{\left}
\newcommand{\rt}{\right}
\newcommand{\pa}{\partial}
\def\moverlay{\mathpalette\mov@rlay}
\def\mov@rlay#1#2{\leavevmode\vtop{%
   \baselineskip\z@skip \lineskiplimit-\maxdimen
   \ialign{\hfil$\m@th#1##$\hfil\cr#2\crcr}}}
\newcommand{\charfusion}[3][\mathord]{
    #1{\ifx#1\mathop\vphantom{#2}\fi
        \mathpalette\mov@rlay{#2\cr#3}
      }
    \ifx#1\mathop\expandafter\displaylimits\fi}
\begin{document}
\allowdisplaybreaks

\date{\today}

\subjclass[]{}
\keywords{Active matter, Toner--Tu model, well-posedness, asymptotic stability.}

\begin{abstract}
This paper deals with the Toner--Tu (TT) model, which is a hydrodynamic model describing the collective motion of numerous self-propelled agents. We analytically study the global-in-time well-posedness of the TT model near the steady-state solution in the ordered phase. We also show the large-time behavior of solutions showing that the steady-state solution is polynomially stable in a Sobolev space in the sense that solutions that are initially close to that steady state converge to that at least polynomially fast as time tends to infinity. Moreover, we investigate the variant of the TT model which describes the dynamics of the actin filament. 
\end{abstract}

\maketitle \centerline{\date}

\tableofcontents

%
%
%
%
%
%
%
	\section{Introduction}

Active matter encompasses both living and non-living systems characterized by the presence of self-propelled particles, each capable of converting energy into coordinated motion. Theories surrounding active matter have made significant strides in elucidating the dynamics of suspensions composed of active polar particles, exemplified by phenomena like fish schooling, locust swarming, and bird flocking, see for instance \cite{CDP09, CCP17, CHL17, CDMBC07, CS07, DFMT19, Deg18, DP17, DCBC06, marchetti2013hydrodynamics, Tad21} and references therein. Vicsek et al.  \cite{vicsek1995novel} pioneered a model describing the collective motion of numerous organisms, a phenomenon known as {\it flocking}, within a nonequilibrium dynamical system. Subsequently, Toner and Tu further refined this model into a continuum framework, leveraging conservation laws and symmetry considerations from the Vicsek model \cite{toner1995long,toner1998flocks}. Various polar particle suspension systems related to the Toner-Tu (TT) model have been instrumental in explaining pattern formation in systems like actin filaments and myosin \cite{le2016pattern, husain2017emergent}, as well as explaining turbulence phenomena observed in bacterial colonies   \cite{wensink2012meso, linkmann2019phase} within active fluids. The pivotal insight offered by these models lies in their ability to observe a phase transition from disorder to order within these systems.	
	
In the seminal work \cite{toner1995long} of Toner and Tu, they delved into the nonequilibrium phase transition towards collective motion by formulating mesoscopic equations using a renormalization group approach in two dimensions. They further extended this study to a generalized model in the absence of Galilean invariance, elucidated in \cite{toner1998flocks}. Subsequently, Bertin et al. \cite{bertin2006boltzmann} and Ihle \cite{ihle2011kinetic} independently derived the deterministic TT model from the Boltzmann equation. These derivations have facilitated the analysis of microscopic phenomena through the lens of hydrodynamics. Numerical investigations of the TT model have been conducted under various conditions, including incompressibility \cite{chen2016mapping} and constant density \cite{besse2022metastability}.
	   
In the current work, we analytically study the global-in-time well-posedness of the TT model near the steady-state solution in the ordered phase and its asymptotic stability. Consider the TT model proposed in \cite{toner1995long}:
\begin{align}\label{original tt model}
\begin{split}
&v_t+\lambda v\cdot\nabla v=\alpha v-\beta \vert v\vert^2v+D_1\nabla{\rm {div}}\  v+D_2\Delta v-\nabla P+D_L(v\cdot\nabla)^2v +f,\\
&\rho_t+{\rm{div}}\  (v \rho)=0,
\end{split}
\end{align}
where  $\rho=\rho(x,t)$ represents the number density and $v=v(x,t)$ denotes the velocity fields at position $x\in\mathbb{R}^d$ and time $t>0$ with $d=2,3$. The parameters $\beta, D_1, D_2, D_L$ are positive constants and the $\lambda$ is a dimensionless parameter. Additionally, $\alpha<0$ characterizes the disordered phase, whereas $\alpha>0$ signifies the ordered state, delineating the phase transition. Here, $f=f(x,t)$ represents Gaussian random noise, and the pressure $P=P(\rho)$ is expressed as an expansion centered around $\rho_m$, given by $P(\rho)=\sum_{n=1}^{\infty}\sigma_n(\rho-\rho_m)^n$, where $\rho_m$ denotes the mean of the local number density, and $\sigma_n$ stands for a constant in the pressure expansion. In comparison to the compressible Navier--Stokes equations, the TT model \eqref{original tt model} includes additional terms. Specifically, there exists a linear term $\alpha v$, responsible for injecting energy into the system, alongside a cubic nonlinear term $-\beta v|v|^2$, inducing a damping effect that leads to steady-states. Furthermore, $(v\cdot\nabla)^2 v$ acts akin to diffusion in terms of linear stability analysis.	 
	 
In the present paper, we focus on the deterministic scenario of \eqref{original tt model}, implying the absence of noise ($f=0$), with the ordered state ($\alpha>0$) and $P(\rho)=\rho-\rho_0$. Noticing that the values of parameters $\lambda$, $D_1$, $D_2$, and $D_L$ do not influence our analysis, we assume, without loss of generality, that $\lambda=D_1=D_2=D_L=1$. Consequently, the TT model \eqref{original tt model} simplifies to:
\begin{equation}\label{original tt model1}
	\begin{split}
	&v_t+v\cdot\nabla v=\alpha v-\beta \vert v\vert^2v+\nabla{\rm{ div}} v+\Delta v-\nabla \rho+(v\cdot\nabla)^2v, \\
	&\rho_t+{\rm{div}} (v \rho)=0.
	\end{split}
	\end{equation}
  	
Our primary aim is to investigate the stability of a steady-state solution of \eqref{original tt model1}, denoted by $(\rho_s, v_s)$, where both $\rho_s$ and $v_s$ are nonzero constants, with 
\bq\label{steady}
\rho_s=c \in \R_+ \quad \mbox{and} \quad v_s=\sqrt{\frac\alpha\beta}\bf{e}, 
\eq
 where $\bf{e}\in \mathbb{S}^{d-1}$ with $d=2,3$.  Essentially, if initial data are close to the aforementioned nonzero constant state, we demonstrate that the solutions of \eqref{original tt model1} converge to this steady state as time tends to infinity. More precisely, our first main result is stated as follows:	 
 
   \begin{theorem}\label{main result} The steady-state solution $(\rho_s, v_s)$ of the TT model \eqref{original tt model1} given by \eqref{steady} is stable in $H^m(\R^d)$-norm for any $m \geq 3$. Moreover, it is almost polynomially stable in $(\dot{H}^{-l}\cap H^m)(\R^d) \times(\dot{H}^{-l}\cap H^m)(\R^d)$ for some $l(m,d)>0$, in the sense that solutions which are initially close to $(\rho_s, v_s)$ converge to that in a norm $\dot{H}^s(\R^d) \times \dot{H}^s(\R^d)$ for all $-l < s \leq m$ at least polynomially fast as time tends to infinity.
\end{theorem}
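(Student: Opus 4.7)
My plan is to linearize \eqref{original tt model1} around $(\rho_s,v_s)$ by setting $\rho=\rho_s+\sigma$ and $v=v_s+u$, and then run a perturbative energy argument coupled with negative Sobolev estimates. Using the steady-state relation $\alpha v_s-\beta|v_s|^2v_s=0$, the linear part of the perturbed system reads
\[
\sigma_t + v_s\cdot\nabla\sigma + \rho_s\,\mathrm{div}\,u = \mathcal{N}_0,
\]
\[
u_t + v_s\cdot\nabla u + 2\alpha(\mathbf{e}\cdot u)\mathbf{e} + \nabla\sigma - \Delta u - \nabla\,\mathrm{div}\,u - (v_s\cdot\nabla)^2 u = \mathcal{N}_1,
\]
where $\mathcal{N}_0,\mathcal{N}_1$ collect transport-, quadratic-, and cubic-type remainders. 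The visible dissipative ingredients are the anisotropic parabolic operator $-\Delta u-\nabla\,\mathrm{div}\,u-(v_s\cdot\nabla)^2u$ and the \emph{degenerate} damping $2\alpha(\mathbf{e}\cdot u)\mathbf{e}$, which acts only on the longitudinal component $u_\parallel:=(\mathbf{e}\cdot u)\mathbf{e}$; the density equation is purely hyperbolic.

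For the $H^m$-stability statement I would apply $\partial^k$ for each $|k|\leq m$, pair the mass equation with $\partial^k\sigma$ and the momentum equation with $\partial^k u$, integrate by parts and sum. Moser and Kato--Ponce commutator estimates together with the embedding $H^{m-1}\hookrightarrow L^\infty$ (justifying $m\geq 3$) allow absorption of the cubic $|v|^2v$, the transport $v\cdot\nabla v$ and the double convection $(v\cdot\nabla)^2v$, yielding
\[
\tfrac{d}{dt}\|(\sigma,u)\|_{H^m}^2 + c\bigl(\|\nabla u\|_{H^m}^2+\|u_\parallel\|_{H^m}^2\bigr) \lesssim \|(\sigma,u)\|_{H^m}\bigl(\|\nabla u\|_{H^m}^2+\|u_\parallel\|_{H^m}^2\bigr).
\]
To recover the dissipation for $\nabla\sigma$ missing from this naive estimate, I would add a Matsumura--Nishida-type cross term $\eta\sum_{|k|\leq m-1}\int\nabla\partial^k\sigma\cdot\partial^k u\,dx$ to the energy; using the momentum equation this contributes $\|\nabla\sigma\|_{H^{m-1}}^2$ to the total dissipation while consuming only a small fraction of the parabolic and longitudinal dissipations already in hand. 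Combined with a standard local well-posedness result and a continuation argument, this proves global-in-time existence and $H^m$-stability of $(\rho_s,v_s)$ for small initial perturbations.

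For the decay statement I would adopt a Guo--Wang-type strategy in $\dot{H}^{-l}$: apply $\Lambda^{-l}=|\nabla|^{-l}$ to the perturbation system and bound the nonlinearities via the Riesz product estimate $\|\Lambda^{-l}(fg)\|_{L^2}\lesssim\|f\|_{L^p}\|g\|_{L^2}$, whose admissible range via Hardy--Littlewood--Sobolev fixes the restriction $l=l(m,d)>0$. This yields a uniform bound $\|(\sigma,u)(t)\|_{\dot{H}^{-l}}\lesssim\|(\sigma_0,u_0)\|_{\dot{H}^{-l}}$, and the interpolation
\[
\|\Lambda^s f\|_{L^2}\leq \|\Lambda^{-l}f\|_{L^2}^{\theta}\,\|\Lambda^{s+1}f\|_{L^2}^{1-\theta},\qquad \theta=\tfrac{1}{s+1+l},
\]
combined with the dissipation inequality produces an ODE of the form $\tfrac{d}{dt}\mathcal{E}_s+C\mathcal{E}_s^{1+1/(s+l)}\leq 0$, giving the advertised polynomial decay of $\|(\sigma,u)(t)\|_{\dot{H}^s}$ for every $-l<s\leq m$.

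The principal obstacle is the joint presence of (i) a degenerate damping that sees only $u_\parallel$, (ii) an undissipated hyperbolic equation for $\sigma$, and (iii) the anisotropic operator $(v_s\cdot\nabla)^2u$, which provides diffusion only along $\mathbf{e}$. Closing the energy estimate therefore hinges on tuning the Matsumura--Nishida cross term so that its error is absorbed by the existing dissipations, on a longitudinal/transverse decomposition $u=u_\parallel+u_\perp$ to extract transverse dissipation from the isotropic part $-\Delta u-\nabla\,\mathrm{div}\,u$ uniformly in $\mathbf{e}$, and on the precise $\dot{H}^{-l}$ product estimates that pin down the admissible range of $l$.
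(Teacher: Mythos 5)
Your proposal follows essentially the same route as the paper: perturb around the ordered steady state so that the cubic term produces damping only on the longitudinal component $\bar u=\mathbf{e}\cdot u$, close $H^m$ energy estimates via Moser/Kato--Ponce commutator bounds, add exactly the hypocoercive Matsumura--Nishida cross term $\sum_{k\leq m-1}\int \partial^k u\cdot\partial^k\nabla\sigma\,dx$ to generate the missing $\|\nabla\sigma\|^2_{H^{m-1}}$ dissipation (the paper's hypocoercivity lemma), and derive polynomial decay by propagating a negative Sobolev $\dot H^{-l}$ bound \`a la Guo--Wang, interpolating, and solving the resulting differential inequality. Two minor caveats that do not alter the architecture: your displayed intermediate $H^m$ inequality is slightly too optimistic, since the density nonlinearities leave $\|\nabla\sigma\|^2_{H^{m-1}}$-type terms (with small prefactor) on the right and the cross term must also be run at the $\dot H^{-l}$ level to close the negative-norm estimate, and in the paper the admissible $l(m,d)$ is pinned down not only by Hardy--Littlewood--Sobolev ($l<d/2$) but by the interpolation exponent $\beta(d,m)$ needed to absorb the undamped transverse interaction terms such as $\|\bar u\|_{L^\infty}\|\partial^k u\|^2_{L^2}$ at top order.
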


 Following the initial introduction of the TT model, various derivatives have emerged. For instance, Gowrishankar et al. presented a model detailing the dynamics of self-propelled actin filaments and myosin, crucial for plasma membrane organization in two dimensions \cite{gowrishankar2012active}. Subsequently, Gowrishankar and Rao conducted numerical investigations into the collective motion of active filaments, as purportedly observed in experiments \cite{gowrishankar2016nonequilibrium}. The model proposed in \cite{gowrishankar2012active} is given by
 \begin{equation}\label{original simple model}
\begin{split}
&n_t+\lambda n\cdot\nabla n=\alpha n-\beta \vert n\vert^2n+K_1\nabla \ {\rm{div}} n+K_2\Delta n-\xi \nabla c+f,\\
&c_t+D\Delta c+\nu_0 \ {\rm{div}}\ (cn)=0.
\end{split}
\end{equation}
Here $c=c(x,t)$ represents the local concentration, $n=n(x,t)$ denotes the polarization vector at a given position, and $f$ signifies the athermal noise at position $x\in\mathbb{R}^d$ and time $t>0$. The parameters $\lambda$, $K_1$, $K_2$, $\xi$, $\alpha$, $\beta$, $\nu_0$, and $D_f$ are constants. Upon normalizing these parameters and modifying the notations, we rewrite the system \eqref{original simple model} as follows:
\begin{equation}\label{parabolic}
\begin{split}
&v_t+v\cdot\nabla v=\alpha v-\beta \vert v\vert^2v+\nabla{\rm{ div}} v+\Delta v-\nabla \rho, \\
&\rho_t+{\rm{div}}(v \rho)=\Delta \rho.
\end{split}
\end{equation}
We refer to \eqref{parabolic} as the Parabolic-Parabolic Toner--Tu (PPTT) model given the structural characteristic of the system. In contrast to the TT model \eqref{original tt model1}, the discrepancy in \eqref{parabolic} lies in the absence of $(v\cdot\nabla)^2v$ and the inclusion of diffusions for $\rho$, namely $\Delta\rho$.	
	
Our second main result is on the stability of steady-state solution $(\rho_s, v_s)$ of the system \eqref{parabolic}:
  \begin{theorem}\label{main simple} The steady-state solution $(\rho_s, v_s)$ of the PPTT model \eqref{parabolic} given by \eqref{steady} is stable in $H^m(\R^d)$-norm for any $m \geq 3$. Moreover, it is almost polynomially stable in $(\dot{H}^{-l}\cap H^m)(\R^d) \times (\dot{H}^{-l}\cap H^m)(\R^d)$ for some $l(m,d)>0$, in the same manner described in Theorem \ref{main result}.
  \end{theorem}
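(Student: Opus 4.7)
The plan is to adapt the strategy developed for Theorem \ref{main result} to accommodate the two structural differences between \eqref{parabolic} and \eqref{original tt model1}: the absence of the degenerate hyperdiffusion $(v\cdot\nabla)^2 v$ in the velocity equation, and the presence of the full Laplacian $\Delta\rho$ in the continuity equation. Setting $v=v_s+u$ and $\rho=\rho_s+\sigma$ and using the steady-state identity $\alpha=\beta|v_s|^2$, I would first derive the perturbation system
\begin{align*}
& u_t + v_s\cdot\nabla u + 2\beta(v_s\cdot u)v_s + \nabla\sigma - \nabla\,\mathrm{div}\,u - \Delta u = \sfM_1(u,\sigma),\\
& \sigma_t + v_s\cdot\nabla\sigma + \rho_s\,\mathrm{div}\,u - \Delta\sigma = \sfM_2(u,\sigma),
\end{align*}
where $\sfM_1,\sfM_2$ collect the quadratic and cubic nonlinearities coming respectively from $u\cdot\nabla u$ and the cubic damping residual, and from $\mathrm{div}(u\sigma)$. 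The key conceptual point is that, unlike in Theorem \ref{main result}, the $\sigma$-equation is now parabolic, so there is no need for a hyperbolic-to-parabolic transfer through the coupling $\nabla\sigma$ and $\mathrm{div}\,u$ to recover regularity for $\sigma$.

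For the $H^m$ well-posedness part, I would apply $\pa^k$ with $|k|\le m$, test the $u$-equation against $\pa^k u$ and the $\sigma$-equation against $\rho_s^{-1}\pa^k\sigma$, and integrate. The transport terms vanish by integration by parts, the cross-coupling terms $\nabla\sigma$ and $\rho_s\,\mathrm{div}\,u$ cancel once the density weight is included, and the remaining linear part produces the dissipation
\[
\md(t)\gs \|\nabla u\|_{H^m}^2 + \|v_s\cdot u\|_{H^m}^2 + \|\nabla\sigma\|_{H^m}^2.
\]
The nonlinear contributions, including the cubic residual $-\beta(|u|^2 u + 2(v_s\cdot u)u + |u|^2 v_s)$, are controlled by Moser-type commutator estimates together with the Sobolev embedding $H^m\hookrightarrow L^\infty$, which requires $m\ge 3$. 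This produces an inequality of the schematic form $\tfrac{d}{dt}\me(t)+\md(t)\ls \me(t)^{1/2}\md(t)$, which closes by continuation for initial data small in $H^m$.

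For the polynomial decay, I would establish a uniform $\dot{H}^{-l}$ bound by applying $\Lambda^{-l}$ to the two equations and testing against $\Lambda^{-l}u$ and $\rho_s^{-1}\Lambda^{-l}\sigma$, controlling the nonlinearities by the Hardy--Littlewood--Sobolev inequality (which constrains $0<l<d/2$). Combining this uniform bound with the $H^m$ energy estimate and the Gagliardo--Nirenberg-type interpolation for negative Sobolev norms used in Theorem \ref{main result} then yields the decay of $\|(u,\sigma)\|_{\dot{H}^s}$ at the stated polynomial rate for all $-l\le s\le m$. The principal obstacle is that the algebraic damping $-2\beta(v_s\cdot u)v_s$ only controls the projection of $u$ along $v_s$, so the transverse component must be dissipated solely through $\Delta u+\nabla\,\mathrm{div}\,u$; however, since \eqref{parabolic} supplies the extra dissipation $\Delta\sigma$, the overall structure is expected to be no harder --- and in places simpler --- than that of the TT model. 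The real delicate point is to verify that the removal of $(v\cdot\nabla)^2 v$ does not prevent absorption of the quadratic terms $u\cdot\nabla u$ and $\mathrm{div}(u\sigma)$ in the low-frequency $\dot{H}^{-l}$ estimate, where the hyperdissipation in Theorem \ref{main result} played a useful regularizing role.
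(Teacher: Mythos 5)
Your overall strategy coincides with the paper's: reformulate around the steady state, run $\partial^k$-energy estimates in which the only zeroth-order damping acts on $\bar u=v_s\cdot u$ while the transverse part of $u$ and all of $\eta$ are dissipated through $\Delta u+\nabla\mathrm{div}\,u$ and $\Delta\eta$, then prove a uniform $\dot H^{-l}$ bound in the style of Guo--Wang and interpolate to get polynomial decay. The global $H^m$ part of your argument is essentially sound: with smallness in the full $H^m$ norm, Moser estimates and $H^m\hookrightarrow L^\infty$ do close an inequality of the form $\frac{d}{dt}\mathcal{E}+\mathcal{D}\lesssim(\mathcal{E}^{1/2}+\cdots)\mathcal{D}$, and the cancellation of the linear coupling and transport terms is as you describe. (The paper proves a sharper version, Lemma \ref{priori for parabolic}, closing with smallness only in $H^{d-2}$ by replacing the crude $L^\infty$ bounds with the interpolation inequalities of Lemma \ref{lem_gd1}, but that refinement is not needed for the statement as phrased.)

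The genuine gap is in the decay step. To obtain the rate $(1+t)^{-(s+l)/2}$ one needs a \emph{top-order} differential inequality of the type $\frac{d}{dt}\|\partial^m(u,\eta)\|_{L^2}^2+c\,\|\partial^m\nabla(u,\eta)\|_{L^2}^2\leq 0$, and the obstruction is precisely the quadratic term $2\bar u u$ produced by expanding $-\beta|v|^2v$ about $v_s$: pairing $\partial^m(\bar u u)$ with $\partial^m u$ yields contributions of the form $\|u\|_{L^\infty}\|\partial^m u\|_{L^2}^2$ in which $\partial^m u$ carries no extra derivative and is not the damped component. On $\R^d$ this is \emph{not} dominated by $\|\partial^m\nabla u\|_{L^2}^2+\|\partial^m\bar u\|_{L^2}^2$, so smallness of $\|u\|_{L^\infty}$ alone does not absorb it; in the existence estimate it is absorbed only after summing over all derivative orders, which destroys the level-$m$ structure needed for the decay ODE. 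The paper's resolution is Lemma \ref{lem_gd1} (iv), i.e. $\|u\|_{L^\infty}\|\partial^m v\|_{L^2}^2\lesssim\|(u,v)\|_{\dot H^{\beta(d,m)}}(\|\partial^m u\|_{L^2}^2+\|\partial^{m+1}v\|_{L^2}^2)$ with the specific negative index $\beta(d,m)$ of \eqref{betadm}, implemented in Lemma \ref{decay estimate for parabolic}; this is exactly what forces the requirement $l\geq-\beta(d,m)$ and explains the ``for some $l(m,d)>0$'' in the statement, whereas your only constraint $0<l<d/2$ comes from HLS and misses it. Relatedly, the point you single out as delicate --- the loss of the hyperdissipation $(v\cdot\nabla)^2v$ in the low-frequency estimate --- is not an issue for the PPTT model: the $\dot H^{-l}$ bound closes by standard HLS/Sobolev product estimates using the parabolic dissipation in both equations (Lemma \ref{lem_ns_pptt}); the real difficulty is the absorption just described.
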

  
 \begin{remark} When $d=m=3$, the polynomial stability of the steady-state solution $(\rho_s, v_s)$ of the TT model \eqref{original tt model1} and PPTT model \eqref{parabolic} can be obtained in $H^3$, without introducing the negative Sobolev space, see Remark \ref{rmk_gd} for details.  
\end{remark}

To the best of our knowledge, there exists limited literature concerning the analytical study of the TT model. Specifically, for the incompressible TT model, only two research papers on the existence theory are available. Gibbon et al.  \cite{gibbon2023analytical} established the existence of Leray-type weak solutions for the three-dimensional case and global regularity for the two-dimensional case. More recently, Bae et al. \cite{bck} investigated the global existence and asymptotic stability of the regular solutions. However, the analytical analysis of the global-in-time existence of stability for the TT model \eqref{original tt model1} or PPTT model \eqref{parabolic} has not been explored yet. In this paper, for the first time, we established the global-in-time stability of solutions analytically. We would also like to emphasize that we prove stronger results than those stated in Theorems \ref{main result} and \ref{main simple}. Precisely, we do not impose the smallness assumptions on the initial data within the full Sobolev space $H^m(\R^d)$. Instead, we only need the smallness assumptions on $H^3$ and $H^{d-2}$ for the stability results concerning the TT model \eqref{original tt model1} and PPTT model \eqref{parabolic}, see Theorems \ref{simple result} and \ref{tt result} below.

\subsubsection*{Notations} All generic positive constants are denoted by $C$ which is independent of $t$, and $C(p_1, p_2, \dots )$ implies that a constant depends on $p_1, p_2, \dots$.  $f \ls g$ represents that there exists a positive constant $C>0$ such that $f \leq C g$. For simplicity, we denote $\int_{\mathbb{R}^d}\,dx$, $L^p(\R^d)$, and $H^m(\R^d)$ as  $\int \,dx$, $L^p$, and $H^m$, respectively.  We also denote $\|(u,\eta)\|^2_X$ as $\|u\|^2_X+\|\eta\|^2_X$, where $\|\cdot\|_X$ is the norm of the function space $X$. $[\cdot, \cdot]$ stands for the commutator operator, i.e. $[A,B] = AB - BA$. Finally, $\partial^k$ represents derivatives of order $k$ and $\Lambda^s$ stands for an operator whose Fourier multiplier is given by $|\xi|^s$

\subsubsection*{Outline of the paper} In the rest of this paper is organized as follows. In Section \ref{pre}, we furnish useful Sobolev inequalities and briefly outline the local well-posedness theory for the systems \eqref{original tt model1} and \eqref{parabolic} near the steady state $(\rho_s,  v_s)$. Section \ref{section simple} and Section \ref{globals tt} are dedicated to elucidating the proofs of Theorem \ref{main simple} and Theorem \ref{main result}, respectively.	
%
%
%
%
%
%
%
\section{Preliminaries}\label{pre}

%
%
%
%
%
%
%
\subsection{Useful inequalities}
In this subsection, we present technical lemmas that will be used to obtain sharp energy estimates of solutions later.

We first recall well-known inequalities whose proofs can be found in \cite{BCD11, grafakos2014modern, li2019kato}.
\begin{lemma}\label{big lemma} Let $s,s_1,s_2\in\mathbb{R}$, and $k\in\mathbb{N}$. Then the following holds.
 \begin{itemize}
 	\item[{\rm (i)}]  Let $u\in\dot{H}^{s_1}\cap\dot{H}^{s_2}$, then 
\[
	\|\partial^k u\|_{L^\infty}\ls \|u\|^{\frac{s_2-\left(k+\frac{d}{2}\right)}{s_2-s_1}} _{\dot{H}^{s_1}}\|u\|^{\frac{\left(k+\frac{d}{2}\right)-s_1}{s_2-s_1}}_{\dot{H}^{s_2}} \quad \text{for }  s_1<k+\frac{d}{2}<s_2.
\]
    \item[{\rm (ii)}]   Let $u\in\dot{H}^{s_1}\cap\dot{H}^{s_2}$, then
\[
	\| u\|_{\dot{H}^s}\leq \|u\|^{\frac{s_2-s}{s_2-s_1}} _{\dot{H}^{s_1}}\|u\|^{\frac{s-s_1}{s_2-s_1}}_{\dot{H}^{s_2}} \quad \text{for $s_1<s<s_2$.}
\]
\item[{\rm (iii)}]   Let $u,v\in L^\infty\cap\dot{H}^{s}$ with $s\in(0,\infty)$, then 
 	\begin{equation*}
	\|uv\|_{\dot{H}^s}\ls \|u\|_{L^\infty}\|v\|_{\dot{H}^s}+\|v\|_{L^\infty}\|u\|_{\dot{H}^s}. 
\end{equation*}
    \item[{\rm (iv)}]  Let $1<p<\infty$ with $1/p=1/p_1+1/q_1=1/p_2+1/q_2$, and $q_1,q_2\in (1,\infty)$, then
\[
	\|[\partial^k,u\cdot\nabla]v\|_{L^p}\ls \|\nabla u\|_{L^{p_1}}\|\partial^kv\|_{L^{q_1}}+\|\nabla v\|_{L^{p_2}}\|\partial^k u\|_{L^{q_2}}.
\]
   \item[{\rm (v)}]  Let $s\in (0,d)$, $1<p<q<\infty$ and $1/q+s/d=1/p$, then we have 
\[
	\|\Lambda^{-s} u\|_{L^q} \ls \|u\|_{L^p}.
\]
   \item[{\rm (vi)}]  Let $u\in \dot{H}^{\frac{d}{2}-s}$, then
\[
	\|u\|_{L^\frac{d}{s}}\ls \|\Lambda^{\frac{d}{2}-s} u\|_{L^2} \quad  \text{for } 0 <s<\frac{d}{2}.
\]
 \end{itemize}
\end{lemma}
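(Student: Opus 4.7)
The plan is to treat this lemma as a curated toolbox of classical harmonic analysis inequalities, each of which admits a short independent proof based on one of three pillars: Littlewood--Paley/Bernstein estimates, Plancherel together with H\"older, and Bony's paraproduct decomposition. I would first set up a dyadic decomposition $u=\sum_{j\in\mathbb Z}\Delta_j u$ with the standard Littlewood--Paley projectors $\Delta_j$, and recall Bernstein's inequality $\|\partial^k \Delta_j u\|_{L^\infty}\lesssim 2^{j(k+d/2)}\|\Delta_j u\|_{L^2}$. These are the only background facts I need, together with the Riesz potential representation $\Lambda^{-s}u = c_{d,s}\,|x|^{-(d-s)}\ast u$.

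For (i), I would split the sum $\partial^k u=\sum_{j\le N}\partial^k\Delta_j u + \sum_{j>N}\partial^k\Delta_j u$ at a threshold $N$ to be optimized. Bernstein bounds the low-frequency piece by $\sum_{j\le N}2^{j(k+d/2-s_1)}\|\Lambda^{s_1}\Delta_j u\|_{L^2}\lesssim 2^{N(k+d/2-s_1)}\|u\|_{\dot H^{s_1}}$ (valid since $k+d/2-s_1>0$) and the high-frequency piece by $2^{N(k+d/2-s_2)}\|u\|_{\dot H^{s_2}}$ (valid since $k+d/2-s_2<0$); choosing $N$ to balance these two gives the stated exponents. Item (ii) is even shorter: by Plancherel $\|u\|_{\dot H^s}^2=\int |\xi|^{2s}|\hat u|^2\,d\xi$, and H\"older's inequality applied to the factorization $|\xi|^{2s}=|\xi|^{2s_1\theta}|\xi|^{2s_2(1-\theta)}$ with $\theta=(s_2-s)/(s_2-s_1)$ yields the interpolation bound directly.

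For the product estimate (iii) and commutator estimate (iv), I would invoke Bony's decomposition $uv=T_u v+T_v u+R(u,v)$ into paraproducts and the remainder. For (iii), each paraproduct $T_u v=\sum_j S_{j-1}u\,\Delta_j v$ is bounded in $\dot H^s$ by $\|u\|_{L^\infty}\|v\|_{\dot H^s}$ via Bernstein and almost-orthogonality of the $\Delta_j v$, and the symmetric piece gives $\|v\|_{L^\infty}\|u\|_{\dot H^s}$; the remainder $R(u,v)$ requires $s>0$ to sum and again produces one of the two mixed norms. The commutator estimate (iv) is a discretized Coifman--Meyer--Kato--Ponce argument: write $[\partial^k,u\cdot\nabla]v$ frequency-localized, exploit the cancellation between $\partial^k(u\cdot\nabla v)$ and $u\cdot\nabla\partial^k v$ at each dyadic scale so that one derivative is effectively peeled off $u$, and then close via H\"older with the stated $(p_1,q_1)$ and $(p_2,q_2)$ pairings.

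Finally, (v) is the Hardy--Littlewood--Sobolev inequality for the Riesz potential $\Lambda^{-s}$: writing $\Lambda^{-s}u$ as convolution with $|x|^{-(d-s)}$, the result follows from Marcinkiewicz interpolation applied to the weak-type $(1,d/(d-s))$ bound plus the trivial strong $(\infty,\infty)$ endpoint, combined with the scaling relation $1/q+s/d=1/p$. Item (vi) is then an immediate consequence of (v): apply (v) with $p=2$ and exponent $d/2-s$ in place of $s$ to the function $\Lambda^{d/2-s}u$, which gives $\|u\|_{L^{d/s}}=\|\Lambda^{-(d/2-s)}\Lambda^{d/2-s}u\|_{L^{d/s}}\lesssim\|\Lambda^{d/2-s}u\|_{L^2}$, with $0<s<d/2$ exactly ensuring $0<d/2-s<d$ so that (v) applies. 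The main technical obstacle is the commutator bound (iv), since the cancellation needed to keep only $\nabla u$ (rather than $\partial^k u$ in the first term) requires a careful paraproduct bookkeeping; the remaining items reduce to bookkeeping of Bernstein exponents or a direct application of HLS.
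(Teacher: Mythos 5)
Your proposal is correct, but note that the paper offers no proof of this lemma at all: it states these as ``well-known inequalities whose proofs can be found in'' the cited references (Bahouri--Chemin--Danchin, Grafakos, and Li's Kato--Ponce survey). Your sketches reconstruct exactly the standard arguments from those sources --- Littlewood--Paley/Bernstein with an optimized frequency cutoff for the Agmon-type bound (i), Plancherel plus H\"older for (ii), Bony's decomposition for the product and commutator estimates (iii)--(iv), and Hardy--Littlewood--Sobolev for (v) with (vi) as its $p=2$ consequence --- and the exponent bookkeeping you indicate (e.g.\ $1/q+(d/2-s)/d=1/2$ in (vi), and applying the standard commutator bound to the pair $(u,\nabla v)$ to obtain the stated form of (iv)) all checks out.
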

 
We next provide several auxiliary estimates.

\begin{lemma}\label{lem_gd1}
 Let $k \in \N$ and $d=2$ or $d=3$. Then the following holds.
 \begin{itemize}
\item[{\rm (i)}] If $u, v, w \in H^{k+1}$ for $k>1$, then 
\[
	\|\nabla u\|_{L^\infty}\|\partial^kv\|_{L^2}\|\partial^kw\|_{L^2}  \ls (\| u\|_{H^\frac{(d-2)(k+1)}{2(k-1)}}+\|(v,w)\|_{L^2} ) \|\partial^{k+1} (u,v,w)\|^2_{L^2}.    
\]
\item[{\rm (ii)}] If $u,v \in H^{k+1}$ for $k\geq1$, then
\[
\begin{split}
	\| u\|_{L^\infty}\|\partial^kv\|_{L^2}& \ls (\| u\|_{H^\frac{(d-2)(k+1)}{2k}}+\|v\|_{L^2}) \|\partial^{k+1} (u,v)\|_{L^2}.
\end{split}
\]
\item[{\rm (iii)}] If $u,v \in H^{k+2}$ for $k\geq1$, then
\[
\begin{split}
	\| \nabla u\|_{L^\infty}\|\partial^kv\|_{L^2}& \ls (\| u\|_{H^\frac{(d-2)(k+2)}{2k}}+\|v\|_{L^2}) \|\partial^{k+2} (u,v)\|_{L^2}.
\end{split}
\]
\item[{\rm (iv)}] if $u \in \dot{H}^{\beta(d,k)} \cap \dot H^k$ and $v \in \dot{H}^{\beta(d,k)} \cap \dot H^{k+1}$ for $k>1$, then
\[
\begin{split}
	\| u\|_{L^\infty}\|\partial^kv\|^2_{L^2}& \ls \| (u,v)\|_{\dot{H}^{\beta(d,k)}}(\|\partial^k u\|^2_{L^2}+\|\partial^{k+1} v\|^2_{L^2}),
\end{split}
\]
where 
\[
\beta(d,k):=\frac{\left(\frac{d}{2}+k-1\right)-\sqrt{\left(\frac{d}{2}+k-1\right)^2+8k-2dk-2d}}{2}.
\]
\item[{\rm (v)}] If $u,v \in H^3 \cap H^{k+1}$ for $k>1$, then 
\[
	\|\partial^2 u\|_{L^{2d}}\|\partial^kv\|^2_{L^2}\ls  \|(u,v)\|_{H^3}  \|\partial^{k+1} (u,v)\|^2_{L^2}.
\]
\item[{\rm (vi)}] Let $d=3$. If $u,v,w \in H^3$, then 
\[
	\begin{split}
		\|\nabla u\|_{L^\infty}\|\partial^2v\|_{L^2}\|\partial^2w\|_{L^2}
	& \ls \|( u,v,w)\|_{H^1} \|\partial^3 (u,v,w)\|^2_{L^2}.
	\end{split}
\]
\end{itemize}    
\end{lemma}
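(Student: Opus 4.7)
The plan is to handle each of (i)--(vi) by the same two-step recipe: first apply the interpolation inequalities from Lemma \ref{big lemma}(i) and (ii) to each factor on the left-hand side so as to express it as a product of a low-order Sobolev norm and the top-order $L^2$-based norm (of order $k+1$ or $k+2$, depending on the part); then apply weighted Young's inequality $x^p y^q \leq p\,x + q\,y$ (with $p+q=1$) to re-assemble the resulting monomials into the claimed form.

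For (i), the critical choice is $s_1=(d-2)(k+1)/(2(k-1))$, which makes Lemma \ref{big lemma}(i) with $s_2=k+1$ yield $\|\nabla u\|_{L^\infty}\ls \|u\|_{\dot{H}^{s_1}}^{(k-1)/(k+1)}\|u\|_{\dot{H}^{k+1}}^{2/(k+1)}$; complementing this with Lemma \ref{big lemma}(ii) applied to $\|\partial^k v\|_{L^2}$ and $\|\partial^k w\|_{L^2}$ (interpolating between $L^2$ and $\dot{H}^{k+1}$) produces a product whose low-order exponents sum to $1$ and whose top-order exponents sum to $2$, so two applications of weighted AM-GM deliver the claimed bound. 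Parts (ii) and (iii) are the same, with $s_1=(d-2)(k+1)/(2k)$ (resp.\ $(d-2)(k+2)/(2k)$) chosen so that the low-order exponent of the $L^\infty$-factor complements the $1/(k+1)$ (resp.\ $1/(k+2)$) coming from the $\|\partial^k v\|_{L^2}$-factor.

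For (iv), the value $\beta(d,k)$ is defined precisely so that the exponent balance closes: interpolating $\|u\|_{L^\infty}$ between $\dot{H}^{\beta}$ and $\dot{H}^{k}$ and $\|\partial^k v\|_{L^2}$ between $\dot{H}^{\beta}$ and $\dot{H}^{k+1}$, the total $\dot{H}^{\beta}$-exponent of the product $\|u\|_{L^\infty}\|\partial^k v\|_{L^2}^2$ equals $(k-d/2)/(k-\beta)+2/(k+1-\beta)$; forcing this to equal $1$ gives, after substituting $\mu=k-\beta$, the quadratic $\mu^2-(k+1-d/2)\mu-(k-d/2)=0$, whose unique positive root yields the stated formula for $\beta(d,k)$. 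For (v), one first uses the Sobolev embedding (a consequence of Lemma \ref{big lemma}(v)) to bound $\|\partial^2 u\|_{L^{2d}}\ls \|u\|_{\dot{H}^{(d+3)/2}}$, which in $d=3$ reduces to $\|u\|_{\dot{H}^{3}}$ but in $d=2$ requires the further interpolation $\|u\|_{\dot{H}^{5/2}}\ls \|u\|_{\dot{H}^{2}}^{1/2}\|u\|_{\dot{H}^{3}}^{1/2}$; combining this with the interpolation of $\|\partial^k v\|_{L^2}^2$ between $L^2$ and $\dot{H}^{k+1}$ and then applying weighted AM-GM to a convex combination of admissible monomials of the form $(\text{low})\cdot(\text{top})^2$ closes the estimate. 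Part (vi) is the $d=3$ sharpening of (i): taking $s_1=1$ for $u$ and $s_1=1/3$ for $v,w$ (both controlled by $H^1$) balances the exponents and produces the $H^1$-type lower norm on the right-hand side.

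The main obstacle is the exponent bookkeeping in (iv) and (v). In (iv) one must derive and justify the algebraic form of $\beta(d,k)$ and verify the admissibility conditions $\beta<d/2<k$ and $\beta<k<k+1$ demanded by Lemma \ref{big lemma}(i)--(ii); in particular $\beta$ may become negative (so $\dot{H}^{\beta}$ is a negative Sobolev space), which is why part (iv) is the natural building block for the later ``almost polynomial'' estimates. In (v), the $d=2$ case is the delicate one: the naive two-term Young's inequality fails to balance, so one must identify a weighted geometric mean of several model monomials of type $(\text{low})(\text{top})^2$ that dominates the product. This is the only place where something beyond routine interpolation-plus-Young's is required.
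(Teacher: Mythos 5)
You handle (i)--(iv) and (vi) exactly as the paper does: Lemma \ref{big lemma}(i) for the $L^\infty$ factors, Lemma \ref{big lemma}(ii) for the $L^2$ factors, and a weighted Young/geometric-mean step once the low-order exponents sum to $1$ and the top-order exponents to $2$ (or to $1$ in (ii)--(iii)). Your derivation of $\beta(d,k)$ from the balance condition $\frac{k-d/2}{k-\beta}+\frac{2}{k+1-\beta}=1$ via the quadratic in $\mu=k-\beta$ reproduces the stated formula, and your choice $s_1=1$ for $u$, $s_1=1/3$ for $v,w$ in (vi) balances just as well as the paper's $(0,\,3/5,\,3/5)$. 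These parts are fine.

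Part (v), however, has a genuine gap as described. After the crude bound $\|\partial^2 u\|_{L^{2d}}\ls\|u\|_{\dot H^{(d+3)/2}}$ and the interpolation of $\|\partial^k v\|_{L^2}^2$ alone, you are left with $\|u\|_{\dot H^{(d+3)/2}}\,\|v\|_{L^2}^{2/(k+1)}\|\partial^{k+1}v\|_{L^2}^{2k/(k+1)}$, whose total power of top-order norms is $2k/(k+1)<2$, whereas every monomial of the form $(\text{low})\cdot(\text{top})^2$ --- and hence every weighted geometric mean of such monomials --- carries top-order power exactly $2$; since low-order norms do not control $\|\partial^{k+1}(\cdot)\|_{L^2}$, no AM--GM step can convert your product into $\|(u,v)\|_{H^3}\|\partial^{k+1}(u,v)\|_{L^2}^2$, and this imbalance is present in $d=3$ as well, not only in $d=2$ (the extra interpolation $\|u\|_{\dot H^{5/2}}\ls\|u\|_{\dot H^{2}}^{1/2}\|u\|_{\dot H^{3}}^{1/2}$ does not help, since both factors remain low-order). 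The missing idea is to split the $u$-factor against the top order as the paper does: $\|\partial^2 u\|_{L^{2d}}\ls\|\Lambda^{\frac{(d-1)(k+1)}{2(k-1)}}u\|_{L^2}^{\frac{k-1}{k+1}}\|\partial^{k+1}u\|_{L^2}^{\frac{2}{k+1}}$, where $\frac{(d-1)(k+1)}{2(k-1)}\leq 3$ for $k\geq2$, $d\leq3$, so the low powers sum to $1$, the top powers to $2$, and the estimate closes exactly as in (i). (Your route could instead be rescued by a case distinction: if $\|v\|_{L^2}\leq\|\partial^{k+1}v\|_{L^2}$ the claim is immediate, and otherwise one interpolates $\|u\|_{\dot H^{(d+3)/2}}\ls\|u\|_{L^2}^{1-\theta}\|\partial^{k+1}u\|_{L^2}^{\theta}$ and trades one factor of $\|\partial^{k+1}v\|_{L^2}^{1/(k+1)}$ for $\|v\|_{L^2}^{1/(k+1)}$ --- but some such additional step is needed; the balancing as you wrote it fails.)
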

\begin{proof}The proofs reply on the Agmon-type inequality, Lemma \ref{big lemma} (i) and the Sobolev interpolation inequality, Lemma \ref{big lemma} (ii). Noticing the condition on $k$, straightforward computations yield 
\begin{equation*}	
\begin{split}
 \|\nabla u\|_{L^\infty}\|\partial^kv\|_{L^2}\|\partial^kw\|_{L^2}&\ls \|\Lambda^\frac{(d-2)(k+1)}{2(k-1)} u\|^{\frac{k-1}{k+1}}_{L^2}\|\partial^{k+1} u\|^\frac{2}{k+1}_{L^2}\|v\|^\frac{1}{k+1}_{L^2}\|\partial^{k+1}v\|^\frac{k}{k+1}_{L^2}\|w\|^\frac{1}{k+1}_{L^2}\|\partial^{k+1}w\|^\frac{k}{k+1}_{L^2}\\
	& \ls(\| u\|_{H^\frac{(d-2)(k+1)}{2(k-1)}}+\|(v,w)\|_{L^2} ) \|\partial^{k+1} (u,v,w)\|^2_{L^2}, \cr
	\| u\|_{L^\infty}\|\partial^kv\|_{L^2} &\ls \|\Lambda^\frac{(d-2)(k+1)}{2k} u\|^{\frac{k}{k+1}}_{L^2}\|\partial^{k+1} u\|^\frac{1}{k+1}_{L^2}\|v\|^\frac{1}{k+1}_{L^2}\|\partial^{k+1}v\|^\frac{k}{k+1}_{L^2}\\
	& \ls (\| u\|_{H^\frac{(d-2)(k+1)}{2k}}+\|v\|_{L^2}) \|\partial^{k+1} (u,v)\|_{L^2} , \cr
		\|\nabla u\|_{L^\infty}\|\partial^kv\|_{L^2}&\ls \|\Lambda^\frac{(d-2)(k+2)}{2k} u\|^{\frac{k}{k+2}}_{L^2}\|\partial^{k+2} u\|^\frac{2}{k+2}_{L^2}\|v\|^\frac{2}{k+2}_{L^2}\|\partial^{k+2}v\|^\frac{k}{k+2}_{L^2}\\
	&\ls (\| u\|_{H^\frac{(d-2)(k+2)}{2k}}+\|v\|_{L^2})\|\partial^{k+2} (u,v)\|_{L^2},\cr
	\| u\|_{L^\infty}\|\partial^kv\|^2_{L^2} &\ls \|\Lambda^{\beta(d,k)} u\|^{\frac{k-\frac{d}{2}}{k-\beta(d,k)}}_{L^2}\|\partial^k u\|^\frac{\frac{d}{2}-\beta(d,k)}{k-\beta(d,k)}_{L^2}\|\Lambda^{\beta(d,k)}v\|^\frac{2}{k+1-\beta(d,k)}_{L^2}\|\partial^{k+1}v\|^\frac{2(k-\beta(d,k))}{k+1-\beta(d,k)}_{L^2}\\
	& \ls \| (u,v)\|_{\dot{H}^{\beta(d,k)}}(\|\partial^k u\|^2_{L^2}+\|\partial^{k+1} v\|^2_{L^2}), \cr
\|\partial^2 u\|_{L^{2d}}\|\partial^kv\|^2_{L^2}
	&\ls \|\Lambda^\frac{(d-1)(k+1)}{2(k-1)} u\|^{\frac{k-1}{k+1}}_{L^2}\|\partial^{k+1} u\|^\frac{2}{k+1}_{L^2}\|v\|^\frac{2}{k+1}_{L^2}\|\partial^{k+1}v\|^\frac{2k}{k+1}_{L^2}\\
	& \ls (\| u\|_{H^\frac{(d-1)(k+1)}{2(k-1)}}+\|v\|_{L^2})  \|\partial^{k+1} (u,v)\|^2_{L^2} \cr
	&\ls  \|(u,v)\|_{H^3}  \|\partial^{k+1} (u,v)\|^2_{L^2}, 
\end{split}
\end{equation*}
and
\begin{equation*}
	\begin{split}
		\|\nabla u\|_{L^\infty}\|\partial^2v\|_{L^2}\|\partial^2w\|_{L^2}&\ls \| u\|^{\frac{1}{6}}_{L^2}\|\partial^3 u\|^\frac{5}{6}_{L^2}\|v\|^\frac{5}{12}_{H^1}\|\partial^3v\|^\frac{7}{12}_{L^2}\|w\|^\frac{5}{12}_{H^1}\|\partial^3w\|^\frac{7}{12}_{L^2} \ls \|( u,v,w)\|_{H^1}\|\partial^3 (u,v,w)\|^2_{L^2}.
	\end{split}
\end{equation*}
This completes the proof.
\end{proof}

%
%
%
%
%
%
%
\subsection{Reformulation}\label{sec_refo}
In order to study the stability of steady state $(\rho_s, v_s)$, we seek solutions $\rho$ and $v$ of the form $\rho=\eta+\rho_s$ and $v = u+v_s$. For simplicity, we fix $\alpha=\beta=1$.
Given that our analysis remains invariant under the magnitude of $\rho_s$ and the rotation of $v_s$, we set $\rho_s=1$ and $v_s={\rm e}_1$. Consequently,  the system \eqref{original tt model1} can be reformulated as follows:  
	\begin{equation}\label{original}
		\begin{split}
			&u_t+u\cdot\nabla u-\nabla {\rm div}\ u-\Delta u+\nabla \eta=-2\bar{u}{{\rm e}_1}-\vert u\vert^2u-{{\rm e}_1}|u|^2-2\bar{u}u-{{\rm e}_1}\cdot\nabla u\\
			&\hspace{5.5cm} +(u+{{\rm e}_1})\cdot\nabla ((u+{{\rm e}_1})\cdot \nabla u),\\
	&\eta_t=-\nabla \cdot (\eta u)-{{\rm e}_1}\cdot \nabla \eta -\nabla\cdot u, \quad \bar u={{\rm e}_1}\cdot u
	\end{split}
	\end{equation}
	with initial data
\[
(u,\eta)(x,0) = (u_0,\eta_0)(x), \quad x \in \R^d.
\]

Similarly, we also consider the pair $(\eta,u)$ satisfying the following perturbed PPTT equations: 
\begin{equation}\label{simple model}
\begin{split}
&u_t+u\cdot\nabla u-\nabla {\rm div} u-\Delta u+\nabla \rho=-2\bar{u}{{\rm e}_1}-\vert u\vert^2u-|u|^2{\rm e_1}-2\bar{u}u-{{\rm e}_1}\cdot\nabla u,\\
&\eta_t-\Delta \eta=-\nabla \cdot (\eta u)-{{\rm e}_1}\cdot \nabla \eta -\nabla\cdot u.
\end{split}
\end{equation}

%
%
%
%
%
%
%

\subsection{Local well-posedness}\label{local}

In this part, we present the local-in-time existence and uniqueness of regular solutions to the systems \eqref{original} and \eqref{simple model}. Since the local existence theory for similar hydrodynamic models has been well-developed in the $H^s$ Sobolev space, we state the theorem below without providing its detailed proof. We refer to \cite{danchin2015fourier,Maj03, majda2002vorticity} and references therein for the readers who are interested in it.

\begin{theorem}\label{thm_lwp} Let $m \geq 3$ and $d=2$ or $d=3$. Suppose that $(u_0, \eta_0) \in H^m \times H^m$. Then, for any positive constants $N < M$, there exists a positive cosntant $T_0$ depending only on $N$ and $M$ such that if $\|(u_0, \eta_0)\|_{H^m} \leq N$, then the systems \eqref{original} and \eqref{simple model} admits a unique solution $(u,\eta) \in C([0,T_0];H^m) \times C([0,T_0];H^m)$ satisfying
\[
\sup_{0 \leq t \leq T_0} \|(u,\eta)(t)\|_{H^m} \leq M.
\]
\end{theorem}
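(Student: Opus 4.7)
The plan is to implement a standard Picard-type iteration adapted to the quasilinear structure of \eqref{original} and the fully parabolic structure of \eqref{simple model}, since this theorem merely records a well-known local existence result for such coupled systems. I would initialize with $(u^0,\eta^0) \equiv (0,0)$ and, given $(u^n,\eta^n)$, define $(u^{n+1},\eta^{n+1})$ as the unique solution of the linear problem obtained by freezing all nonlinear coefficients at the previous iterate. For \eqref{original}, the velocity equation becomes a second-order linear equation with principal part $-\Delta - \nabla\mathrm{div}\, - \bigl((u^n+{\rm e}_1)\cdot\nabla\bigr)^2$; its matrix symbol is $|\xi|^2 I_d + \xi\otimes\xi + \bigl((u^n+{\rm e}_1)\cdot\xi\bigr)^2 I_d$, which is bounded below by $|\xi|^2 I_d$ irrespective of $u^n$, so uniform ellipticity is automatic. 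The density equation is a linear transport equation solvable along characteristics. For \eqref{simple model}, both equations become linear parabolic and can be handled by standard semigroup or energy methods.

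To obtain uniform $H^m$ bounds for the iterates on an interval $[0,T_0]$ with $T_0 = T_0(N,M)$, I would apply $\partial^k$ for $0 \leq k \leq m$ to the linearized system, test against $(\partial^k u^{n+1}, \partial^k \eta^{n+1})$, and integrate by parts. The commutator estimate in Lemma \ref{big lemma}(iv), the product estimate in Lemma \ref{big lemma}(iii), and the embedding $H^m \hookrightarrow L^\infty$ (available because $m \geq 3 > d/2$) allow me to bound every nonlinear contribution by
\[
C\bigl(\|(u^n,\eta^n)\|_{H^m}\bigr)\,\|(u^{n+1},\eta^{n+1})\|_{H^m}^2 + \tfrac{1}{2}\|\nabla(u^{n+1},\eta^{n+1})\|_{H^m}^2,
\]
where the last piece is absorbed by the dissipation (fully, for \eqref{simple model}; through the velocity dissipation only, plus a Grönwall step on the density, for \eqref{original}). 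A Grönwall inequality then shows that if $\|(u_0,\eta_0)\|_{H^m} \leq N$, the iterates remain bounded by $M$ on some $[0,T_0]$ whose length depends only on $N$ and $M$.

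Convergence of $\{(u^n,\eta^n)\}$ is obtained by estimating the differences $\delta u^n := u^{n+1}-u^n$ and $\delta \eta^n := \eta^{n+1}-\eta^n$ in a lower norm. Subtracting two consecutive linearized systems produces a system in which every right-hand side is a product of a difference with a uniformly $H^m$-bounded quantity, yielding an $L^2$ energy estimate that makes the sequence Cauchy in $C([0,T_0]; L^2)$. Interpolating against the uniform $H^m$ bound upgrades convergence to $C([0,T_0]; H^{m'})$ for any $m' < m$, which suffices to pass to the limit in all nonlinear terms and produce a solution in $L^\infty(0,T_0;H^m)$; continuity in the strong $H^m$ topology is then recovered by a Bona--Smith regularization. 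Uniqueness follows from the same $L^2$-difference estimate applied to two candidate solutions in the regularity class, combined with Grönwall.

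The main obstacle, in my view, is the genuinely quasilinear second-order term $(u+{\rm e}_1)\cdot\nabla\bigl((u+{\rm e}_1)\cdot\nabla u\bigr)$ in \eqref{original}, which rules out a purely semilinear contraction and forces the frozen-coefficient iteration above. Although uniform ellipticity of the combined principal operator is free from the Laplacian alone, closing the $H^m$ estimate still requires controlling the $u^n$-dependent coefficients in $L^\infty$ via the embedding $H^m \hookrightarrow L^\infty$, and handling the hyperbolic density equation in \eqref{original} requires that the transport velocity ${\rm e}_1 + u^n$ be $W^{1,\infty}$, which again reduces to the same Sobolev bound. These are precisely the places where the smallness of the time interval $T_0$ is used to keep the iterates within the ball of radius $M$.
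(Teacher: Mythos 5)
The paper does not actually prove Theorem \ref{thm_lwp}: it states the result and refers to \cite{danchin2015fourier,Maj03,majda2002vorticity}, so there is no in-house argument to compare against line by line. Your frozen-coefficient Picard scheme is essentially the standard construction those references implement, and it is consistent with the structure the paper exploits later: in particular, your observation that the quasilinear term $(u+{\rm e}_1)\cdot\nabla((u+{\rm e}_1)\cdot\nabla u)$ contributes a nonnegative quantity (symbol $((u^n+{\rm e}_1)\cdot\xi)^2$, or after integration by parts the good terms $-\|u\cdot\nabla\partial^k u\|_{L^2}^2$ and $-\|{\rm e}_1\cdot\nabla\partial^k u\|_{L^2}^2$) is exactly the mechanism used in the a priori estimates of Lemma \ref{global tt}, so your uniform-ellipticity claim is sound. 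Two points deserve slightly more care than your sketch gives them. First, for \eqref{original} the $H^m$ estimate of the transported density $\eta^{n+1}$ involves the source $\nabla\cdot u$ at order $m$, i.e.\ $m+1$ derivatives of the velocity; this is fine, but only because the velocity equation supplies $u\in L^2(0,T_0;H^{m+1})$ (as the paper notes in the remark after the theorem), and the Gr\"onwall step must use this dissipation norm rather than the $C_tH^m$ bound alone. Second, in the contraction step the density-difference equation contains the term $\eta^{n}\,\nabla\cdot\delta u^{n-1}$, which is not controlled by $\|\delta u^{n-1}\|_{L^2}$; you must take the contraction norm to be $C([0,T_0];L^2)\cap L^2(0,T_0;H^1)$ for the velocity differences (using the parabolic smoothing of the $u$-equation to absorb that derivative) and $C([0,T_0];L^2)$ for the density differences. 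With these refinements your argument closes, and the interpolation plus Bona--Smith step for strong $H^m$ continuity is the standard way to finish for the hyperbolic density component of \eqref{original}; for \eqref{simple model} everything is parabolic and simpler, as you say.
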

\begin{remark}Due to the structures of the systems, we additionally have $u \in L^2(0,T;H^{m+1})$ and $u, \eta \in L^2(0,T;H^{m+1})$ for the systems  \eqref{original} and \eqref{simple model}, respectively. 
\end{remark}

%
%
%
%
%
%
%

\section{Cauchy problem for the Parabolic-Parabolic Toner--Tu model}\label{section simple}

In this section, we study the global-in-time existence of regular solutions to the system \eqref{parabolic} near the steady state $(\rho_s, v_s)$ and its large-time behavior, thereby establishing the proof of Theorem \ref{main simple}.
 
%
%
%
%
%
%
%
\subsection{Global-in-time existence of solutions}

In this part, our main purpose is to prove the following theorem on the global existence and uniqueness of $H^m$ solutions to the system \eqref{simple model}. 

\begin{theorem}\label{simple result}
	Let $m\geq 3$ be an integer with $d=2,3$. If $(u_0,\eta_0)\in H^m$ with $\|(u_0,\eta_0)\|_{H^{d-2}}\leq \epsilon_0$ for sufficiently small $\epsilon_0>0$, then there exists a unique solution $(u,\eta)$ of the PPTT model \eqref{simple model} satisfying
\begin{equation*}
u\in C([0,\infty);H^m)\cap L^2(0,\infty; \dot{H}^{m+1}) \quad \text{and} \quad  \eta\in C([0,\infty);H^m) \cap L^2(0,\infty; \dot{H}^{m+1}).
	\end{equation*}
	Moreover, we have 
    \begin{equation*}
		\|( u, \eta)(t)\|^2_{H^m}+\int^t_0(\|\nabla(u, \eta)(s)\|^2_{H^m}+\|\bar{u}(s)\|^2_{H^m})\,ds \leq  \|( u_0, \eta_0)\|^2_{H^m} \quad \text{for all} \ \ t\geq0. 	
\end{equation*}
\end{theorem}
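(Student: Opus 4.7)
The plan is a standard continuation argument based on uniform-in-time a priori $H^m$ estimates. Combined with the local well-posedness (Theorem~\ref{thm_lwp}), a closed estimate under the smallness of $(u,\eta)$ in $H^{d-2}$ would give global existence via a continuity/bootstrap argument. The key structural features of \eqref{simple model} I would exploit are: full parabolic dissipation for both $u$ (from $-\nabla\mathrm{div}\,u - \Delta u$) and $\eta$ (from $-\Delta\eta$); the pure $L^2$ damping of $\bar u = {\rm e}_1\cdot u$ coming from $-2\bar u\,{\rm e}_1$; the skew nature of the transport by ${\rm e}_1$, which drops out of every energy identity; and the cancellation of the pressure-type coupling $\int\nabla\eta\cdot u\,dx + \int(\nabla\cdot u)\,\eta\,dx = 0$ after integration by parts in the sum of the two $L^2$ identities.

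First I would derive the zeroth-order identity by pairing each equation with $u$ and $\eta$:
\[
\tfrac{1}{2}\tfrac{d}{dt}\|(u,\eta)\|_{L^2}^2 + \|\nabla u\|_{L^2}^2 + \|\mathrm{div}\,u\|_{L^2}^2 + \|\nabla\eta\|_{L^2}^2 + 2\|\bar u\|_{L^2}^2 + \|u\|_{L^4}^4 = \mathcal{N}_0,
\]
where $\mathcal{N}_0$ collects the nonlinear contributions from $u\cdot\nabla u$, $-|u|^2{\rm e}_1$, $-2\bar u\,u$, and $-\nabla\cdot(\eta u)$; note the extra coercive quartic $\|u\|_{L^4}^4$ supplied by the Toner--Tu self-damping $-|u|^2 u$. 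For each $1 \leq k \leq m$, applying $\partial^k$ to both equations and pairing with $\partial^k u$ and $\partial^k\eta$ preserves the same pressure cancellation; commutator contributions $[\partial^k,u\cdot\nabla]u$ and $[\partial^k,u\cdot\nabla]\eta$ are handled by Lemma~\ref{big lemma}(iv), while product-type contributions from $|u|^2 u$, $|u|^2{\rm e}_1$, $\bar u\,u$, and $\nabla\cdot(\eta u)$ are handled by Lemma~\ref{big lemma}(iii). Summing over $0 \leq k \leq m$ yields
\[
\tfrac{d}{dt}\|(u,\eta)\|_{H^m}^2 + 2\bigl(\|\nabla(u,\eta)\|_{H^m}^2 + \|\bar u\|_{H^m}^2\bigr) \leq C\,\mathcal{R}(u,\eta),
\]
where $\mathcal{R}$ is a sum of terms of the schematic shape $\|\partial^a u\|_{L^\infty}\|\partial^b(u,\eta)\|_{L^2}\|\partial^c(u,\eta)\|_{L^2}$, i.e.\ precisely the expressions Lemma~\ref{lem_gd1} is built to control, producing bounds of the form
\[
\mathcal{R}(u,\eta) \lesssim \|(u,\eta)\|_{H^{d-2}}\bigl(\|\nabla(u,\eta)\|_{H^m}^2 + \|\bar u\|_{H^m}^2\bigr).
\]

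With $\epsilon_0$ small enough, Young's inequality absorbs $C\,\mathcal{R}$ into the left side. I would then run a continuity argument: define
\[
T^\star := \sup\bigl\{T>0 : \|(u,\eta)(t)\|_{H^{d-2}} \leq 2\epsilon_0 \text{ for all } t\in[0,T]\bigr\},
\]
with $T^\star>0$ by Theorem~\ref{thm_lwp}. On $[0,T^\star)$, integrating the absorbed inequality gives both the asserted bound
\[
\|(u,\eta)(t)\|_{H^m}^2 + \int_0^t \bigl(\|\nabla(u,\eta)(s)\|_{H^m}^2 + \|\bar u(s)\|_{H^m}^2\bigr)\,ds \leq \|(u_0,\eta_0)\|_{H^m}^2,
\]
and the strict improvement $\|(u,\eta)(t)\|_{H^{d-2}} \leq \epsilon_0 < 2\epsilon_0$, ruling out $T^\star < \infty$. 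Uniqueness follows by applying the same energy argument to the difference of two solutions in a lower regularity class.

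The principal difficulty is closing the $H^m$ estimate using only $H^{d-2}$ smallness rather than $H^m$ smallness. Every $L^\infty$-type factor in $\mathcal R$ must be interpolated between the bootstrap-controlled low norm $\|(u,\eta)\|_{H^{d-2}}$ and the highest-order dissipation $\|\partial^{k+1}(u,\eta)\|_{L^2}$, and the dimension-dependent exponents in Lemma~\ref{lem_gd1}---in particular the exponent $\beta(d,k)$ of part~(iv)---are calibrated exactly so that the negative-Sobolev-style interpolation weight does not exceed $d-2$. Bookkeeping each such term at every derivative level $0\leq k\leq m$, verifying that none of the intermediate exponents overshoots $d-2$, and treating the special low cases ($k=1,2$) where Lemma~\ref{lem_gd1}(i) is unavailable separately by Lemma~\ref{lem_gd1}(vi) and (v) when $d=3$, is the main technical effort.
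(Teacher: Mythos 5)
Your proposal is correct and follows essentially the same route as the paper: level-by-level $\dot H^k$ energy estimates for $0\le k\le m$ exploiting the cancellation of the $\nabla\eta$ / $\nabla\cdot u$ coupling, the $\bar u$-damping and the full parabolic dissipation in both equations, with every nonlinear term interpolated (Lemma~\ref{big lemma}, Lemma~\ref{lem_gd1}) so that the prefactor is only $\|(u,\eta)\|_{H^{d-2}}$, followed by absorption and a standard continuation argument; the paper's Lemma~\ref{priori for parabolic} even allows a non-small remainder $\tfrac1{32}\chi_{k\geq1}\|\partial^k u\|_{L^2}^2$ that gets absorbed only after summing over $k$, which is a purely cosmetic difference from your cleaner claimed bound. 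One small correction: the exponent $\beta(d,k)$ in Lemma~\ref{lem_gd1}(iv) is nonpositive and enters only the later decay analysis in negative Sobolev spaces, not the existence proof, so the $L^\infty$ factors at high derivative levels should be handled by Lemma~\ref{lem_gd1}(i), (ii), (vi) (with (v)/(vi) for the low cases $k=1,2$ when $d=3$) exactly as you otherwise describe.
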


\begin{remark}Theorem \ref{simple result} shows that the steady-state solution $(1, {\rm e}_1)$ is stable in $H^m$ for any $m \geq 3$. 
\end{remark}

\begin{remark} To construct the global existence of a solution in $H^m \times H^m$, we only impose the smallness assumption on the initial data in $H^{d-2}$. That is, in the two-dimensional case, we have a global regular solution only under the smallness assumption on the initial data in $L^2$.
\end{remark}

In the rest of this subsection, we focus on the a priori estimate of solutions $(u,\eta)$ in the function space $C([0,T]; H^m) \times C([0,T]; H^m)$. Then this together with the standard continuation argument yields that the local-in-time unique regular solution mentioned in Section \ref{local} can be extended to the global one. For notational simplicity, we set
\[
X_m(T) := C([0,T]; H^m) \cap L^2(0,T; H^{m+1}). 
\]

\begin{lemma}\label{priori for parabolic}  Let $T>0$, $m\geq 3$, and $(u,\eta) \in X_m(T) \times X_m(T)$ be a solution to the system  \eqref{simple model}. Then there exists $C=C(k,d)>0$ such that 
\begin{align}\label{energy_PPTT}
\begin{aligned}
	&\frac{1}{2}\frac{d}{dt}\|\partial^k(u,\eta)\|^2_{L^2}+ \frac34 \|\partial^k\nabla(u,\eta)\|^2_{L^2}+\|\partial^k\bar{u}\|^2_{L^2}\\
	&\quad \leq C\left(\|(u,\eta)\|_{H^{d-2}}+\|(u,\eta)\|^2_{H^{d-2}}\right)\|\partial^k\nabla (u, \eta)\|^2_{L^2}+\frac{1}{32}\chi_{k\geq1} \|\partial^k u\|^2_{L^2}
    \end{aligned}
\end{align}
for $k=0,\dots, m$, where $\chi$ denotes the characteristic function.
\end{lemma}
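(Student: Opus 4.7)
The plan is to apply $\partial^k$ to each equation of \eqref{simple model}, test the $u$-equation with $\partial^k u$ and the $\eta$-equation with $\partial^k\eta$, integrate over $\R^d$, and sum. Integration by parts on the viscous operators produces $\|\partial^k\nabla u\|_{L^2}^2+\|\partial^k{\rm div}\,u\|_{L^2}^2+\|\partial^k\nabla\eta\|_{L^2}^2$ on the LHS, while the reactive damping $-2\bar u\,{\rm e}_1$ contributes $2\|\partial^k\bar u\|_{L^2}^2$ since $(\partial^k u)_1=\partial^k\bar u$. Two structural cancellations are essential: the pressure/continuity pair $\int\partial^k\nabla\eta\cdot\partial^k u\,dx+\int\partial^k{\rm div}\,u\cdot\partial^k\eta\,dx$ vanishes by integration by parts, and the constant-coefficient drifts $\int\partial^k({\rm e}_1\cdot\nabla u)\cdot\partial^k u\,dx$ and $\int\partial^k({\rm e}_1\cdot\nabla\eta)\cdot\partial^k\eta\,dx$ vanish identically. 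Discarding the nonnegative $\|\partial^k{\rm div}\,u\|_{L^2}^2$ and retaining only the $3/4$-fraction of the dissipation and one unit of $\bar u$-damping specified in \eqref{energy_PPTT} leaves $\tfrac14\|\partial^k\nabla(u,\eta)\|_{L^2}^2+\|\partial^k\bar u\|_{L^2}^2$ as slack to absorb nonlinear errors.

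I would then control the five nonlinear integrals: the self-transport $\int\partial^k(u\cdot\nabla u)\cdot\partial^k u\,dx$, the density-velocity coupling $\int\partial^k\nabla\cdot(\eta u)\cdot\partial^k\eta\,dx$, the cubic $\int\partial^k(|u|^2 u)\cdot\partial^k u\,dx$, and the quadratic algebraic pieces $\int\partial^k(|u|^2{\rm e}_1)\cdot\partial^k u\,dx$ and $\int\partial^k(\bar u u)\cdot\partial^k u\,dx$. For the transport integrals, the commutator split $\partial^k(u\cdot\nabla w)=u\cdot\nabla\partial^k w+[\partial^k,u\cdot\nabla]w$ reduces the first summand, after integration by parts, to $-\tfrac12\int({\rm div}\,u)|\partial^k w|^2\,dx$, while the commutator is bounded via Lemma \ref{big lemma}(iv). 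For the algebraic nonlinearities, the fractional Leibniz rule Lemma \ref{big lemma}(iii) gives bounds of the type $\|\partial^k(|u|^2 u)\|_{L^2}\lesssim\|u\|_{L^\infty}^2\|\partial^k u\|_{L^2}$ and $\|\partial^k(\bar u u)\|_{L^2}\lesssim\|u\|_{L^\infty}\|\partial^k u\|_{L^2}$. Every $L^\infty$-factor of $u$ or $\nabla u$ is then converted to the smallness norm $H^{d-2}$ through Lemma \ref{lem_gd1}(i)--(iii), whose outputs are engineered to take precisely the shape $(\|u\|_{H^{d-2}}+\|v\|_{L^2})\|\partial^{k+1}(u,v)\|_{L^2}$. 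The degenerate case $k=0$, where no commutator structure is available, is handled directly by Gagliardo-Nirenberg: $\int|u|^4\,dx\lesssim\|u\|_{H^{d-2}}^\alpha\|\nabla u\|_{L^2}^2$ for $d=2,3$ (Ladyzhenskaya's inequality in $d=2$; the $L^2$--$L^6$ chain in $d=3$), combined with the Young splitting $2|\bar u||u|^2\leq\bar u^2+|u|^4$, whose residual $\|\bar u\|_{L^2}^2$ is absorbed into the LHS damping.

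The RHS error $\tfrac{1}{32}\chi_{k\geq1}\|\partial^k u\|_{L^2}^2$ arises as follows: for $k\geq1$ several of the bounds above (typically from $\bar u u$ or from top-order commutator pieces) naturally take the form $\|u\|_{H^{d-2}}\|\partial^k u\|_{L^2}\|\partial^{k+1}u\|_{L^2}$ after invoking Lemma \ref{lem_gd1}(ii) (note that $(d-2)(k+1)/(2k)\leq d-2$ for $k\geq1$, so the Sobolev norm produced there is dominated by $H^{d-2}$). Young's inequality $ab\leq\tfrac{a^2}{2\varepsilon}+\tfrac{\varepsilon b^2}{2}$ with $\varepsilon=16$ converts such a triple product into $\tfrac{1}{32}\|\partial^k u\|_{L^2}^2+8\|u\|_{H^{d-2}}^2\|\partial^{k+1}u\|_{L^2}^2$, matching both RHS ingredients simultaneously. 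The main obstacle I anticipate is bookkeeping: every nonlinear contribution must factor as a power of $\|(u,\eta)\|_{H^{d-2}}$ times $\|\partial^{k+1}(u,\eta)\|_{L^2}^2$ (plus the permitted $\frac{1}{32}\|\partial^k u\|_{L^2}^2$ error for $k\geq1$), with no higher Sobolev norm leaking into the coefficient, since the forthcoming global-existence argument only supplies smallness in $H^{d-2}$. Enforcing this uniformly across all $0\leq k\leq m$ and both $d\in\{2,3\}$ is precisely what necessitates the delicate collection of interpolation bounds in Lemma \ref{lem_gd1}.
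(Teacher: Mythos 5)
Your overall strategy is the paper's: the same structural cancellations (pressure/continuity pairing, vanishing of the ${\rm e}_1\cdot\nabla$ drifts, the $2\|\partial^k\bar u\|_{L^2}^2$ damping from $-2\bar u\,{\rm e}_1$), the commutator split with Lemma \ref{big lemma} (iv), the Moser-type bounds from Lemma \ref{big lemma} (iii), conversion of $L^\infty$ factors into the $H^{d-2}$ smallness norm via Lemma \ref{lem_gd1}, and Young's inequality generating the $\tfrac1{32}\chi_{k\geq1}\|\partial^k u\|_{L^2}^2$ error. For $k\geq 2$ this reproduces the paper's estimates. At $k=0$ there is a small numerical slip: the cross term is $-3\int\bar u|u|^2\,dx$, so the bare splitting $2|\bar u||u|^2\leq \bar u^2+|u|^4$ produces a quartic contribution exceeding the single $\|u\|_{L^4}^4$ available on the left; the paper instead extracts the factor $\|(u,\eta)\|_{H^{d-2}}$ through the $L^2\times L^3\times L^6$ H\"older/Sobolev chain, and your own Gagliardo--Nirenberg bound for $\|u\|_{L^4}^4$ patches this immediately, so I regard it as cosmetic.

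The genuine soft spot is $k=1$, which your bookkeeping does not cover as written. Lemma \ref{lem_gd1} (i) requires $k>1$; Lemma \ref{lem_gd1} (iii) outputs $\|\partial^{k+2}(u,v)\|_{L^2}$, which cannot be absorbed by the $\partial^{k+1}$-level dissipation in \eqref{energy_PPTT}; and $\|\nabla u\|_{L^\infty}$ cannot be interpolated between $\dot H^{s}$ norms with $s\leq k+1=2$ when $d=3$ (one needs the upper index above $1+\tfrac d2=\tfrac52$), so the claim that every $L^\infty$ factor of $u$ or $\nabla u$ converts to $H^{d-2}$ fails for the transport and $\eta$-coupling terms at this order. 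This is exactly why the paper runs a separate $k=0,1$ block: it integrates by parts so the quadratic terms appear undifferentiated against $\partial^2 u$ or $\partial^2\eta$ and then uses $L^3\times L^6\times L^2$ H\"older together with Lemma \ref{big lemma} (ii), (vi). Your commutator route can still be closed at $k=1$ by redistributing derivatives instead of using $L^\infty$: for instance the term reduces to $\int|\nabla u|^3\,dx\lesssim \|\nabla u\|_{L^2}^{3/2}\|\partial^2u\|_{L^2}^{3/2}\lesssim \|u\|_{L^2}^{1/2}\|\nabla u\|_{L^2}^{1/2}\|\partial^2u\|_{L^2}^{2}\lesssim\|u\|_{H^{d-2}}\|\partial^2u\|_{L^2}^2$ in $d=3$ (similarly in $d=2$), and analogously for the mixed $u$--$\eta$ trilinear terms, but this argument is not the one you cite, so as written the case $k=1$ is a gap.
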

\begin{proof} 
To obtain the term $\|(u,\eta)\|^2_{H^{d-2}}$ appropriately, it is crucial to have sharp estimates of the lower-order derivatives of solutions. Thus, we first present the $\dot H^k$ estimates with $k=0,1$ and then address the higher-order estimates.

\medskip

$\bullet$ ($\dot H^k$-estimate with $k=0,1$) We begin with the $L^2$ estimates. By using Lemma \ref{big lemma} (ii) and (vi), we deduce
\begin{equation}\label{L2 esimate for sm}
	\begin{split}
		&\frac{1}{2}\frac{d}{dt}\|(u,\eta)\|^2_{L^2}+\|\nabla ( u,\eta)\|^2_{L^2}+\|\nabla\cdot u\|^2_{L^2}+2\|\bar{u}\|^2_{L^2}+\|u\|^4_{L^4}\\
		&\quad =-\int (u\cdot\nabla u)\cdot u \,dx -3\int \bar{u}|u|^2\,dx-\int\eta\nabla\cdot(\eta u) \,dx\\
		&\quad \leq  \|\nabla u\|_{L^2}\|u\|^2_{L^4}+3\|\bar{u}\|_{L^2}\|u\|_{L^3}\|u\|_{L^6}+\|\nabla\eta\|_{L^2}\|\eta\|_{L^6}\| u\|_{L^3}\\
		&\quad \ls \|\nabla u\|^2_{L^2}\|u\|_{H^{d-2}}+ \|\bar{u}\|_{L^2}\|\Lambda^\frac{d}{6} u\|_{L^2}\|\Lambda^\frac{d}{3} u\|_{L^2}+ \|\nabla\eta\|_{L^2}\|\Lambda^\frac{d}{3} \eta\|_{L^2}\|\Lambda^\frac{d}{6} u\|_{L^2}\\
		&\quad \ls \|\nabla u\|^2_{L^2}\|u\|_{H^{d-2}}+ \|\bar{u}\|_{L^2}\|\Lambda^{\frac{3}{2}-\frac{3}{d}} u\|^\frac{d}{3} _{L^2}\|\nabla u\|^{1-\frac{d}{3}} _{L^2}\|u\|^{1-\frac{d}{3}} _{L^2}\|\nabla u\|^\frac{d}{3} _{L^2}\\
		&\qquad + \|\nabla\eta\|_{L^2}\|\Lambda^{\frac{3}{2}-\frac{3}{d}} u\|^\frac{d}{3} _{L^2}\|\nabla u\|^{1-\frac{d}{3}} _{L^2}\|\eta\|^{1-\frac{d}{3}} _{L^2}\|\nabla \eta\|^\frac{d}{3} _{L^2}\\
		&\quad \leq C \|(u,\eta)\|_{H^{d-2}}\|\nabla (u,\eta)\|^2_{L^2} + \|\bar u\|_{L^2}^2.
	\end{split}
\end{equation}
Similarly, we also find
\begin{equation*}
	\begin{split}
		\frac{1}{2}&\frac{d}{dt}\|\partial (u,\eta)\|^2_{L^2}+\|\partial\nabla (u,\eta)\|^2_{L^2}+\|\partial\nabla \cdot u\|^2_{L^2}+2\|\partial   \bar{u}\|^2_{L^2}\\
		&= \int (u\cdot\nabla u)\cdot\partial^2 u \,dx+\int |u|^2u\cdot\partial^2 u \,dx+\int |u|^2\cdot\partial^2 \bar{u} \,dx+2\int (\bar{u}u)\cdot \partial^2 u \,dx+\int  \partial^2\eta\nabla \cdot(\eta u) \,dx\\
		&=:I+II+III+IV+V,
 	\end{split}
\end{equation*}
where we estimate
\begin{equation*}
	\begin{split}
	I&\leq  \| u\|_{L^3}\|\nabla u\|_{L^6}\|\partial^2 u\|_{L^2}\ls \|\Lambda^\frac{d}{6} u\|_{L^2}\|\nabla\Lambda^\frac{d}{3} u\|_{L^2}\|\partial^2 u\|_{L^2}\\
& \ls \|\Lambda^\frac{3d-6}{3+d} u\|^{\frac{1}{2}+\frac{d}{6} }_{L^2}\|\partial^2 u\|^{\frac{1}{2}-\frac{d}{6}} _{L^2}\| u\|^{\frac{1}{2}-\frac{d}{6} }_{L^2}	\|\partial^2 u\|^{\frac{3}{2}+\frac{d}{6}}_{L^2}\ls \| u\|_{H^{d-2}}\|\partial^2 u\|^2_{L^2}, \cr
	II&=-\int \partial(|u|^2u)\cdot\partial u\,dx\ls \|u\|^2_{L^\infty}\|\partial u\|^2_{L^2}\ls \|u\|^2_{H^{d-2}}\|\nabla\partial u\|^2_{L^2}, \cr
		III+IV
		&\ls \|u\|_{L^3}\|\nabla u\|_{L^6}\|\nabla u\|_{L^2}  \ls  \|\Lambda^\frac{3d-6}{3+d} u\|^{\frac{1}{2}+\frac{d}{6} }_{L^2}	\|\partial^2 u\|^{\frac{1}{2}-\frac{d}{6}} _{L^2}\| u\|^{\frac{1}{2}-\frac{d}{6} }_{L^2}	\|\partial^2 u\|^{\frac{1}{2}+\frac{d}{6}}_{L^2}\|\nabla u\|_{L^2}\\
		& \ls \|u\|_{H^\frac{3d-6}{3+d}}\|\partial^2 u\|_{L^2}\|\nabla u\|_{L^2} \ls \|u\|_{H^{d-2}}\|\nabla u\|^2_{H^1}, 
	\end{split}
\end{equation*}
and
\begin{equation*}
	\begin{split}
		V& =\int (\nabla\eta \cdot u)\partial^2 \eta \,dx+\int (\eta\nabla \cdot u)\partial^2 \eta \,dx\\
		& \leq   \|u\|_{L^3}\|\nabla \eta\|_{L^6}\|\partial^2\eta\|_{L^2} + \|\nabla u\|_{L^6}\| \eta\|_{L^3}\|\partial^2\eta\|_{L^2}\\
		& \ls \|\Lambda^\frac{3d-6}{3+d} u\|^{\frac{1}{2}+\frac{d}{6} }_{L^2}	\|\partial^2 u\|^{\frac{1}{2}-\frac{d}{6}} _{L^2}\| \eta\|^{\frac{1}{2}-\frac{d}{6} }_{L^2}	\|\partial^2 \eta\|^{\frac{3}{2}+\frac{d}{6}}_{L^2} + \|\Lambda^\frac{3d-6}{3+d} \eta\|^{\frac{1}{2}+\frac{d}{6} }_{L^2}	\|\partial^2 \eta\|^{\frac{3}{2}-\frac{d}{6}} _{L^2}\| u\|^{\frac{1}{2}-\frac{d}{6} }_{L^2}	\|\partial^2 u\|^{\frac{1}{2}+\frac{d}{6}}_{L^2}\\
		& \ls \| (u, \eta)\| _{H^{d-2}}\left(\|\partial^2\eta\|^2_{L^2}+ \|\partial^2 u\|^2 _{L^2}\right). 
	\end{split}
\end{equation*}
due to Lemma \ref{big lemma} (ii) and (vi). This yields 
\begin{equation}\label{PPTT_H1}
		\frac{1}{2}\frac{d}{dt} \|\pa^k (u, \eta)\|^2_{L^2}+\|\pa^k \nabla  (u, \eta)\|^2_{L^2}+\|\bar{u}\|^2_{L^2}
		\leq C\|(u,\eta)\|_{H^{d-2}} \|\pa^k \nabla  (u,\eta)\|^2_{L^2}   +\frac{1}{32}\chi_{k\geq1} \|\pa^k u\|^2_{L^2}
\end{equation}
for $k=0,1$.

\medskip

$\bullet$ ($H^k$-estimate with $k\geq 2$) For $k=2,\dots, m$, we get\\
\begin{equation}\label{tt priori estimate}
	\begin{split}
		&\frac{1}{2}\frac{d}{dt}\|\partial^k (u, \eta)\|^2_{L^2}+\|\partial^k\nabla (u,\eta)\|^2_{L^2}+\|\partial^k\nabla\cdot u\|^2_{L^2}+2\|\partial^k   \bar{u}\|^2_{L^2}\\
		&\quad =-\int \partial^k(u \cdot\nabla u)\partial^k u \,dx-\int \partial^k(u |u|^2)\cdot\partial^k u \,dx-\int \partial^k( |u|^2)\partial^k\bar{u} \,dx\\
		&\qquad -2\int \partial^k(\bar{u} u)\cdot\partial^k u \,dx- \int \partial^k(\nabla\cdot(\eta  u))\partial^k \eta \,dx\\
		&\quad =: I+II+III+IV+V.
		\end{split}
\end{equation}
By Lemma \ref{big lemma} (iv), Lemma \ref{lem_gd1} (i), (ii), and (vi), we obtain
\begin{equation*}
\begin{split}
		I  =-\int \left([\partial^k, u\cdot\nabla]u\right)\cdot \partial^k u \,dx-\int \left(u\cdot\partial^k\nabla u\right) \cdot\partial^k u \,dx\ls \|\nabla u\|_{L^\infty}\|\partial^k u\|^2_{L^2}
	 \ls \|u\|_{H^{d-2}}\|\partial^{k+1}u\|_{L^2}^2,
\end{split}
\end{equation*}
\begin{equation*}
\begin{split}
	II& \ls (\|u\|_{L^\infty}\|\partial^k(|u|^2)\|_{L^2}+\||u|^2\|_{L^\infty}\|\partial^ku\|_{L^2})\|\partial^k u\|_{L^2} \ls \|u\|^2_{L^\infty}\|\partial^ku\|^2_{L^2}\ls \|u\|^2_{H^{d-2}}\|\partial^{k+1}u\|^2_{L^2},
\end{split}
\end{equation*}
\begin{equation*}
\begin{split}
	III& \ls \|u\|_{L^\infty}\|\partial^ku\|_{L^2}\|\partial^k\bar{u}\|_{L^2} \leq C\|u\|^2_{L^\infty}\|\partial^ku\|^2_{L^2}+\frac{1}{32}\|\partial^k\bar{u}\|^2_{L^2}   \leq C\|u\|^2_{H^{d-2}}\|\partial^{k+1}u\|^2_{L^2}+\frac{1}{32}\|\partial^k\bar{u}\|^2_{L^2},
	\end{split}
\end{equation*}
\begin{equation*}
\begin{split}
	IV&\ls (\|\bar{u}\|_{L^\infty}\|\partial^ku\| _{L^2}+\|u\|_{L^\infty}\|\partial^k\bar{u}\| _{L^2})\|\partial^ku\| _{L^2}\\
	& \ls (\|\bar{u}\|_{L^\infty}\|\partial^ku\| _{L^2}+\|u\|_{L^\infty}\|\partial^k\bar{u}\| _{L^2})\|\partial^ku\| _{L^2}\\
	& \leq  C\|u\|^2_{L^\infty}\|\partial^k u\|^2 _{L^2}+\frac{1}{32}\|\partial^k u\|^2_{L^2}\\
	& \leq  C\|u\|^2_{H^{d-2}}\|\partial^{k+1} u\|^2_{L^2}+\frac{1}{32} \|\partial^ku\|^2_{L^2},
	\end{split}
\end{equation*}
and
\begin{equation*}\label{temporal 4}
\begin{split}
		V&=- \int \partial^k\nabla\cdot (\eta  u)\partial^k \eta \,dx=-\int \partial^k\eta (\partial^k(\nabla\eta\cdot u) \,dx+\partial^k(\eta\nabla\cdot u)) \,dx\\
	& = -\int \partial^k\eta [\partial^k,u\cdot\nabla]\eta \,dx-\int \partial^k\eta [\partial^k,\eta\nabla\cdot]u \,dx-\int \partial^k\eta (u\cdot\nabla \partial^k\eta)\,dx-\int \partial^k\eta (\eta\partial^k\nabla\cdot u) \,dx\\
	& \leq C\left(\|\nabla u \|_{L^\infty}\|\partial^k\eta\|_{L^2}+\|\nabla \eta\|_{L^\infty}\|\partial^k u\|_{L^2}\right)\|\partial^k \eta\|_{L^2}\\
	&\quad +C\|\nabla u \|_{L^\infty}\|\partial^k\eta\|^2_{L^2}+C\|\eta\|^2_{L^\infty}\|\partial^k \eta\|^2_{L^2}+\frac{1}{32}\|\partial^k\nabla u\|^2_{L^2}	\\
	& \leq   C\left(\|(u,\eta)\|_{H^{d-2}}+\|(u,\eta)\|^2_{H^{d-2}}\right)\|\partial^k\nabla (u, \eta)\|^2_{L^2}+\frac{1}{32}\|\partial^k\nabla u\|^2_{L^2}.
	\end{split}
\end{equation*}
Combining all of the above estimates deduces
\begin{equation*}
    \begin{split}
	\frac{1}{2}&\frac{d}{dt}\|\partial^k(u,\eta)\|^2_{L^2}+\|\partial^k\nabla(u,\eta)\|^2_{L^2}+2\|\partial^k\bar{u}\|^2_{L^2}\\
	&\quad \leq  C\left(\|(u,\eta)\|_{H^{d-2}}+\|(u,\eta)\|^2_{H^{d-2}}\right)\|\partial^k\nabla (u, \eta)\|^2_{L^2}+\frac{1}{32}\left(\|\partial^k u\|^2_{H^1}+\|\partial^k \bar{u}\|^2_{L^2}\right)
    \end{split}
\end{equation*}
for $k\geq 2$. This together with \eqref{PPTT_H1} concludes the desired result.
\end{proof}

We now provide the details of the proof for the global existence of regular solutions.

\begin{proof}[Proof of Theorem \ref{simple result} ]
By taking $\e_0 > 0$ small enough, we obtain from Lemma \ref{priori for parabolic} that
\[
\frac{1}{2}\frac{d}{dt}\|\partial^k(u,\eta)\|^2_{L^2}+\frac34\|\partial^k\nabla(u,\eta)\|^2_{L^2}+\|\partial^k\bar{u}\|^2_{L^2} \leq \frac{1}{32}\chi_{k\geq1}\|\partial^k u\|^2_{L^2}.
\]
Then, we sum over $k$ to have
\begin{equation*}
 \frac{d}{dt}\|(u,\eta)\|^2_{H^m}+ \|\nabla( u, \eta)\|^2_{H^m}+ \|\bar{u}\|^2_{H^m}\leq 0,
\end{equation*}
and subsequently, we conclude
\begin{equation*}
	\begin{split}
		\|(u,\eta)(t)\|^2_{H^m}+\int^t_0 \left(\|\nabla(u, \eta)(s)\|^2_{H^m}+\|\bar{u}(s)\|^2_{H^m}\right)ds& \leq  \|( u_0,\eta_0)\|^2_{H^m} \quad \text{for all} \ \ t\geq0. 
		\end{split}
\end{equation*}
This completes the proof.
\end{proof}

%
%
%
%
%
%
%
\subsection{Large-time behavior}\label{sec_lt_pptt}
In this part, we provide the large-time behavior of solutions $(u,\eta)$ to the system \eqref{simple model} showing polynomial decay rates of convergence of solutions. In particular, this result shows that the steady-state solution $(1, {\rm e}_1)$ is almost polynomially stable.

As observed in the previous section, Lemma \ref{priori for parabolic} provides the uniform-in-time bound of the solutions $(u,\eta)$ in $H^m \times H^m$, however, it does not give an appropriate dissipation rate for the large-time behavior of solutions with a decay rate. In a nutshell, to absorb the term $\chi_{k \geq 1}\|\pa^k u\|_{L^2}^2$ in the energy estimate \eqref{energy_PPTT}, we need to use the dissipation rate from the estimate of $\|(u,\eta)\|_{L^2}^2$, but it is difficult to obtain the zeroth-order dissipation rate from $\|\pa^k \nabla(u,\eta)\|_{L^2}$ for any $k \geq 1$. In order to overcome that difficulty, inspired by \cite{guo2012decay}, we take into account the negative Sobolev space for our solutions.

\begin{lemma}\label{lem_ns_pptt}Let $0< s<\frac{d}{2}$ and the assumptions of Lemma \ref{priori for parabolic} be satisfied. We further assume that $u,\eta \in C([0,T]; \dot H^{-s}) \cap L^2(0,T; \dot H^{1-s})$. Then we have
\begin{equation*}
	\begin{split}
		&\frac{1}{2}\frac{d}{dt}\|  (u,\eta)\|^2_{\dot{H}^{-s}} + \| \nabla (u,\eta)\|^2_{\dot{H}^{-s}}+\|\bar{u}\|^2_{\dot{H}^{-s}}  \cr
		&\quad \leq C(\| (u,\eta)\|_{H^{d-2}\cap\dot{H}^{-s}}+\| (u,\eta)\|^2_{H^{d-2}\cap\dot{H}^{-s}}) \|\nabla (u,\eta)\|^2_{H^{d-2}\cap\dot{H}^{-s}} + \frac12 \|\bar u\|_{L^2}^2 .
	\end{split}
\end{equation*}
Moreover, we obtain
\begin{align*}
		&\frac{1}{2}\frac{d}{dt}\|  (u,\eta)\|^2_{H^{d-2}\cap\dot{H}^{-s}} + \| \nabla (u,\eta)\|^2_{H^{d-2}\cap\dot{H}^{-s}}+\|\bar{u}\|^2_{\dot{H}^{-s}} + \frac12\|\bar u\|_{H^{d-2}}^2  \cr
		&\quad \leq C(\| (u,\eta)\|_{H^{d-2}\cap\dot{H}^{-s}}+\| (u,\eta)\|^2_{H^{d-2}\cap\dot{H}^{-s}}) \|\nabla (u,\eta)\|^2_{H^{d-2}\cap\dot{H}^{-s}}.
\end{align*}
\end{lemma}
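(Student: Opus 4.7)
The strategy is to mirror the $L^2$ energy identity from the proof of Lemma \ref{priori for parabolic} in the negative Sobolev space, and then consolidate with the positive-order estimates.

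I would apply $\Lambda^{-s}$ to both equations in \eqref{simple model}, pair the first with $\Lambda^{-s} u$ and the second with $\Lambda^{-s}\eta$ in $L^2$, and add. The linear terms are handled exactly as in the derivation of \eqref{L2 esimate for sm}: the elliptic operators produce the dissipation $\|\nabla(u,\eta)\|_{\dot H^{-s}}^2$; the friction $-2\bar u e_1$ yields $2\|\bar u\|_{\dot H^{-s}}^2$ (since $e_1\cdot u=\bar u$ and $\Lambda^{-s}$ commutes with the projection onto the first component); the cross terms $\nabla\eta$ and $\nabla\cdot u$ cancel under integration by parts; and each constant-coefficient transport $e_1\cdot\nabla$ commutes with $\Lambda^{-s}$ and produces a pure divergence whose integral vanishes.

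For each nonlinearity $F$ coupling into the $u$- or $\eta$-equation, I would use Cauchy--Schwarz
\[
\Bigl|\int \Lambda^{-s} F \cdot \Lambda^{-s} g\,dx\Bigr| \le \|\Lambda^{-s} F\|_{L^2}\|\Lambda^{-s} g\|_{L^2}, \qquad g \in \{u,\eta\},
\]
followed by the Hardy--Littlewood--Sobolev bound (Lemma \ref{big lemma} (v)) with $p = 2d/(d+2s)$ to obtain $\|\Lambda^{-s} F\|_{L^2} \ls \|F\|_{L^p}$, a H\"older split of $F$, the embedding Lemma \ref{big lemma} (vi) to convert each resulting $L^{d/s}$ norm into $\|\Lambda^{d/2-s}\cdot\|_{L^2}$, and the interpolation of Lemma \ref{big lemma} (ii) to write $\|\Lambda^{d/2-s} u\|_{L^2}$ as a geometric mean of $\|u\|_{\dot H^{-s}}$ and a dissipation norm ($\|u\|_{\dot H^{1-s}}$ for $d=2$, or also $\|\nabla u\|_{H^{d-2}}$ for $d=3$). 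A Young's inequality then produces bounds of the form $\|(u,\eta)\|_{H^{d-2}\cap\dot H^{-s}}\|\nabla(u,\eta)\|^2_{H^{d-2}\cap\dot H^{-s}}$ for the quadratic advection terms $u\cdot\nabla u$ and $\nabla\cdot(\eta u)$, and $\|(u,\eta)\|^2_{H^{d-2}\cap\dot H^{-s}}\|\nabla(u,\eta)\|^2_{H^{d-2}\cap\dot H^{-s}}$ for the cubic terms $|u|^2 u$, $|u|^2 e_1$, $2\bar u u$. For those pairings that leave a $\|\Lambda^{-s}\bar u\|_{L^2}$ factor (namely those stemming from $-|u|^2 e_1$ and $-2\bar u u$), I would apply Young's $\varepsilon$-inequality to absorb a small multiple of $\|\bar u\|_{\dot H^{-s}}^2$ into the LHS, the remaining square being controlled by the above scheme up to a residual $\tfrac12\|\bar u\|_{L^2}^2$, in perfect parallel with the $\|\bar u\|_{L^2}^2$ appearing in \eqref{L2 esimate for sm}. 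This yields the first inequality. For the second inequality, I would sum the first estimate with the sum of \eqref{energy_PPTT} over $k=0,\dots,d-2$: the small terms $\tfrac{1}{32}\|\pa^k u\|_{L^2}^2$ are absorbed into the $\tfrac34\|\pa^k\nabla u\|_{L^2}^2$ dissipation for $k\ge 1$, and the residual $\tfrac12\|\bar u\|_{L^2}^2$ is absorbed into the $\|\bar u\|_{H^{d-2}}^2 \ge \|\bar u\|_{L^2}^2$ now present on the LHS.

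The main technical obstacle is selecting Sobolev exponents in Lemma \ref{big lemma} (ii) and (vi) that work uniformly for both $d=2$ and $d=3$ under the constraint $0<s<d/2$. For $d=2$, $H^{d-2}=L^2$ offers no extra derivative, so the interpolation of $\|u\|_{L^{d/s}}$ must be routed through $\dot H^{-s}$ and $\dot H^{1-s}$ alone (the critical case $d/2-s=1-s$), whereas for $d=3$ one must also invoke $\|u\|_{\dot H^2}\ls \|\nabla u\|_{H^{d-2}}$ as the second endpoint. The cubic nonlinearities are the most sensitive, since three factors of $u$ must be distributed so that exactly one sits as the small quantity in $H^{d-2}\cap\dot H^{-s}$ and the other two combine into a squared dissipation norm.
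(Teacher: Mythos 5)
Your overall scheme is the same as the paper's: the $\Lambda^{-s}$ energy identity with exactly the cancellations you list, then Cauchy--Schwarz plus Lemma \ref{big lemma} (v) with $1/p=1/2+s/d$ and Lemma \ref{big lemma} (vi) to reduce each nonlinearity to $\|\cdot\|_{\dot H^{d/2-s}}$ times lower-order norms, Young absorption of the $\bar u$-pairings into the $2\|\bar u\|^2_{\dot H^{-s}}$ produced by the alignment term with a residual $\tfrac12\|\bar u\|^2_{L^2}$, and the second assertion obtained by adding the estimate \eqref{energy_PPTT} summed over $k=0,\dots,d-2$. The target bounds you state (one power of the small norm times squared dissipation for the quadratic terms, two powers for the cubic ones) are precisely those in the paper's proof.

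The one step that would fail as written is your treatment of $\|\Lambda^{d/2-s}u\|_{L^2}$ for $d=3$: you propose to interpolate it between $\|u\|_{\dot H^{-s}}$ and a dissipation norm and then apply Young. But the small factor $\|u\|_{\dot H^{-s}}$ has already been spent as the Cauchy--Schwarz leftover $\|\Lambda^{-s}u\|_{L^2}$ (or $\|\Lambda^{-s}\eta\|_{L^2}$); if $\|u\|_{\dot H^{d/2-s}}$ carries a positive weight $\theta$ on the $\dot H^{-s}$ endpoint, then, writing $a=\|(u,\eta)\|_{H^{d-2}\cap\dot{H}^{-s}}$ and $b=\|\nabla(u,\eta)\|_{H^{d-2}\cap\dot{H}^{-s}}$, the quadratic terms are only bounded by $a^{1+\theta}b^{2-\theta}$, and no Young inequality converts $a^{1+\theta}b^{2-\theta}$ into $C(a+a^2)b^2$ (let $b\to0$ with $a$ fixed). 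The repair is exactly the paper's observation: since $1-s\le d/2-s\le d-1$, one interpolates only between the dissipation endpoints $\dot H^{1-s}=\nabla(\cdot)$ in $\dot H^{-s}$ and $\dot H^{d-1}$, so that $\|u\|_{\dot H^{d/2-s}}\le\|\nabla u\|_{H^{d-2}\cap\dot{H}^{-s}}$ outright; then the quadratic and cubic terms come out in the stated $ab^2$, respectively $a^2b^2$, form with no Young step needed there (your $d=2$ case is unaffected, since then $d/2-s=1-s$ and the $\dot H^{-s}$ weight is zero). Two harmless bookkeeping remarks: the $-2\bar u u$ pairing naturally leaves $\|\bar u\|_{L^2}$ rather than $\|\Lambda^{-s}\bar u\|_{L^2}$, which is where the paper's residual $\tfrac12\|\bar u\|^2_{L^2}$ comes from, and in the second assertion the term $\tfrac{1}{32}\chi_{k\ge1}\|\partial^k u\|^2_{L^2}$ is absorbed by the dissipation at level $k-1$ (i.e.\ $\|\partial^{k-1}\nabla u\|^2_{L^2}$), not at level $k$; after summing over $k\le d-2$ this works as you intend.
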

\begin{proof}
A straightforward computation gives
\begin{equation*}
    \begin{split}
	&	\frac{1}{2}\frac{d}{dt}\|  (u,\eta)\|^2_{\dot{H}^{-s}} + \| \nabla (u,\eta)\|^2_{\dot{H}^{-s}}+\| \nabla\cdot u\|^2_{\dot{H}^{-s}} + 2\|\bar u\|_{\dot H^{-s}}^2\\
	&\quad =  -\int \Lambda^{-s}\nabla\cdot(\eta u)\Lambda^{-s}\eta \,dx-\int \Lambda^{-s}(u\cdot\nabla u)\Lambda^{-s}u \,dx-\int \Lambda^{-s}(|u|^2 u)\Lambda^{-s}u \,dx\\
	&\qquad -\int \Lambda^{-s}(| u|^2)\Lambda^{-s}\bar{u}\,dx -2\int \Lambda^{-s}(\bar u u)\Lambda^{-s}u \,dx\\
	&\quad =: I+II+III+IV+V.
    \end{split}
\end{equation*}
Note that 
\[
\|\nabla\cdot(\eta u)\|_{\dot H^{-s}} 
\ls \|u\cdot\nabla\eta\|_{L^{\frac{1}{\frac{1}{2}+\frac{s}{d}}}} + \|\eta\nabla\cdot u\|_{L^{\frac{1}{\frac{1}{2}+\frac{s}{d}}}} \ls \| u \|_{\dot H^{\frac d2-s}}\|\nabla \eta\|_{L^2} + \| \eta \|_{\dot H^{\frac d2-s}}\|\nabla u\|_{L^2} 
\]
thanks to Lemma \ref{big lemma} (v) and (vi). Similarly, we get
\[
 \|\Lambda^{-s}(u\cdot\nabla u)\|_{L^2} \ls \| u \|_{\dot H^{\frac d2-s}}\|\nabla u\|_{L^2} \quad \mbox{and} \quad \| \Lambda^{-s}| u|^2\|_{L^2} \ls \| u \|_{\dot H^{\frac d2-s}}\|u\|_{L^2}.
\]
Using those estimates, we obtain
\begin{align*}
I + II  + V&\ls \lt(\| u \|_{\dot H^{\frac d2-s}}\|\nabla \eta\|_{L^2} + \| \eta \|_{\dot H^{\frac d2-s}}\|\nabla u\|_{L^2} \rt)\|\eta\|_{\dot H^{-s}} +  \| u \|_{\dot H^{\frac d2-s}}\|\nabla u\|_{L^2}\|u\|_{\dot H^{-s}}  \cr
&\quad  +\| u \|_{\dot H^{\frac d2-s}}\|\bar u\|_{L^2}\|u\|_{\dot H^{-s}}\cr
&\leq C\|\nabla (u,\eta)\|_{H^{d-2}\cap\dot{H}^{-s}}^2\|(u,\eta)\|_{\dot H^{-s}}(1 + \|(u,\eta)\|_{\dot H^{-s}}) + \frac12 \|\bar u\|_{L^2}^2,
\end{align*}
and
\[
IV \ls \|\Lambda^{\frac d2-s} u\|_{L^2}\|u\|_{L^2}\|\Lambda^{-s} \bar u\|_{L^2}  \leq C\| u \|_{\dot H^{\frac d2-s}}^2\|u\|_{L^2}^2 + \frac12 \|\bar u\|_{\dot H^{-s}}^2 \leq C\|\nabla u\|_{H^{d-2}\cap\dot{H}^{-s}}^2\|u\|_{L^2}^2 + \frac12 \|\bar u\|_{\dot H^{-s}}^2 ,
\]
where we used $\| u \|_{\dot H^{\frac d2-s}} \leq \|\nabla u\|_{H^{d-2}\cap\dot{H}^{-s}}$ due to $1 - s \leq \frac d2 -s \leq d-1$.

For $III$, we use the estimate in \eqref{L2 esimate for sm} to get
\[
\||u|^2\|_{L^2} \ls \|u\|_{L^3}\|u\|_{L^6} \ls \|u\|_{H^{\frac32 - \frac3d}} \|\nabla u\|_{L^2},
\]
and thus
\begin{align*}
III &\ls \||u|^2 u\|_{L^{\frac{1}{\frac{1}{2}+\frac{s}{d}}}}\|\Lambda^{-s}u\|_{L^2} \cr
&\ls \| u \|_{\dot H^{\frac d2-s}}  \|| u|^2\|_{L^2}\|u\|_{\dot H^{-s}} \cr
& \ls \| u \|_{\dot H^{\frac d2-s}}  \|u\|_{H^{\frac32 - \frac3d}} \|\nabla u\|_{L^2}\|u\|_{\dot H^{-s}}\cr
&\ls \| u\|^2_{H^{d-2}\cap\dot{H}^{-s}} \|\nabla u\|^2_{H^{d-2}\cap\dot{H}^{-s}}.
\end{align*}
We then combine all of the above estimates to conclude the first assertion. 

The second assertion readily follows from the combination of the above and Lemma \ref{priori for parabolic}.
\end{proof}


 Under the additional smallness assumptions on $\|(u,\eta)\|_{\dot H^{-s}}$ for $0 < s < \frac d2$, Lemma \ref{lem_ns_pptt} gives the uniform-in-time boundedness of $\|(u,\eta)\|_{\dot H^{-s}}$. Together with this, we modify the $H^m$ estimates of solutions \eqref{energy_PPTT} to get the temporal decay rates of the PPTT model \eqref{simple model} in the lemma below. In brief, we remove the $\| u\|_{\dot H^1}^2$ term on the right-hand side of \eqref{energy_PPTT} that allows us to employ the Gagliardo--Nirenberg interpolation inequality Lemma \ref{big lemma} (ii) to get appropriate dissipation rates.
 
\begin{lemma}\label{decay estimate for parabolic} Let $T>0$, $m\geq 3$, $\max\{0, -\beta(d,m)\} \leq s<\frac{d}{2}$ with 
\bq\label{betadm}
\beta(d,m)=\frac{\left(\frac{d}{2}+m-1\right)-\sqrt{\left(\frac{d}{2}+m-1\right)^2+8m-2dm-2d}}{2} \leq 0, 
\eq
and the assumptions of Lemma \ref{lem_ns_pptt} be satisfied.  
Then we have
\begin{equation*}
\begin{split}
	&\frac{1}{2}\frac{d}{dt}\|\partial^k (u, \eta)\|^2_{L^2}+\|\partial^k\nabla (u,\eta)\|^2_{L^2}+2\|\partial^k   \bar{u}\|^2_{L^2}\\
&\quad \leq C(\| (u,\eta)\|_{{H^{d-2}}\cap \dot{H}^{\beta(d,k)}}+\|(u,\eta)\|^2_{H^{d-2}})(\|\partial^k\nabla (u,\eta)\|^2_{L^2}+\|\partial^k \bar{u}\|^2_{L^2})\\
&\qquad +\frac{1}{32}\left(\|\partial^k\nabla u\|^2_{L^2}+\|\partial^k \bar{u}\|^2_{L^2}\right)
\end{split}
\end{equation*}
for $k=2,\dots, m$, where $C > 0$ is independent of $t$.
\end{lemma}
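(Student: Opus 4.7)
The plan is to mirror the argument of Lemma \ref{priori for parabolic} for $k=2,\dots,m$, but at the junctures where that proof invoked Young's inequality to produce the offending residue $\frac{1}{32}\|\partial^k u\|_{L^2}^2$ (which has no matching dissipation on the left for $k\ge 1$), I will instead deploy Lemma \ref{lem_gd1} (iv). That lemma trades one $\|u\|_{L^\infty}$ factor for the negative-Sobolev norm $\|u\|_{\dot{H}^{\beta(d,k)}}$, paying the price of an extra $\|\partial^{k+1}v\|_{L^2}^2$, which is precisely a genuine dissipation quantity sitting on the left.

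Starting from the identity \eqref{tt priori estimate}, the estimate reduces to bounding $I+II+III+IV+V$ with the same definitions as before. Terms $I$ and $V$ go through verbatim: $I\ls \|\nabla u\|_{L^\infty}\|\partial^k u\|_{L^2}^2$ is controlled by Lemma \ref{lem_gd1} (i), and $V$ contributes $(\|(u,\eta)\|_{H^{d-2}}+\|(u,\eta)\|^2_{H^{d-2}})\|\partial^k\nabla(u,\eta)\|_{L^2}^2 + \frac{1}{32}\|\partial^k\nabla u\|_{L^2}^2$ exactly as before. The cubic term $II$ is bounded by $\|u\|_{L^\infty}^2\|\partial^k u\|_{L^2}^2$ via Lemma \ref{big lemma} (iii), and then by the square of Lemma \ref{lem_gd1} (ii) by $\|u\|_{H^{d-2}}^2\|\partial^{k+1}u\|_{L^2}^2$. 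For $III\ls \|u\|_{L^\infty}\|\partial^k u\|_{L^2}\|\partial^k\bar u\|_{L^2}$ a weighted Cauchy inequality yields $|III|\leq C\|u\|_{L^\infty}^2\|\partial^k u\|_{L^2}^2 + \frac{1}{32}\|\partial^k\bar u\|_{L^2}^2$, whose first summand is absorbed by Lemma \ref{lem_gd1} (ii); crucially the $\frac{1}{32}$-residue now lands on the genuine dissipation $\|\partial^k\bar u\|_{L^2}^2$ rather than on $\|\partial^k u\|_{L^2}^2$.

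The decisive change is in $IV=-2\int\partial^k(\bar u\,u)\cdot\partial^k u\,dx$. Lemma \ref{big lemma} (iii) gives
\[
|IV|\ls \|\bar u\|_{L^\infty}\|\partial^k u\|_{L^2}^2 + \|u\|_{L^\infty}\|\partial^k\bar u\|_{L^2}\|\partial^k u\|_{L^2}.
\]
In Lemma \ref{priori for parabolic} the first summand was Young-split into $C\|u\|_{L^\infty}^2\|\partial^k u\|_{L^2}^2 + \frac{1}{32}\|\partial^k u\|_{L^2}^2$, and the bare $\frac{1}{32}\|\partial^k u\|_{L^2}^2$ is precisely the residue the current lemma must avoid. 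The remedy is to apply Lemma \ref{lem_gd1} (iv) with $u\leftarrow\bar u$ and $v\leftarrow u$:
\[
\|\bar u\|_{L^\infty}\|\partial^k u\|_{L^2}^2 \ls \|(\bar u,u)\|_{\dot{H}^{\beta(d,k)}}\bigl(\|\partial^k\bar u\|_{L^2}^2+\|\partial^{k+1}u\|_{L^2}^2\bigr) \ls \|u\|_{\dot{H}^{\beta(d,k)}}\bigl(\|\partial^k\bar u\|_{L^2}^2+\|\partial^{k+1}u\|_{L^2}^2\bigr),
\]
which is exactly of the ``smallness $\times$ dissipation'' shape required by the statement. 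The second summand of $IV$ is handled exactly as in the estimate for $III$. Assembling all five contributions, discarding the nonnegative $\|\partial^k\nabla\cdot u\|_{L^2}^2$, and noting that the surviving $\frac{1}{32}$-residues all sit on $\|\partial^k\nabla u\|_{L^2}^2$ or $\|\partial^k\bar u\|_{L^2}^2$ produces the claimed inequality. The only real obstacle I anticipate is the borderline Sobolev exponent $(d-2)(k+1)/(2(k-1))=3/2>1$ that Lemma \ref{lem_gd1} (i) formally returns for $I$ at $d=3,\ k=2$; this special case is instead handled by Lemma \ref{lem_gd1} (vi), which is tailored exactly for it and delivers the required $\|u\|_{H^1}\|\partial^3 u\|_{L^2}^2$ control.
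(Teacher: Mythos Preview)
Your proof is correct and follows essentially the same route as the paper: you reuse the estimates for $I,II,III,V$ from Lemma \ref{priori for parabolic} and redo only $IV$, applying Lemma \ref{lem_gd1} (iv) to the term $\|\bar u\|_{L^\infty}\|\partial^k u\|_{L^2}^2$ to convert the old residue $\frac{1}{32}\|\partial^k u\|_{L^2}^2$ into a smallness$\times$dissipation contribution. The only cosmetic difference is that for the cross piece $\|u\|_{L^\infty}\|\partial^k\bar u\|_{L^2}\|\partial^k u\|_{L^2}$ the paper applies Lemma \ref{lem_gd1} (ii) directly to obtain $\|u\|_{H^{(d-2)(k+1)/(2k)}}\bigl(\|\partial^k\bar u\|_{L^2}^2+\|\partial^{k+1}u\|_{L^2}^2\bigr)$ without any Young-type residue, whereas you route it through Young's inequality as in $III$; both versions land inside the claimed right-hand side after adjusting the $\tfrac{1}{32}$ to $\tfrac{1}{64}$ in the splits.
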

\begin{proof}
We first recall the inequality \eqref{tt priori estimate}:
\begin{equation*}
	\begin{split}
		\frac{1}{2}&\frac{d}{dt}\|\partial^k (u, \eta)\|^2_{L^2}+\|\partial^k\nabla (u,\eta)\|^2_{L^2}+\|\nabla\cdot\partial^k u\|^2_{L^2}+2\|\partial^k   \bar{u}\|^2_{L^2}\\
		&=-\int \partial^k(u \cdot\nabla u)\partial^k u \,dx-\int \partial^k(u |u|^2)\partial^k u \,dx-\int \partial^k( |u|^2)\partial^k \bar{u} \,dx\\
		&\quad -2\int \partial^k(\bar{u} u)\partial^k u \,dx- \int \partial^k\nabla\cdot(\eta  u)\partial^k \eta \,dx\\
		&=:I+II+III+IV+V,
		\end{split}
\end{equation*}
where
\begin{equation*}
\begin{split}
	I+II+III+V\leq C\left(\|(u,\eta)\|_{H^{d-2}}+\|(u,\eta)\|^2_{H^{d-2}}\right)\|\partial^k\nabla (u, \eta)\|^2_{L^2}+\frac{1}{32}\left(\|\partial^k\nabla u\|^2_{L^2}+\|\partial^k \bar{u}\|^2_{L^2}\right).
\end{split}
\end{equation*}
For $IV$, we use Lemma \ref{big lemma} (iii) and Lemma \ref{lem_gd1} (ii), (iv) to deduce
\begin{equation*}
\begin{split}
	IV&=-2\int \partial^k(\bar{u} u)\partial^k u\leq C(\|\bar{u}\|_{L^\infty}\|\partial^ku\| _{L^2}+\|u\|_{L^\infty}\|\partial^k\bar{u}\| _{L^2})\|\partial^ku\| _{L^2}\\
	& \leq  C(\|\bar{u}\|_{L^\infty}\|\partial^ku\| _{L^2}+\|u\|_{L^\infty}\|\partial^k\bar{u}\| _{L^2})\|\partial^ku\| _{L^2}\\
	& \leq  C\|u\|_{\dot{H}^{\beta(d,k)}}\left(\|\partial^k \bar{u}\|^2_{L^2}+\|\partial^{k+1}u\|^2_{L^2}\right)+C\|u\|_{H^\frac{(d-2)(k+1)}{2k}}\left(\|\partial^k \bar{u}\|^2_{L^2}+\|\partial^{k+1}u\|^2_{L^2}\right)\\
	& \leq  C\|u\|_{\dot{H}^{\beta(d,k)}\cap H^{\frac{(d-2)(k+1)}{2k}}}\left(\|\partial^k \bar{u}\|^2_{L^2}+\|\partial^{k+1}u\|^2_{L^2}\right).
	\end{split}
\end{equation*}
Summing those estimates concludes the desired result.
\end{proof}

\begin{theorem}\label{parabolic decay}
Let the assumptions of Theorem \ref{simple result} be satisfied. If we further assume $\|(u_0,\eta_0)\|_{H^{-s}}\leq \epsilon_1$ for some $\epsilon_1=\epsilon_1(s)>0$, where $0<-\beta(d,m)\leq s<\frac{d}{2}$ with $\beta(d,m)$ given as in \eqref{betadm}, then there exists a constant $C>0$ independent of $t$ such that 

	\begin{equation*}
	\|\Lambda^l (u,\eta)(t)\|_{L^2}\leq \frac{C}{(1+t)^\frac{s+l}{2}}
\end{equation*}
for any $-s< l \leq m$.
\end{theorem}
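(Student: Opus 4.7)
The plan is to follow the now-standard Guo–Wang negative-Sobolev framework: bootstrap a uniform-in-time bound for the negative Sobolev norm $\|(u,\eta)\|_{\dot H^{-s}}$ from Lemma \ref{lem_ns_pptt}, then combine the closed $H^m$ energy–dissipation inequality already obtained in the proof of Theorem \ref{simple result} with a Gagliardo–Nirenberg-type interpolation against $\dot H^{-s}$ (Lemma \ref{big lemma} (ii)) to upgrade it into a nonlinear Lyapunov inequality of the form $\tfrac{d}{dt}\me + c\,\me^{1+1/(m+s)}\le 0$. An elementary ODE comparison yields polynomial decay for the top-level norm, and a second interpolation between $\dot H^{-s}$ and $\dot H^m$ supplies the full family of decay rates for $-s<l\le m$.

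First I would establish the bootstrap on $\dot H^{-s}$. By Theorem \ref{simple result} the solution is global with $\|(u,\eta)(t)\|_{H^m}\le \|(u_0,\eta_0)\|_{H^m}$; in particular the smallness hypothesis $\|(u_0,\eta_0)\|_{H^{d-2}}\le \e_0$ survives. Under the additional assumption $\|(u_0,\eta_0)\|_{\dot H^{-s}}\le \e_1$, I would apply the second estimate of Lemma \ref{lem_ns_pptt}: after absorbing the small coefficient into the dissipation (using smallness in $H^{d-2}\cap \dot H^{-s}$), one obtains
\[
\tfrac{d}{dt}\|(u,\eta)\|^2_{H^{d-2}\cap \dot H^{-s}}+c\|\nabla(u,\eta)\|^2_{H^{d-2}\cap\dot H^{-s}}+c\|\bar u\|^2_{H^{d-2}}\le 0.
\]
A continuity argument, choosing $\e_1$ small enough so the smallness hypothesis on $\|(u,\eta)\|_{H^{d-2}\cap\dot H^{-s}}$ is propagated, gives $\|(u,\eta)(t)\|_{\dot H^{-s}}\le C\e_1$ for all $t\ge 0$.

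Next I would combine this uniform bound with the closed estimate $\tfrac{d}{dt}\|(u,\eta)\|^2_{H^m}+\|\nabla(u,\eta)\|^2_{H^m}+\|\bar u\|^2_{H^m}\le 0$ from Theorem \ref{simple result}. Setting $\me(t):=\|(u,\eta)(t)\|^2_{H^m}$ and $\md(t):=\|\nabla(u,\eta)(t)\|^2_{H^m}+\|\bar u(t)\|^2_{H^m}$, Lemma \ref{big lemma} (ii) yields, for every $0\le k\le m$,
\[
\|\pa^k(u,\eta)\|_{L^2}\ls \|(u,\eta)\|_{\dot H^{-s}}^{(m+1-k)/(m+1+s)}\|\pa^{m+1}(u,\eta)\|_{L^2}^{(k+s)/(m+1+s)}.
\]
Using the uniform $\dot H^{-s}$-bound from Step~1 and summing over $k$, the worst exponent $k=m$ dominates and produces $\me \ls \md^{(m+s)/(m+1+s)}$, i.e.\ $\me^{1+1/(m+s)}\ls \md$. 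Inserting this into the energy inequality gives $\tfrac{d}{dt}\me+c\,\me^{1+1/(m+s)}\le 0$, and the standard ODE comparison yields $\me(t)\le C(1+t)^{-(m+s)}$, so $\|(u,\eta)(t)\|_{H^m}\le C(1+t)^{-(m+s)/2}$.

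Finally, for any $-s<l\le m$, Lemma \ref{big lemma} (ii) applied between the endpoints $\dot H^{-s}$ and $\dot H^m$ gives
\[
\|\Lambda^l(u,\eta)\|_{L^2}\le \|(u,\eta)\|_{\dot H^{-s}}^{(m-l)/(m+s)}\|(u,\eta)\|_{\dot H^m}^{(l+s)/(m+s)},
\]
and plugging the uniform $\dot H^{-s}$-bound together with the decay of $\|(u,\eta)\|_{\dot H^m}$ yields $\|\Lambda^l(u,\eta)(t)\|_{L^2}\le C(1+t)^{-(l+s)/2}$, exactly as claimed. The main obstacle is the bootstrap in Step~1: the negative Sobolev norm is not automatically dissipated, and the nonlinear right-hand side of Lemma \ref{lem_ns_pptt} mixes $\dot H^{-s}$ with $H^{d-2}$-type norms, so one must carefully quantify the continuity argument to ensure that the smallness of $\|(u,\eta)\|_{H^{d-2}\cap \dot H^{-s}}$ persists globally; once this uniform control is in hand, Steps~2–4 are essentially algebraic.
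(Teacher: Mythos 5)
Your Step 1 (propagating the $\dot H^{-s}$ bound via Lemma \ref{lem_ns_pptt}) and your final interpolation step match the paper, but Step 2 contains a genuine error that breaks the argument. From
\[
\|\partial^k(u,\eta)\|_{L^2}\ls \|(u,\eta)\|_{\dot H^{-s}}^{\frac{m+1-k}{m+1+s}}\|\partial^{m+1}(u,\eta)\|_{L^2}^{\frac{k+s}{m+1+s}},
\]
summing over $k$ does \emph{not} give $\me\ls\md^{\frac{m+s}{m+1+s}}$: in the relevant regime the dissipation $\md$ is small (it is integrable in time by Theorem \ref{simple result}), so among the terms $\md^{\frac{k+s}{m+1+s}}$ the \emph{smallest} exponent, $k=0$, dominates, not $k=m$. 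The best one can extract this way is $\me\ls\md^{\frac{s}{1+s}}$ (interpolating the $L^2$ part between $\dot H^{-s}$ and $\dot H^{1}$), which yields $\frac{d}{dt}\me+c\,\me^{1+\frac1s}\le 0$ and hence only the uniform rate $\|(u,\eta)(t)\|_{H^m}\ls (1+t)^{-s/2}$ — far from the graded rates $(l+s)/2$ claimed in the theorem. Moreover, your intermediate conclusion $\|(u,\eta)(t)\|_{H^m}\le C(1+t)^{-(m+s)/2}$ cannot be true in general: it would force the $L^2$ norm to decay at rate $(m+s)/2$, whereas with only a bounded (not decaying) $\dot H^{-s}$ norm the $L^2$ component decays at most like $(1+t)^{-s/2}$, which is exactly the $l=0$ rate the theorem asserts.

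The reason the paper proves Lemma \ref{decay estimate for parabolic} is precisely to avoid working with the full inhomogeneous $H^m$ energy. That lemma re-estimates the troublesome term so that, for each fixed $k\in[2,m]$ and under smallness in $H^{d-2}\cap\dot H^{\beta(d,k)}$, one obtains a \emph{closed homogeneous} inequality $\frac{d}{dt}\|\partial^k(u,\eta)\|^2_{L^2}+\|\partial^k\nabla(u,\eta)\|^2_{L^2}\le 0$, with no lower-order dissipation needed for absorption (this is where the hypothesis $s\ge -\beta(d,m)$ enters). Taking $k=m$ and interpolating only the top seminorm, $\|\partial^m u\|_{L^2}\le \|\partial^{m+1}u\|_{L^2}^{\frac{m+s}{m+1+s}}\|\Lambda^{-s}u\|_{L^2}^{\frac{1}{m+1+s}}$, gives $\frac{d}{dt}\|\partial^m(u,\eta)\|^2_{L^2}+C\|\partial^m(u,\eta)\|_{L^2}^{\frac{2(m+1+s)}{m+s}}\le 0$, hence $\|\partial^m(u,\eta)(t)\|_{L^2}\ls (1+t)^{-(m+s)/2}$, and then the interpolation between $\dot H^{-s}$ and $\dot H^{m}$ delivers the stated rates for all $-s<l\le m$. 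To repair your proof you must replace your Step 2 by this top-order homogeneous estimate (i.e., you cannot bypass Lemma \ref{decay estimate for parabolic} or an equivalent order-by-order bound); the rest of your outline then goes through.
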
 
	\begin{remark}\label{rmk_gd}
 Note that since $\beta(d,m)=0$ for $m=d=3$, we can take $s=0$, thus the smallness of the negative Sobolev norm is not necessary for Theorem \ref{parabolic decay}.
	\end{remark}
\begin{proof}[Proof of Theorem \ref{parabolic decay}]
For sufficiently small $\| (u,\eta)\|_{H^{d-2}\cap \dot{H}^{-s}} \ll1$, it follows from Lemmas \ref{lem_ns_pptt} and \ref{decay estimate for parabolic} that
 \begin{equation*}
\begin{split}
 \| (u,\eta)(t)\|^2_{H^{d-2}\cap\dot{H}^{-s}}+\int^t_0\|\nabla(u, \eta)(\tau)\|^2_{H^{d-2}\cap \dot{H}^{-s}}d\tau \leq \|(u_0,\eta_0)\|^2_{H^{d-2}\cap\dot{H}^{-s}}  
	\end{split}
\end{equation*}
and
\begin{equation}\label{simple ode}
	\begin{split}
		\frac{d}{dt}\|\partial^k (u,\eta)\|^2_{L^2}+\|\partial^k\nabla (u, \eta)\|^2_{L^2}\leq 0
	\end{split}
\end{equation}
for $k \in [2, m]$. To get the decay rate of convergence, in particular, we take $k=m$ and bound the second term on the left-hand side of the above from below as
\begin{equation*}
	\begin{split}
		\|\partial^m u\|_{L^2}\leq \|\partial^{m+1} u\|^\frac{m+s}{m+1+s}_{L^2}\|\Lambda^{-s}u\|^\frac{1}{m+1+s}_{L^2} \leq C_0\|\partial^{m+1} u\|^\frac{m+s}{m+1+s}_{L^2}
	\end{split}
\end{equation*}
for some $C_0 > 0$ independent of $t$.
Applying this inequality to (\ref{simple ode}) yields that
\begin{equation*}
	\begin{split}
		\frac{d}{dt}\|\partial^m (u,\eta)\|^2_{L^2}+C_1\|\partial^m (u, \eta)\|^{\frac{2(m+1+s)}{m+s}}_{L^2}\leq 0,
	\end{split}
\end{equation*}
and subsequently, by solving this differential inequality, we obtain the decay rates of the solutions
\begin{equation*}
	\|\partial^m (u,\eta)\|_{L^2}\leq \frac{C}{(t+1)^\frac{m+s}{2}},
\end{equation*}
where $-\frac{d}{2} <-s\leq \min\{0, \beta(d,m)\}$ and $C>0$ is independent of $t$.  Moreover, by using the interpolation inequality, Lemma \ref{big lemma} (ii), we have
\begin{equation*}
	\|\Lambda^l (u,\eta)(t)\|_{L^2}\leq \|\Lambda^m(u,\eta)(t)\|^\frac{l+s}{m+s} _{L^2}\|\Lambda^{-s}(u,\eta)(t)\|^\frac{m-l}{m+s} _{L^2}\leq \frac{C}{(t+1)^\frac{l+s}{2} }
\end{equation*} 
for any $-s< l \leq m$. This concludes the desired result.
\end{proof}

%
%
%
%
%
%
%
\section{Cauchy problem for the Toner--Tu model} \label{globals tt}
In this section, we consider the global Cauchy problem for the TT model  \eqref{original}.  Similar to the structure of the previous section, we first discuss the global-in-time existence of the unique regular solution to \eqref{original} and then present its large-time behavior in the following two subsections.

%
%
%
%
%
%
%
\subsection{Global-in-time existence of solutions}
In this subsection, we aim to prove the following existence theorem:
	\begin{theorem}\label{tt result}
	   Let $m\geq 3$ be an integer with $d=2,3$.  If $(u_0,\eta_0)\in H^m$ with $\|(u_0,\eta_0)\|_{H^3}\leq \epsilon_0$ for some $\epsilon_0>0$, then there exists the unique solution $(u,\eta)$ of  \eqref{original} satisfying 
\[
u\in C([0,\infty);H^m)\cap L^2(0,\infty; \dot{H}^{m+1})  \quad \mbox{and} \quad  \eta\in C([0,\infty);H^m) \cap L^2(0,\infty; \dot{H}^m).
\]
	Moreover, we have 
	\begin{equation*}
	\begin{split}
		\|( u, \eta)(t)\|^2_{H^m}+\int^t_0(\|\nabla (u,\eta)(s)\|^2_{H^{m-1}} +\|\nabla u(s)\|_{\dot H^m}^2 +\|\bar{u}(s)\|^2_{H^m})\,ds& \leq  C\|( u_0, \eta_0)\|^2_{H^m}
		\end{split}
\end{equation*}
for all $t\geq0$ and some $C>0$ independent of $t$.
\end{theorem}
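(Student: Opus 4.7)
The proof follows the energy-method blueprint of Section \ref{section simple}, combined with the local theory of Theorem \ref{thm_lwp} and a continuation argument, but must accommodate two new structural features of the TT system: the quasilinear third-order term $(u+{\rm e}_1)\cdot\nabla((u+{\rm e}_1)\cdot\nabla u)$ in \eqref{original}, and the absence of self-diffusion in the $\eta$-equation, which is now a pure transport. I would first carry out the natural $\dot H^k$-energy identity for each $0\leq k\leq m$ by applying $\partial^k$ to \eqref{original} and pairing with $(\partial^k u,\partial^k\eta)$. The linearized quasilinear contribution $\partial_{x_1}^2 u$ produces the degenerate dissipation $\|\partial_{x_1}\partial^k u\|_{L^2}^2\geq 0$, which only reinforces the full gradient dissipation already supplied by $-\Delta u-\nabla\nabla\cdot u$, while the linear damping $-2\bar u\,{\rm e}_1$ furnishes $\|\partial^k\bar u\|_{L^2}^2$. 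The cubic and quadratic nonlinearities are handled as in Lemma \ref{priori for parabolic} via Lemma \ref{big lemma} and Lemma \ref{lem_gd1}, producing an error bounded by $\|(u,\eta)\|_{H^3}$ times the high-order dissipation; the stronger $H^3$-smallness (rather than $H^{d-2}$) is forced here by the additional quasilinear products we now need to absorb.

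Since $\eta$ has no self-diffusion, the natural energy yields no $\|\partial^k\nabla\eta\|_{L^2}^2$, and this dissipation must be extracted through a hyperbolic--parabolic cross functional. For each $0\leq k\leq m-1$, I would introduce
\begin{equation*}
\mathcal{K}_k(t):=\int \partial^k u\cdot\partial^k\nabla\eta\,dx,
\end{equation*}
and compute $\tfrac{d}{dt}\mathcal{K}_k$ from \eqref{original}. The pressure term $-\nabla\eta$ in $u_t$ contributes $-\|\partial^k\nabla\eta\|_{L^2}^2$; the term $-\nabla\cdot u$ in $\eta_t$ gives $+\|\partial^k\nabla\cdot u\|_{L^2}^2$, absorbable into the natural dissipation; the transport piece $-{\rm e}_1\cdot\nabla\eta$ integrates by parts to a pairing of $\partial_{x_1}\partial^k u$ with $\partial^k\nabla\eta$ controlled by Cauchy--Schwarz with small constants; and the remaining nonlinear pieces, including those generated by the quasilinear term, are bounded using Lemma \ref{big lemma} and Lemma \ref{lem_gd1} times $\|(u,\eta)\|_{H^3}$. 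Combining the natural identities with the $\mathcal{K}_k$'s weighted by a sufficiently small $\delta>0$ yields a Lyapunov functional $\mathcal{E}_m(t)\sim\|(u,\eta)(t)\|_{H^m}^2+\delta\sum_{k=0}^{m-1}\mathcal{K}_k(t)$ satisfying
\begin{equation*}
\frac{d}{dt}\mathcal{E}_m+c\bigl(\|\nabla u\|_{H^m}^2+\|\nabla\eta\|_{H^{m-1}}^2+\|\bar u\|_{H^m}^2\bigr)\leq C\|(u,\eta)\|_{H^3}\bigl(\|\nabla u\|_{H^m}^2+\|\nabla\eta\|_{H^{m-1}}^2\bigr).
\end{equation*}
Under the hypothesis $\|(u_0,\eta_0)\|_{H^3}\leq\epsilon_0$, the right-hand side is absorbed, a standard bootstrap preserves the $H^3$-smallness for all time, and the continuation argument upgrades the local solution of Theorem \ref{thm_lwp} to a global one with the stated bounds.

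The principal obstacle is expected at the top derivative level $k=m$ in the natural energy step: a naive expansion of $\partial^m[(u\cdot\nabla)(u\cdot\nabla u)]$ formally produces terms with $\partial^{m+2}u$, which cannot be controlled in $L^2$ by the available dissipation $\|\nabla u\|_{\dot H^m}$. The resolution is to exploit the symmetric divergence-like structure $u_k u_i\partial_k\partial_i\partial^m u_j$ and integrate by parts against $\partial^m u_j$: after symmetrizing over the indices $i,k$, the resulting integrand becomes a pairing of $\partial^{m+1}u$ with factors of $u$ and $\nabla u$ placed in $L^\infty$, the latter bounded by $\|u\|_{H^3}$ via Sobolev embedding in $d=2,3$. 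Analogous care is required for the intermediate $2u_k\partial_1\partial_k u_j$ contribution of the quasilinear term and for its appearance in $\tfrac{d}{dt}\mathcal{K}_k$, but these involve no new mechanism beyond careful integration by parts combined with the $H^3$-smallness.
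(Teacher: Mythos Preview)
Your proposal is correct and follows the same strategy as the paper: a natural $H^m$ energy estimate (the paper's Lemma \ref{global tt}) handling the quasilinear term via commutators and integration by parts, the hypocoercivity cross functional $\mathcal K_k=\int\partial^k u\cdot\partial^k\nabla\eta\,dx$ for $0\le k\le m-1$ (Lemma \ref{hyper}) to recover the missing $\eta$-dissipation, and their combination with a small weight $\delta$ plus the continuation argument. The only cosmetic difference is in the mixed quasilinear cross pieces $\int(u\cdot\nabla\partial^k u)\cdot({\rm e}_1\cdot\nabla\partial^k u)\,dx$: the paper keeps the sign-favorable terms $-\|u\cdot\nabla\partial^k u\|_{L^2}^2$ and $-\|{\rm e}_1\cdot\nabla\partial^k u\|_{L^2}^2$ and cancels the cross pieces exactly via Young's inequality, whereas your direct bound $\|u\|_{L^\infty}\|\nabla\partial^k u\|_{L^2}^2\lesssim\|u\|_{H^3}\|\partial^{k+1}u\|_{L^2}^2$ works equally well under the $H^3$-smallness.
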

\begin{remark}
Compared to the PPTT model, we require a stronger regularity for the smallness condition on the initial data. This is due to the lack of dissipation rate with respect to $\eta$. 
\end{remark}
\begin{remark}
Theorem \ref{tt result} gives the stability of the steady-state solutions in $H^m \times H^m$ for $m \geq 3$.
\end{remark}

%
%
%
%
%
%
%
Similarly as before, for the proof of Theorem \ref{tt result}, we only concentrate on the a priori estimates of solutions. 

\begin{lemma}\label{global tt} Let $T>0$, $m\geq 3$, and $(u,\eta) \in X_m(T) \times C([0,T]; H^m)$ be a solution to the system \eqref{original}.
Then we have
\begin{equation*}
	\begin{split}
		&\frac{1}{2}\frac{d}{dt}\|(u,\eta)\|^2_{H^m}+\frac{3}{4} \|\nabla u\|^2_{H^m}+2\|\bar{u}\|^2_{H^m}\leq C\left(\|(u,\eta)\|^4_{H^3}+\|(u,\eta)\|_{H^3}\right)\left(\|\nabla (u, \eta)\|^2_{H^{m-1}}+\|\bar{u}\|^2_{L^2}\right).	\end{split}
\end{equation*}	
	\end{lemma}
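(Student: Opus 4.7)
The plan is to apply $\partial^k$ to both equations of \eqref{original} for each $0\leq k\leq m$, take the $L^2$ inner product with $(\partial^k u,\partial^k\eta)$, and sum. Several cancellations are key: the antisymmetric pressure-continuity coupling $\int\partial^k\nabla\eta\cdot\partial^k u\,dx+\int\partial^k(\nabla\cdot u)\,\partial^k\eta\,dx=0$ holds at every order after IBP; the transport terms $-{\rm e}_1\cdot\nabla u$ and $-{\rm e}_1\cdot\nabla\eta$ vanish when tested against $\partial^k u$ and $\partial^k\eta$ respectively; the diffusion $-\nabla\mathrm{div}\,u-\Delta u$ yields the dissipation $\|\partial^k\nabla u\|^2_{L^2}+\|\partial^k\nabla\cdot u\|^2_{L^2}$ on the LHS; and $-2\bar u\,{\rm e}_1$ dotted with $\partial^k u$ gives precisely $2\|\partial^k\bar u\|^2_{L^2}$ since $\bar u={\rm e}_1\cdot u$. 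Summing over $k$ reproduces the LHS of the lemma up to terms to be absorbed.

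The crucial new ingredient relative to Lemma \ref{priori for parabolic} is the nonlocal second-order term $(u+{\rm e}_1)\cdot\nabla((u+{\rm e}_1)\cdot\nabla u)$. Setting $A=u+{\rm e}_1$ and expanding as $A_iA_j\partial_i\partial_j u+A_i(\partial_i u_j)\partial_j u$, the first piece, after $\partial^k$ and a double IBP, contributes $-\int A_iA_j\,\partial_i\partial^k u\cdot\partial_j\partial^k u\,dx=-\|A\cdot\nabla\partial^k u\|^2_{L^2}\leq 0$, i.e.\ an additional nonnegative dissipation, plus a coefficient-derivative remainder $-\int\partial_j(A_iA_j)\,\partial_i\partial^k u\cdot\partial^k u\,dx$ satisfying $\|\partial_j(A_iA_j)\|_{L^\infty}\ls \|u\|_{H^3}(1+\|u\|_{H^3})$ by Sobolev embedding, together with the $\partial^k$-Leibniz commutators of $A_iA_j$ against $\partial_i\partial_j u$. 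Young's inequality applied to the coefficient-derivative remainder yields $\tfrac18\|\partial^{k+1}u\|^2_{L^2}+C(\|u\|^2_{H^3}+\|u\|^4_{H^3})\|\partial^k u\|^2_{L^2}$; the first piece is absorbed into $\|\partial^k\nabla u\|^2_{L^2}$ (leaving the $\tfrac34$ coefficient on the LHS), and the second produces precisely the $\|u\|^4_{H^3}$ factor in the target bound. The lower-order piece $A_i(\partial_i u_j)\partial_j u$ and the Leibniz commutators are treated in the same way.

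The nonlinearities shared with the PPTT system---namely $u\cdot\nabla u$, the polynomial $-|u|^2u-{\rm e}_1|u|^2-2\bar u u$, and $\nabla\cdot(\eta u)$---are estimated as in the proof of Lemma \ref{priori for parabolic} using the commutator estimate Lemma \ref{big lemma} (iv) and the Agmon-type bounds of Lemma \ref{lem_gd1}. Prototypes are $\int\partial^k(u\cdot\nabla u)\cdot\partial^k u\,dx\ls \|\nabla u\|_{L^\infty}\|\partial^k u\|^2_{L^2}$ and $\int\partial^k(|u|^2u)\cdot\partial^k u\,dx\ls \|u\|^2_{L^\infty}\|\partial^k u\|^2_{L^2}$, with the $L^\infty$ norms controlled by $\|u\|_{H^3}$ via Sobolev embedding, yielding the $\|u\|_{H^3}$ factor in the target bound. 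The $L^2$ case ($k=0$) produces an extra $\|\bar u\|^2_{L^2}$ term (from $\int\bar u|u|^2\,dx$, treated by H\"older and interpolation as in \eqref{L2 esimate for sm}), matching the $\|\bar u\|^2_{L^2}$ appearing on the RHS of the lemma.

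The main obstacle is the absence of $\Delta\eta$ dissipation, which forces delicate absorptions at the top level $k=m$. In $\int\partial^m\nabla\cdot(\eta u)\,\partial^m\eta\,dx$, the piece $\int\eta\,\partial^m(\nabla\cdot u)\,\partial^m\eta\,dx$ contains a $\partial^{m+1}u$ factor that cannot be balanced by any $\eta$-dissipation; we bound it by $\tfrac18\|\partial^{m+1}u\|^2_{L^2}+C\|\eta\|^2_{L^\infty}\|\partial^m\eta\|^2_{L^2}$, absorb the first term into the LHS, and leave $C\|\eta\|_{H^3}\|\partial^m\eta\|^2_{L^2}$ on the RHS. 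The analogous top-order couplings from $(u+{\rm e}_1)\cdot\nabla((u+{\rm e}_1)\cdot\nabla u)$ at $k=m$ are controlled by the same absorption strategy, with the Young constants chosen so that the $\tfrac34$ coefficient on $\|\nabla u\|^2_{H^m}$ in the LHS is preserved. Summing the resulting inequalities for $k=0,1,\dots,m$ delivers the stated estimate.
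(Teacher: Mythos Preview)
Your proposal is correct and follows essentially the same strategy as the paper's proof: level-by-level $\dot H^k$ energy identities for $0\le k\le m$, the standard pressure/continuity cancellation, commutator estimates (Lemma~\ref{big lemma}(iv)) for $u\cdot\nabla u$ and $\nabla\cdot(\eta u)$, and absorption of the dangerous top-order $\eta\,\partial^m\nabla\cdot u$ piece into the $u$-dissipation via Young's inequality. The one organizational difference is your treatment of $(u+{\rm e}_1)\cdot\nabla((u+{\rm e}_1)\cdot\nabla u)$: you set $A=u+{\rm e}_1$, extract $-\|A\cdot\nabla\partial^k u\|_{L^2}^2$ from a single integration by parts on the principal part $A_iA_j\partial_i\partial_j u$, and lump all remainders as Leibniz/coefficient-derivative commutators; the paper instead expands into the four pieces $VI_1,\dots,VI_4$, obtains $-\|u\cdot\nabla\partial^k u\|_{L^2}^2$ and $-\|{\rm e}_1\cdot\nabla\partial^k u\|_{L^2}^2$ separately, and controls the two cross terms $-\int(u\cdot\nabla\partial^k u)\cdot({\rm e}_1\cdot\nabla\partial^k u)\,dx$ by $\tfrac12\|u\cdot\nabla\partial^k u\|_{L^2}^2+\tfrac12\|{\rm e}_1\cdot\nabla\partial^k u\|_{L^2}^2$. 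Since $\|A\cdot\nabla\partial^k u\|_{L^2}^2=\|u\cdot\nabla\partial^k u\|_{L^2}^2+2\int(u\cdot\nabla\partial^k u)\cdot({\rm e}_1\cdot\nabla\partial^k u)\,dx+\|{\rm e}_1\cdot\nabla\partial^k u\|_{L^2}^2$, the two decompositions are algebraically equivalent, your packaging being slightly more compact. (Minor remark: a single integration by parts suffices to extract $-\|A\cdot\nabla\partial^k u\|_{L^2}^2$, not a double one.)
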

\begin{proof}
Taking the $L^2$ inner product of the system \eqref{original} with $(u,\eta)$ gives
\begin{equation*}
	\begin{split}
		\frac{1}{2}&\frac{d}{dt}\|(u,\eta)\|^2_{L^2}+\|\nabla u\|^2_{L^2}+\|\nabla\cdot u\|^2_{L^2}+2\|\bar{u}\|^2_{L^2}+\|u\|^4_{L^4}\\
		&=-\int (u\cdot\nabla u)\cdot u \,dx -3\int \bar{u}|u|^2\,dx-\int\eta\nabla\cdot(\eta u)\,dx\\
		&\quad +\int (u\cdot\nabla (u\cdot \nabla u)+u\cdot \nabla ({\rm e}_1\cdot\nabla u)+{\rm e}_1\cdot \nabla (u\cdot\nabla u)+{\rm e}_1\cdot \nabla ({\rm e}_1\cdot\nabla u))\cdot u \,dx\\
		&=: I+II+III+VI,
	\end{split}
\end{equation*}
where we use \eqref{L2 esimate for sm} to find
\begin{equation*}
\begin{split}
	I+II+III& \leq  C\|(u,\eta)\|_{H^{d-2}}(\|\bar{u}\|^2_{L^2}+\|\nabla u\|^2_{L^2}+\|\nabla \eta\|^2_{L^2}).
\end{split}	
\end{equation*}
For $IV$, we use the integration by parts to get
\begin{equation*}
	\begin{split}
		IV&=\int (u\cdot\nabla (u\cdot \nabla u)+u\cdot \nabla ({\rm e}_1\cdot\nabla u)+{\rm e}_1\cdot \nabla (u\cdot\nabla u)+{\rm e}_1\cdot \nabla ({\rm e}_1\cdot\nabla u))\cdot u \,dx\\
		& \leq C(\|u\|_{L^\infty}+\|u\|^2_{L^\infty})\|\nabla u\|^2_{L^2}-\int |{\rm e}_1\cdot\nabla u|^2 \,dx,
	\end{split}
\end{equation*}
and this yields
\begin{equation*}
	\frac{1}{2}\frac{d}{dt}\|(u,\eta)\|^2_{L^2}+\|\nabla u\|^2_{L^2}+\|\bar{u}\|^2_{L^2}+\|u\|^4_{L^4}\leq C\|(u,\eta)\|_{H^3}\left(\|\nabla (u,\eta)\|^2_{L^2}+\|\bar u\|^2_{L^2}\right).
\end{equation*}
For $k=1,\dots, m$, we get
\begin{equation}\label{repeat}
	\begin{split}
		\frac{1}{2}&\frac{d}{dt}\|\partial^k (u, \eta)\|^2_{L^2}+\|\partial^k\nabla u\|^2_{L^2}+\|\partial^k\nabla\cdot u\|^2_{L^2}+2\|\partial^k   \bar{u}\|^2_{L^2}\\
		&=-\int \partial^k(u \cdot\nabla u)\partial^k u\,dx-\int \partial^k(u |u|^2)\partial^k u \,dx-\int \partial^k( |u|^2)\partial^k\bar{u} \,dx\\
		&\quad -2\int \partial^k(\bar{u} u)\partial^k u \,dx- \int \partial^k\nabla\cdot(\eta  u)\partial^k \eta \,dx+\int \partial^k ((u+{\rm e}_1)\cdot\nabla ((u+{\rm e}_1)\cdot \nabla u))\partial^k u\,dx\\
		&=:I+II+III+IV+V+VI.
		\end{split}
\end{equation}
Here we use almost the same argument as in the proof of Lemma \ref{priori for parabolic} to obtain
\begin{equation*}
\begin{split}
		I\ls \|\nabla u\|_{L^\infty}\|\partial^k u\|^2_{L^2}, \quad II\ls \|u\|^2_{L^\infty}\|\partial^ku\|^2_{L^2},  \ \ \ \text{and} \  \  \  III+IV\ls \|u\|_{L^\infty}\|\partial^ku\|^2_{L^2}.
\end{split}
\end{equation*}
For the estimate of $V$, we apply Lemma \ref{big lemma} (iv) to show
\begin{equation}\label{term v}
\begin{split}
		V&=- \int \partial^k\nabla\cdot(\eta  u)\partial^k \eta \,dx\cr
		&=-\int \partial^k\eta (\partial^k(\nabla\eta\cdot u)+\partial^k(\eta\nabla\cdot u)) \,dx\\
	&= -\int \partial^k\eta [\partial^k,u\cdot\nabla]\eta \,dx-\int \partial^k\eta [\partial^k,\eta\nabla\cdot]u \,dx-\int \partial^k\eta (\partial^k\nabla\eta)\cdot u \,dx-\int (\partial^k\eta) \eta\partial^k\nabla\cdot u\,dx\\
	&\leq C\left(\|\nabla u \|_{L^\infty}\|\partial^k\eta\|_{L^2}+\|\nabla \eta\|_{L^\infty}\|\partial^k u\|_{L^2}\right)\|\partial^k \eta\|_{L^2}\\
	&\quad +C\|\nabla u \|_{L^\infty}\|\partial^k\eta\|^2_{L^2}+C\|\eta\|^2_{L^\infty}\|\partial^k \eta\|^2_{L^2}+\frac{1}{32}\|\partial^k\nabla u\|^2_{L^2}\\
	& \leq C(\|(u, \eta)\|_{H^3}+\|\eta\|^2_{H^3})\|\partial^k\eta\|^2_{L^2}+\frac{1}{32}\|\partial^k\nabla u\|^2_{L^2}.
	\end{split}
\end{equation}
We next express the term $VI$ as
\begin{equation*}
	\begin{split}
		VI&=\int \partial^k (u\cdot\nabla (u\cdot \nabla u))\partial^k u \,dx+\int \partial^k (u\cdot \nabla ({\rm e}_1\cdot\nabla u))\partial^k u \,dx\\
		&\quad +\int \partial^k({\rm e}_1\cdot \nabla (u\cdot\nabla u))\partial^k u \,dx+\int \partial^k ({\rm e}_1\cdot \nabla ({\rm e}_1\cdot\nabla u))\partial^k u \,dx\\
		&=:VI_1+VI_2+VI_3+VI_4,
	\end{split}
\end{equation*}
where we further split the term $VI_1$ into five terms as:
\begin{equation*}
	\begin{split}
		VI_1&= \int ([\partial^{k}, u\cdot\nabla] (u\cdot \nabla u))\cdot\partial^k u \,dx+\int (u \cdot\nabla \partial^k (u\cdot \nabla u))\cdot\partial^k u \,dx\\
		&=\int ([\partial^{k}, u\cdot\nabla] (u\cdot \nabla u))\cdot \partial^k u \,dx-\int (u \cdot\nabla\partial^k u)\cdot \partial^k (u\cdot \nabla u)\,dx-\int \nabla\cdot u \partial^k u\cdot\partial^k(u\cdot\nabla u)\,dx\\
		&=\int ([\partial^{k}, u\cdot\nabla] (u\cdot \nabla u))\cdot\partial^k u \,dx-\int (u \cdot\nabla\partial^k u)\cdot ([\partial^k, u\cdot \nabla] u) \,dx-\int (u \cdot\nabla\partial^k u)\cdot  (u\cdot \nabla\partial^k u) \,dx\\
		&\quad -\int \nabla\cdot u \partial^k u\cdot ([\partial^k,u\cdot\nabla] u) \,dx-\int \nabla\cdot u \partial^k u\cdot(u\cdot\nabla\partial^k u) \,dx\\
		&=:VI_1^1+VI_1^2+VI_1^3+VI_1^4+VI_1^5.
	\end{split}
\end{equation*}
Then we use Lemma \ref{big lemma} to estimate 
\begin{equation*}
	\begin{split}
		VI_1^1 & \ls\left(\|\nabla u\|_{L^\infty}\|\partial^k(u\cdot \nabla u )\|_{L^2}+\|\nabla(u\cdot \nabla u)\|_{L^{2d}}\|\partial^ku\|_{L^{\frac{2d}{d-1}}} \right)\|\partial^ku\|_{L^2}\\
		& \ls \|\nabla u\|_{L^\infty}\left(\|[\partial^k, u\cdot\nabla]u\|_{L^2}+\|u\|_{L^\infty}\|\nabla \partial^ku\|_{L^2}\right)\|\partial^k u\|_{L^2}\\
		&\quad + \left(\|[\partial,u\cdot \nabla] u\|_{L^{2d}}+\|u\|_{L^\infty}\|\partial^2u\|_{L^{2d}}\right)\|\partial^ku\|_{L^{\frac{2d}{d-1}}} \|\partial^ku\|_{L^2}\\
		& \ls \|\nabla u\|_{L^\infty}\left(\|\nabla u \|_{L^\infty}\|\partial^k u\|_{L^2}+\|u\|_{L^\infty}\|\nabla \partial^ku\|_{L^2}\right)\|\partial^k u\|_{L^2}\\
		& \quad +\left(\|[\partial,u\cdot \nabla] u\|_{L^{2d}}+\|u\|_{L^\infty}\|\partial^2u\|_{L^{2d}}\right)\|\partial^ku\|_{L^{\frac{2d}{d-1}}} \|\partial^ku\|_{L^2}\\
		& \leq  C\left(\|\nabla u\|^2_{L^\infty}+\| u \|^2_{L^\infty}\right)\|\partial^k u\|^2_{L^2}+\frac{1}{128}\|\nabla \partial^ku\|_{L^2}\\
		&\quad   + C\left(\|\nabla u\|_{L^\infty}\|\nabla u\|_{L^{2d}}+\|u\|_{L^\infty}\|\partial^2u\|_{L^{2d}}\right)^\frac{4}{3}\|\partial^k u\|^2_{L^2},   \cr 
		VI_1^2+VI_1^4+VI_1^5& \ls \|u\|_{L^\infty}\|\nabla\partial^k u\|_{L^2}\|\nabla u\|_{L^\infty}\|\partial^k u\|_{L^2}+\|\nabla u\|^2_{L^\infty}\|\partial^k u\|^2_{L^2}\\
		& \leq C \left(\|u\|^2_{L^\infty}\|\nabla u\|^2_{L^\infty}+\|\nabla u\|^2_{L^\infty}\right)\|\partial^k u\|^2_{L^2}+\frac{1}{128}\|\nabla\partial^k u\|^2_{L^2},
	\end{split}
\end{equation*}
and
\begin{equation*}
	\begin{split}
		VI_1^3 =-\|u\cdot\nabla\partial^k u\|^2_{L^2}.	
	\end{split}
\end{equation*}
Summing those estimates deduces
\begin{equation*}
\begin{split}
	VI_1&\leq C\left(\|u\|^2_{L^\infty}+\|\nabla u\|^2_{L^\infty}+\|u\|^2_{L^\infty}\|\nabla u\|^2_{L^\infty}+\|\nabla u\|^\frac{4}{3}_{L^\infty}\|\nabla u\|^\frac{4}{3}_{L^{2d}}+\|u\|^\frac{4}{3}_{L^\infty}\|\partial^2u\|^\frac{4}{3}_{L^{2d}}\right)\|\partial^k u\|^2_{L^2}\\
	 &\quad +\frac{1}{64}\|\nabla\partial^k u\|^2_{L^2}-\|u\cdot\nabla\partial^k u\|^2_{L^2}.	
\end{split}	
\end{equation*}
By using similar arguments as the above, we also obtain
\begin{equation*}
	\begin{split}
		VI_2&= \int ([\partial^k,u\cdot\nabla]({\rm e}_1\cdot\nabla u))\cdot \partial^ku \,dx+\int (u\cdot \nabla ({\rm e}_1\cdot\nabla \partial^k u))\cdot\partial^k u\,dx\\
		&=\int ([\partial^k,u\cdot\nabla]({\rm e}_1\cdot\nabla u))\cdot \partial^ku \,dx-\int (u\cdot\nabla\partial^ku)\cdot({\rm e}_1\cdot\nabla\partial^ku) \,dx-\int \nabla\cdot u ({\rm e}_1\cdot\nabla \partial^k u)\cdot\partial^k u\,dx\\
		& \leq C\left(\|\nabla u\|_{L^\infty}\|\nabla\partial^k u\|_{L^2}+\|\partial^2 u\|_{L^{2d}}\|\partial^k u\|_{L^\frac{2d}{d-1}}\right)\|\partial^ku\|_{L^2}-\int (u\cdot\nabla\partial^ku)\cdot({\rm e}_1\cdot\nabla\partial^ku) \,dx\\
		& \leq  C(\|\nabla u\|^2_{L^\infty}+\|\partial^2u\|^\frac{4}{3}_{L^{2d}})\|\partial^k u\|^2_{L^2}+\frac{1}{128}\|\nabla\partial^k u\|^2_{L^2} +\frac{1}{2}\|u\cdot\nabla\partial^k u\|^2_{L^2}+\frac{1}{2}\|{\rm e}_1\cdot\nabla\partial^ku\|^2_{L^2}, \cr
		VI_3 &=-\int ({\rm e}_1\cdot \nabla \partial^k u)\cdot([\partial^k,u\cdot\nabla] u) \,dx-\int ({\rm e}_1\cdot\nabla\partial^k u)\cdot(u\cdot\nabla\partial^k u) \,dx\\
		& \leq  C\|\nabla u\|^2_{L^\infty}\|\partial^k u\|^2_{L^2}+\frac{1}{128}\|\nabla\partial^{k}u\|^2_{L^2}+\frac{1}{2}\|u\cdot\nabla\partial^k u\|^2_{L^2}+\frac{1}{2}\|{\rm e}_1\cdot\nabla\partial^ku\|^2_{L^2},
	\end{split}
\end{equation*}
and 
\begin{equation*}
	\begin{split}
		VI_4 =-\|{\rm e}_1\cdot\nabla\partial^k u\|^2_{L^2}.
	\end{split}
\end{equation*}
Combining the above estimates, we get
\begin{equation}\label{last recall}
	\begin{split}
		VI &\leq   C(\|u\|^2_{L^\infty}+\|\nabla u\|^2_{L^\infty}+\|u\|^2_{L^\infty}\|\nabla u\|^2_{L^\infty}+\|\nabla u\|^\frac{4}{3}_{L^\infty}\|\partial^2u\|^\frac{4}{3}_{L^2} )\|\partial^k u\|^2_{L^2}\cr
		&\quad + C\left( \|u\|^\frac{4}{3}_{L^\infty}\|\partial^2u\|^\frac{4}{3}_{L^{2d}} +\|\partial^2u\|^\frac{4}{3}_{L^{2d}}\right)\|\partial^k u\|^2_{L^2} +\frac{1}{32}\|\nabla\partial^k u\|^2_{L^2} \cr
		&\leq C(\|u\|_{H^3} + \|u\|_{H^3}^4)\|\partial^k u\|^2_{L^2}  + \frac{1}{32}\|\nabla\partial^k u\|^2_{L^2}. 
	\end{split}
\end{equation}
Putting this all together concludes
\[
\frac{1}{2}\frac{d}{dt}\|(u,\eta)\|^2_{H^m}+\frac{3}{4} \|\nabla u\|^2_{H^m}+2\|\bar{u}\|^2_{H^m}\leq C\left(\|(u,\eta)\|^4_{H^3}+\|(u,\eta)\|_{H^3}\right)\left(\|\nabla (u, \eta)\|^2_{H^{m-1}}+\|\bar{u}\|^2_{L^2}\right),
\]
where $C>0$ is indenepdent of $t$.
\end{proof}

In order to close the energy estimates $\|(u,\eta)\|^2_{H^m}$, we need to have an appropriate dissipation rate for $\eta$. For this, we provide the following {\it hypocoercivity}-type estimate. 
\begin{lemma}\label{hyper} Let the assumptions of Lemma \ref{global tt} be satisfied. Then there exists $C>0$ independent of $t$ such that 
\begin{align*} 
	\frac{d}{dt}\int \partial^k u\cdot \partial^k \nabla \eta \,dx+\frac{3}{4}\|\partial^k\nabla\eta\|^2_{L^2}\leq C\left(\|(u,\eta)\|^4_{H^3}+1\right)\|\partial^k \nabla u\|^2_{H^1} + C\|\pa^k \bar u\|_{L^2}^2
\end{align*}
for $k=0,\dots,m-1$.
	\end{lemma}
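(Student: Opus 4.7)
The plan is to introduce the hypocoercivity-type functional $F_k(t) := \int \pa^k u \cdot \pa^k \nabla \eta \, dx$, differentiate it in time using the two equations in \eqref{original}, and recognize that the pressure term $-\nabla \eta$ in the $u_t$-equation produces $-\int \pa^k \nabla \eta \cdot \pa^k \nabla \eta \, dx = -\|\pa^k \nabla \eta\|_{L^2}^2$; moving this term to the left-hand side furnishes the required dissipation for $\nabla \eta$. The task then reduces to bounding every remaining contribution by $\tfrac{1}{4}\|\pa^k \nabla \eta\|_{L^2}^2 + C(1+\|(u,\eta)\|_{H^3}^4)\|\pa^k \nabla u\|_{H^1}^2 + C\|\pa^k \bar u\|_{L^2}^2$, noting that $\|\pa^k \nabla u\|_{H^1}^2$ accommodates both $\|\pa^{k+1} u\|_{L^2}^2$ and $\|\pa^{k+2} u\|_{L^2}^2$.

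For the $u_t$ contribution I would process each term on the right-hand side of the first equation in \eqref{original} after applying $\pa^k$ and pairing with $\pa^k \nabla \eta$. The second-order linear pieces $\Delta u$ and $\nabla \mathrm{div}\, u$, via Cauchy--Schwarz and Young's inequality, yield a small multiple of $\|\pa^k \nabla \eta\|_{L^2}^2$ plus a constant times $\|\pa^{k+2} u\|_{L^2}^2$; the first-order linear term $-{\rm e}_1 \cdot \nabla u$ contributes similarly at the $\|\pa^{k+1} u\|_{L^2}^2$ level. The $\bar u$-type terms $-2\bar u {\rm e}_1$ and $-2\bar u u$ produce the $C\|\pa^k \bar u\|_{L^2}^2$ piece, with the $L^\infty$ prefactors bounded by $\|u\|_{H^3}$. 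The transport $-u\cdot\nabla u$ and the cubic nonlinearities $-|u|^2 u$, $-|u|^2 {\rm e}_1$ are handled by the product and commutator estimates of Lemma \ref{big lemma} (iii), (iv), with $L^\infty$ norms of low-order derivatives of $u$ all controlled by $\|u\|_{H^3}$, yielding powers of $\|(u,\eta)\|_{H^3}$.

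For the $\eta_t$ contribution I would integrate by parts once to rewrite $\int \pa^k u \cdot \pa^k \nabla \eta_t \, dx = -\int \pa^k(\nabla\cdot u)\,\pa^k \eta_t \, dx$ and then substitute $\eta_t = -\nabla\cdot(\eta u) - {\rm e}_1 \cdot \nabla \eta - \nabla\cdot u$. The $-\nabla\cdot u$ piece yields directly $\|\pa^k(\nabla\cdot u)\|_{L^2}^2 \leq \|\pa^{k+1}u\|_{L^2}^2$; the $-{\rm e}_1 \cdot \nabla \eta$ piece splits by Young's inequality into a small multiple of $\|\pa^k \nabla \eta\|_{L^2}^2$ plus a constant times $\|\pa^{k+1}u\|_{L^2}^2$; and the transport $\nabla\cdot(\eta u)$ is bounded by Lemma \ref{big lemma} (iii) and (iv), producing a factor proportional to $\|(u,\eta)\|_{H^3}$ times $\|\pa^{k+1}u\|_{L^2}^2 + \|\pa^k \nabla \eta\|_{L^2}^2$, whose $\|\pa^k \nabla \eta\|_{L^2}^2$ portion is absorbed into the left-hand side under smallness of $\|(u,\eta)\|_{H^3}$.

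The main obstacle is the highest-order nonlinear term $(u+{\rm e}_1)\cdot\nabla((u+{\rm e}_1)\cdot\nabla u)$, which on applying $\pa^k$ produces derivatives of $u$ of order up to $k+2$. Expanding it into the four terms $u\cdot\nabla(u\cdot\nabla u)$, $u\cdot\nabla({\rm e}_1\cdot\nabla u)$, ${\rm e}_1\cdot\nabla(u\cdot\nabla u)$, ${\rm e}_1\cdot\nabla({\rm e}_1\cdot\nabla u)$ as in the proof of Lemma \ref{global tt}, the purely linear piece ${\rm e}_1\cdot\nabla({\rm e}_1\cdot\nabla u) = \pa_1^2 u$ pairs with $\pa^k \nabla \eta$ to give, via Young's inequality, a term controlled by $\|\pa^{k+2} u\|_{L^2}^2$; the mixed and fully nonlinear pieces are estimated by combining Lemma \ref{big lemma} (iii), (iv) with $L^\infty$ bounds supplied by $\|u\|_{H^3}$, which is precisely what produces the $\|(u,\eta)\|_{H^3}^4$ prefactor. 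After gathering every contribution and choosing the Young's inequality parameters so that the total absorbed multiple of $\|\pa^k \nabla \eta\|_{L^2}^2$ stays below $\tfrac{1}{4}$, the stated inequality follows.
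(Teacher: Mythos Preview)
Your proposal is correct and follows essentially the same route as the paper: differentiate the cross term $\int \pa^k u\cdot\pa^k\nabla\eta\,dx$, extract $-\|\pa^k\nabla\eta\|_{L^2}^2$ from the pressure, integrate by parts in the $\eta_t$ contribution, expand the anisotropic diffusion $(u+{\rm e}_1)\cdot\nabla((u+{\rm e}_1)\cdot\nabla u)$ into four pieces, and control everything with the product/commutator estimates of Lemma~\ref{big lemma} and Young's inequality. One small correction: for the transport piece $\nabla\cdot(\eta u)$ you should not invoke smallness of $\|(u,\eta)\|_{H^3}$ to absorb the $\|\pa^k\nabla\eta\|_{L^2}^2$ contribution, since the lemma is stated unconditionally; instead apply Young's inequality with a fixed small parameter so that the prefactor $\|u\|_{L^\infty}^2$ lands on $\|\pa^k\nabla u\|_{L^2}^2$ (contributing to the $\|(u,\eta)\|_{H^3}^4$ coefficient), which is exactly what the paper does.
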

\begin{proof}
We begin with the zeroth-order estimate:
\begin{equation*}
	\begin{split}
		\frac{d}{dt}\int u\cdot\nabla\eta \,dx&=\int u_t \cdot \nabla \eta  +  u \cdot \nabla \eta_t \,dx=:I+II,
	\end{split}
\end{equation*}
where we first estimate $I$ as
    \begin{align*}
	I&=-\int (u\cdot\nabla u+\nabla \eta -\Delta u-\nabla\nabla\cdot u +{\rm e_1}\cdot\nabla u +|u|^2u+2\bar{u}u+2\bar{u}{\rm e_1}+|u|^2{\rm e_1}\\
	&\hspace{4cm} + u\cdot\nabla (u\cdot \nabla u)+u\cdot \nabla ({\rm e_1}\cdot\nabla u)+{\rm e_1}\cdot \nabla (u\cdot\nabla u)+{\rm e_1}\cdot \nabla ({\rm e_1}\cdot\nabla u)) \cdot \nabla\eta \,dx\\
	& \leq -\int |\nabla\eta|^2\,dx + C\|\nabla \eta\|_{L^2}\|\nabla u\|_{L^2}\|u\|_{L^\infty} + C\|\Delta u\|^2_{L^2}+\frac{1}{32}\|\nabla\eta\|_{L^2}^2 + C\|\nabla u\|^2_{L^2}\\
	&\quad +\frac{1}{32}\|\nabla\eta\|_{L^2} + C\|u\|_{L^\infty}\|u\|^2_{L^4}\|\nabla \eta\|_{L^2} + C\|u\|^4_{L^4}+\frac{1}{32}\|\nabla\eta \|^2_{L^2} + C\|\bar{u}\|_{L^2}^2 + \frac{1}{32}\|\nabla\eta\|_{L^2}^2\\
	&\quad + C(\|u\|^2_{L^\infty}\|\nabla u\|^2_{L^\infty}+\|\nabla u\|^2_{L^\infty})\|\nabla u\|^2_{L^2} + C(\| u\|^2_{L^\infty}+1)^2\|\Delta u\|^2_{L^2}\\
	& \leq -\frac{7}{8}\int |\nabla\eta|^2\,dx + C(\|u\|^2_{H^3}+1)^2\|\nabla u\|^2_{H^1} + C(\|u\|^2_{H^3}+1)\|u\|^4_{L^4} + C\|\bar{u}\|_{L^2}^2.
	\end{align*}
	
We next estimate 
\[
	II=-\int \nabla\cdot u\eta_t \,dx=\int\nabla\cdot u\nabla\cdot(\eta u) \,dx+\int\nabla\cdot u ({{\rm e}_1}\cdot \nabla\eta) \,dx 
+\int\nabla\cdot u\nabla\cdot u  \,dx=:II_{1}+II_{2}+II_{3}.
\]
Here we obtain
\[
	II_1 \leq \|\nabla u\|_{L^2}\|\nabla \eta\|_{L^2}\| u \|_{L^\infty}+\|\nabla u\|^2_{L^2}\|\eta\|_{L^\infty} \leq C(\|u\|^2_{H^3}+\|\eta\|_{H^3})\|\nabla u\|^2_{L^2}+\frac{1}{32}\|\nabla\eta\|^2_{L^2}
\]
and
\[
	II_2+II_3 \leq C\|\nabla u\|^2_{H^1}+\frac{1}{32}\|\nabla\eta\|^2_{L^2}.
\]
Summing these estimates yields that
\begin{align*}
	\frac{d}{dt}\int  u\cdot  \nabla \eta\,dx+\frac{3}{4}\|\nabla\eta\|^2_{L^2}\leq C((\|u\|^2_{H^3}+1)^2+\|\eta\|_{H^3})\| \nabla u\|^2_{H^1}+C\|\bar{u}\|^2_{L^2}.
\end{align*}

For the higher-order derivative estimates, we find that for $k=1,\dots, m-1$ 
\begin{equation*}
	\begin{split}
		\frac{d}{dt}\int \partial^k u\cdot\partial^k  \nabla\eta \,dx&=\int \partial^k u_t \cdot \partial^k\nabla \eta +\partial^k u \cdot \partial^k\nabla \eta_t\,dx=:III+IV.
	\end{split}
\end{equation*}
By Lemma \ref{big lemma} (iii) and (iv), we estimate $III$ as 
\begin{align*}
	III&=-\int \partial^k(u\cdot\nabla u+\nabla \eta -\Delta u-\nabla\nabla\cdot u +{\rm e}_1\cdot\nabla u +|u|^2u+2\bar{u}u+2\bar{u}{\rm e}_1+|u|^2{\rm e}_1\\
	&\hspace{8cm} +(u+{\rm e}_1)\cdot\nabla ((u+{\rm e}_1)\cdot \nabla u)) \cdot \partial^k\nabla\eta \,dx\\
	& \leq  -\int |\partial^k\nabla\eta|^2\,dx+C\left(\|\nabla u\|_{L^\infty}+\| u\|_{L^\infty}\right)\|\partial^k u\|_{H^1}\|\partial^k\nabla\eta\|_{L^2}+C\|\partial^k\nabla u\|^2_{H^1}\|\partial^k\nabla\eta\|_{L^2}\\
	&\quad +C\left(\|u\|^2_{L^\infty}+\|u\|_{L^\infty}\right)\|\partial^k u\|_{L^2}\|\partial^k\nabla \eta\|_{L^2}+C\|\partial^k \bar{u}\|_{L^2}\|\partial^k\nabla \eta\|_{L^2}\\
	&\quad +\int ((u+{\rm e}_1)\cdot\nabla ((u+{\rm e}_1)\cdot \nabla u))\cdot \partial^k\nabla\eta \,dx\\
	& \leq  -\frac{7}{8}\int |\partial^k\nabla\eta|^2\,dx+C\|\partial^k \bar{u}\|^2_{L^2}+(\|\nabla u\|^2_{L^\infty}+\|u\|^4_{L^\infty}+\|u\|^2_{L^\infty})\| \partial^k u\|^2_{H^1}+C\|\partial^k\nabla  u\|^2_{H^1}\\
	&\quad +\int ((u+{\rm e}_1)\cdot\nabla ((u+{\rm e}_1)\cdot \nabla u))\cdot \partial^k\nabla\eta \,dx.
\end{align*}
For the last term on the right-hand side of the above inequality, we divide it into four terms:
\begin{equation*}
	\begin{split}
		&\int ((u+{\rm e}_1)\cdot\nabla ((u+{\rm e}_1)\cdot \nabla u))\cdot\nabla\partial^k\eta \,dx\\
		&=\int \partial^k (u\cdot\nabla (u\cdot \nabla u))\cdot\nabla\partial^k \eta \,dx+\int \partial^k (u\cdot \nabla ({\rm e}_1\cdot\nabla u))\cdot\partial^k\nabla\eta \,dx\\
		&\quad +\int \partial^k({\rm e}_1\cdot \nabla (u\cdot\nabla u))\cdot\nabla\partial^k \eta \,dx+\int \partial^k ({\rm e}_1\cdot \nabla ({\rm e}_1\cdot\nabla u))\cdot\partial^k\nabla \eta \,dx\\
		&=:III_1+III_2+III_3+III_4.
	\end{split}
\end{equation*}
By using Lemma \ref{big lemma} (iv), we get
\begin{equation*}
	\begin{split}
		III_1&
		=\int ([\partial^{k}, u\cdot\nabla] (u\cdot \nabla u))\cdot\partial^k\nabla\eta \,dx+\int (u \cdot\nabla \partial^k (u\cdot \nabla u))\cdot\partial^k\nabla \eta \,dx\\
		&=\int ([\partial^{k}, u\cdot\nabla ](u\cdot \nabla u))\cdot\partial^k\nabla \eta \,dx
		+\int u_i (  [\partial_i \partial^k, u\cdot \nabla] u)\cdot\partial^k\nabla\eta \,dx+\int u_i(u\cdot\partial_i \nabla\partial^{k} u)\cdot   \partial^k\nabla\eta \,dx\\
		&\leq C\left(\|\nabla u\|_{L^\infty}\|\partial^k(u\cdot \nabla u )\|_{L^2}+\|\nabla(u\cdot \nabla u)\|_{L^{2d}}\|\partial^ku\|_{L^{\frac{2d}{d-1}}} \right)\|\partial^k\nabla\eta\|_{L^2}\\
		 &\quad +C\|u\|_{L^\infty}\|\nabla u\|_{L^\infty}\|\partial^k\nabla u\|_{L^2}\|\partial^k\nabla \eta\|_{L^2}+\|u\|^2_{L^\infty}\|\partial^{k+2} u\|_{L^2}\|\partial^k\nabla \eta\|_{L^2}\\
		 &\leq C\|\nabla u\|^2_{L^\infty}\left(\|[\partial^k,u\cdot \nabla] u )\|_{L^2}+\|u\|_{L^\infty}\|\partial^k\nabla u\|_{L^2}\right)^2\\
		 &\quad +C\left(\|[\partial,u\cdot \nabla] u\|_{L^{2d}}+\|u\|_{L^\infty}\|\partial^2u\|_{L^{2d}}\right)^2\left(\|\partial^ku\|^2_{L^2}+\|\partial^k\nabla u\|^2_{L^2}\right)\\
		 &\quad +C\|u\|^4_{H^3}\|\partial^k\nabla u\|^2_{H^1}
		 +\frac{1}{64}\|\partial^k\nabla \eta\|^2_{L^2}\\
		 &\leq C(\|\nabla u\|^2_{L^\infty}+\| u\|^2_{L^\infty})\|u\|^2_{H^3}\|\partial^k u\|^2_{H^1}+C\|u\|^4_{H^3}\|\partial^k\nabla u\|^2_{H^1}
		 +\frac{1}{64}\|\partial^k\nabla \eta\|^2_{L^2},\cr
		III_2&= 
		\int ([\partial^k,u\cdot\nabla]({\rm e}_1\cdot\nabla u))\cdot \partial^k\nabla\eta\, dx+\int (u\cdot\nabla({\rm e}_1\cdot\nabla\partial^k u)) \partial^k\nabla\eta \,dx\\
		&\leq C\left(\|\nabla u\|_{L^\infty}\|\partial^k\nabla u\|_{L^2}+\chi_{k\geq2}\|\partial^2 u\|_{L^{2d}}\|\partial^k u\|_{L^{\frac{2d}{d-1}}}\right)\|\partial^k\nabla\eta\|_{L^2}  +C\|u\|^2_{H^3}\|\partial^k\nabla u\|^2_{H^1}+\frac{1}{64}\|\partial^k\nabla\eta\|^2_{L^2}\\
		&\leq C( \|\nabla u\|^2_{L^\infty}+\chi_{k\geq2}\|\partial^2u\|^2_{L^{2d}})\|\partial^k u\|^2_{H^1}+C\|u\|^2_{H^3}\|\partial^k\nabla u\|^2_{H^1}+\frac{1}{64}\|\partial^k\nabla\eta\|^2_{L^2},	\cr
		III_3
		&=\int ([\partial_1\partial^k, u\cdot\nabla] u) \cdot\partial^k \nabla  \eta \,dx+\int (u\cdot\nabla\partial_1\partial^k u)\cdot\partial^k\nabla\eta \,dx\\
		&\leq  C\|\nabla u\|^2_{L^\infty}\|\partial^k\nabla u\|^2_{L^2}+C\|u\cdot\partial_1\partial^k\nabla u\|^2_{L^2}+\frac{1}{64}\|\partial^k\nabla\eta\|^2_{L^2},
    \end{split}
\end{equation*}
and 
\begin{equation*}
	\begin{split}
		III_4=\int \partial^k ({\rm e}_1\cdot \nabla ({\rm e}_1\cdot\nabla u))\cdot\nabla\partial^k \eta  \,dx\leq C\| \partial^{k+2} u\|^2_{L^2}+\frac{1}{64}\|\partial^k\nabla \eta\|^2_{L^2}.
	\end{split}
\end{equation*}
Summing those estimates and using Lemma \ref{big lemma} (vi), Lemma \ref{lem_gd1} (ii), (iii), and (v), we have
\begin{equation}\label{hypo damping}
	\begin{split}
		III&\leq  -\frac{7}{8}\|\partial^k\nabla\eta\|^2_{L^2}+ C( \|\nabla u\|^2_{L^\infty}+\| u\|^2_{L^\infty}+\chi_{k\geq2}\|\partial^2u\|^2_{L^{2d}})(\|u\|^2_{H^3}+1)\|\partial^k u\|^2_{L^2}\\
		&\quad +C(\|u\|^4_{H^3}+1)\|\partial^k\nabla u\|^2_{H^1}+C\|\partial^k\bar{u}\|^2_{L^2}+\frac{1}{16}\|\partial^k\nabla\eta\|^2_{L^2}\\
		&\leq  -\frac{7}{8}\|\partial^k\nabla\eta\|^2_{L^2}+ C( \| u\|^2_{H^\frac{(d-2)(k+2)}{2k}}+\|u\|^2_{H^3})(\|u\|^2_{H^3}+1)\|\partial^k\nabla u\|^2_{H^1}\\
		&\quad +C(\|u\|^4_{H^3}+1)\|\partial^k\nabla u\|^2_{H^1}+C\|\partial^k\bar{u}\|^2_{L^2}+\frac{1}{16}\|\partial^k\nabla\eta\|^2_{L^2}\\
		&\leq  -\frac{7}{8}\|\partial^k\nabla\eta\|^2_{L^2}+C(\|u\|^4_{H^3}+1)\|\partial^k\nabla u\|^2_{H^1}+C\|\partial^k\bar{u}\|^2_{L^2}+\frac{1}{16}\|\partial^k\nabla\eta\|^2_{L^2}.
	\end{split}
\end{equation}
Finally, we estimate $IV$ as 
\begin{align*}
    IV&=\int\partial^k\nabla\cdot u\partial^k\nabla\cdot(\eta u)\,dx +\int\partial^k\nabla\cdot u {\rm e}_1\cdot \nabla\partial^k\eta \,dx 
+\int\nabla\cdot\partial^k u\nabla\cdot\partial^k u\,dx =: IV_1+ IV_2+ IV_3,
\end{align*}
where
\begin{align*}
	 IV_1 &\ls \|\partial^k \nabla u\|_{L^2}\left(\|\eta\|_{L^\infty}\|\partial^k\nabla u\|_{L^2}+\|u\|_{L^\infty}\|\partial^k\nabla \eta\|_{L^2}\right) \leq  C\left(\|(u,\eta)\|^2_{H^3}+1\right)\|\partial^k\nabla u\|^2_{L^2}+\frac{1}{32}\|\partial^k\nabla \eta\|^2_{L^2}
\end{align*}
and
\begin{align*}
	 IV_2+ IV_3\leq C\|\partial^k\nabla u\|^2_{L^2}+\frac{1}{32}\|\nabla\partial^k\eta\|^2_{L^2}.
\end{align*}
Thus, we find
\begin{equation}\label{hypo damping2}
	\begin{split}
		 IV\leq C\left(\|(u,\eta)\|^2_{H^3}+1\right)\|\partial^k\nabla u\|^2_{L^2}+\frac{1}{16}\|\partial^k\nabla \eta\|^2_{L^2}.
	\end{split}
\end{equation}
We then combine \eqref{hypo damping} and \eqref{hypo damping2} to conclude
\begin{align*} 
	\frac{d}{dt}\int \partial^k u\cdot \partial^k \nabla \eta \,dx+\frac{3}{4}\|\partial^k\nabla\eta\|^2_{L^2}\leq C\left(\|(u,\eta)\|^4_{H^3}+1\right)\|\partial^k \nabla u\|^2_{H^1}+ C\|\partial^k\bar{ u}\|^2_{L^2}
\end{align*}
for some $C>0$ independent of $t$. This completes the proof.
\end{proof}

\begin{proof}[Proof of Theorem \ref{tt result}]
We first notice that there exists a positive constant $C_0 > 0$ such that
\[
C_0\| (u, \eta)\|_{H^m}^2 \leq \| (u, \eta)\|_{H^m}^2+\delta_0\sum^{m-1}_{k=0}\int \partial^ku\cdot\nabla\partial^k\eta \,dx \leq \frac1{C_0}\| (u, \eta)\|_{H^m}^2
\]
for $\delta_0 > 0$ small enough. On the other hand, it follows from Lemmas \ref{global tt} and \ref{hyper} that
\begin{align*}
	&\frac{d}{dt}\left(\| (u, \eta)\|_{H^m}^2 + \delta_0\sum^{m-1}_{k=0}\int \partial^ku\cdot\partial^k\nabla\eta \,dx\right)+\frac{3}{4}\delta_0\|\nabla\eta\|^2_{H^{m-1}}+\|\nabla u\|^2_{H^m}+\|\bar{u}\|^2_{H^m}\\
	&\quad \leq  C\delta_0(\|(u,\eta)\|_{H^3}+1)^2\|\nabla u\|_{H^m}+ C\left(\|(u,\eta)\|^4_{H^3}+\|(u,\eta)\|_{H^3}\right)\|\nabla (u, \eta)\|^2_{H^{m}}+C\delta_0\|\bar{u}\|^2_{H^m}.
\end{align*}
If we assume that
\begin{equation*}
	\sup_{0 \leq t \leq T}\|(u,\eta)(t)\|_{H^3}\ll 1,
\end{equation*}
then we get
\begin{align*}
	&\left(\|( u, \eta)\|_{H^m}^2+\delta_0\sum^{m-1}_{k=0}\int \partial^ku\cdot\partial^k\nabla\eta \,dx\right) + \frac12\int^t_0 \lt(\delta_0\|\nabla\eta (s)\|^2_{H^{m-1}} + \|\nabla u(s)\|^2_{H^m} + \|\bar{u}(s)\|^2_{H^m}\rt)ds\\
	&\quad \leq  C\| (u_0,\eta_0)\|_{H^m}.
\end{align*}
Hence, we conclude that 
\begin{equation*}
	\begin{split}
		&\|(u,\eta)(t)\|^2_{H^m}+\int^t_0 \left(\|\nabla (u,\eta)(s)\|^2_{H^{m-1}} +\|\nabla u(s)\|_{\dot H^m}^2 +\|\bar{u}(s)\|^2_{H^m}\right)ds\leq C\|(u_0,\eta_0)\|^2_{H^m}
	\end{split}
\end{equation*}
for all $T\geq t\geq0$ and some $C>0$ independent of $t$. From the above estimates together with the local-in-time existence theory, we conclude the desired global-in-time existence of solutions.
\end{proof}

%
%
%
%
%
%
%
\subsection{Large-time behavior}
In this part, we discuss the large-time behavior of solutions constructed in Theorem \ref{tt result}.

In parallel with Section \ref{sec_lt_pptt}, we begin with the estimate of the negative Sobolev norm of solutions.
\begin{lemma}\label{hypo decay} Let $0<s<\frac{d}{2}$,  $T>0$, and the assumptions of Lemma \ref{global tt} be satisfied. If we further assume that $u,\eta \in C([0,T]; \dot{H}^{-s})$ and $u \in L^2(0,T; \dot H^{1-s})$, then there exists $C>0$ independent of $T$ such that
\begin{equation*}
	\begin{split}
		&\frac{1}{2}\frac{d}{dt}\|  (u,\eta)\|^2_{\dot{H}^{-s}} + \| \nabla u\|^2_{\dot{H}^{-s}}+\|\bar{u}\|^2_{\dot{H}^{-s}}  \leq  C  (\|(u,\eta)\|_{H^2\cap\dot{H}^{-s}}+\|(u, \eta)\|^2_{H^2\cap\dot{H}^{-s}})\|\nabla (u,\eta)\|^2_{H^2\cap \dot{H}^{-s}}+\frac{1}{2}\|\bar{u}\|^2_{L^2} .
	\end{split}
\end{equation*}
\end{lemma}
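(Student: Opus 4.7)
The plan is to mirror the proof of Lemma \ref{lem_ns_pptt} for the PPTT model, adapted to the two structural differences in \eqref{original}: the absence of $\Delta\eta$ dissipation on the $\eta$-equation (hence no $\|\nabla\eta\|^2_{\dot{H}^{-s}}$ on the LHS) and the presence of the extra nonlinearity $(u+{\rm e}_1)\cdot\nabla((u+{\rm e}_1)\cdot\nabla u)$. Concretely, I would apply $\Lambda^{-s}$ to both equations of \eqref{original} and take $L^2$ inner products with $\Lambda^{-s}u$ and $\Lambda^{-s}\eta$ respectively, then sum. Integration by parts kills the two convective pieces $\int \Lambda^{-s}({\rm e}_1\cdot\nabla u)\cdot\Lambda^{-s}u\,dx$ and $\int \Lambda^{-s}({\rm e}_1\cdot\nabla\eta)\,\Lambda^{-s}\eta\,dx$, and produces the usual pressure cancellation between $\Lambda^{-s}\nabla\eta\cdot \Lambda^{-s}u$ and $\Lambda^{-s}\nabla\cdot u\,\Lambda^{-s}\eta$. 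The dissipative linear operators together with $-2\bar u{\rm e}_1$ and the diagonal piece ${\rm e}_1\cdot\nabla({\rm e}_1\cdot\nabla u)=\partial_1^2u$ leave
\[
\tfrac12\tfrac{d}{dt}\|(u,\eta)\|_{\dot H^{-s}}^2+\|\nabla u\|_{\dot H^{-s}}^2+\|\nabla\cdot u\|_{\dot H^{-s}}^2+\|\partial_1 u\|_{\dot H^{-s}}^2+2\|\bar u\|_{\dot H^{-s}}^2
\]
on the LHS (the last two nonnegative terms can be retained or discarded to match the stated inequality).

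For the nonlinear terms shared with the PPTT case, namely $u\cdot\nabla u$, $|u|^2u$, $\bar u u$, $|u|^2{\rm e}_1$, and $\nabla\cdot(\eta u)$, I would use exactly the estimates of Lemma \ref{lem_ns_pptt}, based on Hardy–Littlewood–Sobolev (Lemma \ref{big lemma}(v)) to replace $\|\Lambda^{-s}(\cdot)\|_{L^2}$ by an $L^{1/(1/2+s/d)}$-norm, Hölder, and the Sobolev embedding $\|u\|_{L^{d/s}}\lesssim\|u\|_{\dot H^{d/2-s}}$ (Lemma \ref{big lemma}(vi)). The same absorption trick handles the cubic $|u|^2{\rm e}_1$ piece and yields the remainder $\frac12\|\bar u\|_{L^2}^2$ on the RHS. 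The key replacement relative to the PPTT proof is the interpolation $\|u\|_{\dot H^{d/2-s}}\leq\|\nabla u\|_{H^2\cap \dot H^{-s}}$, which is valid because $1-s\leq d/2-s\leq 3$ for $d\in\{2,3\}$; this is precisely why the statement uses $H^2\cap\dot H^{-s}$ rather than $H^{d-2}\cap\dot H^{-s}$.

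The new work is in bounding the three nonlinear pieces $u\cdot\nabla(u\cdot\nabla u)$, $u\cdot\nabla(\partial_1 u)$, and $\partial_1(u\cdot\nabla u)$ arising from expanding $(u+{\rm e}_1)\cdot\nabla((u+{\rm e}_1)\cdot\nabla u)$. Each carries two derivatives of $u$, which is the main reason we need $H^2$ rather than $H^{d-2}$ regularity in the smallness condition. I would estimate, for instance,
\[
\bigl\|\Lambda^{-s}\bigl(u\cdot\nabla(u\cdot\nabla u)\bigr)\bigr\|_{L^2}\lesssim \|u\|_{\dot H^{d/2-s}}\bigl(\|\nabla u\|_{L^\infty}\|\nabla u\|_{L^2}+\|u\|_{L^\infty}\|\nabla^2 u\|_{L^2}\bigr),
\]
and similarly for the $u\cdot\nabla(\partial_1 u)$ piece; for $\partial_1(u\cdot\nabla u)$ I would first integrate by parts to transfer $\partial_1$ onto $\Lambda^{-s}u$, which saves one derivative. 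Each factor then fits into the template $\|(u,\eta)\|_{H^2\cap\dot H^{-s}}\cdot\|\nabla(u,\eta)\|_{H^2\cap\dot H^{-s}}^2$ after using $\|u\|_{\dot H^{d/2-s}}\leq\|\nabla u\|_{H^2\cap\dot H^{-s}}$ and the Sobolev embeddings $\|u\|_{L^\infty},\|\nabla u\|_{L^\infty}\lesssim \|u\|_{H^3}\lesssim\|(u,\eta)\|_{H^2\cap\dot H^{-s}}+\|\nabla(u,\eta)\|_{H^2\cap \dot H^{-s}}$.

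The main obstacle is the bookkeeping: verifying that every one of these cross terms fits the precise template "first power of the smallness norm $\|(u,\eta)\|_{H^2\cap\dot H^{-s}}$ (or its square) times the dissipation $\|\nabla(u,\eta)\|_{H^2\cap\dot H^{-s}}^2$" without ever needing $\|\nabla\eta\|_{\dot H^{-s}}^2$ on the LHS (which is unavailable due to the missing $\Delta\eta$). This is the reason for the asymmetric hypothesis $u\in L^2(0,T;\dot H^{1-s})$ but merely $\eta\in C([0,T];\dot H^{-s})$, and it is the only delicate point of the argument.
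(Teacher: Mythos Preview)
Your proposal is correct and follows essentially the same approach as the paper's proof: apply $\Lambda^{-s}$, take $L^2$ inner products, reuse the five PPTT estimates from Lemma \ref{lem_ns_pptt} verbatim for $I$--$V$, and then handle the four pieces of $(u+{\rm e}_1)\cdot\nabla((u+{\rm e}_1)\cdot\nabla u)$ separately, with the diagonal piece $\partial_1^2 u$ producing the good sign via integration by parts. The only cosmetic differences are that the paper bounds $\|\nabla(u\cdot\nabla u)\|_{L^2}$ by $\|u\|_{L^\infty}\|\nabla^2 u\|_{L^2}+\|\nabla u\|_{L^4}^2$ rather than your $\|u\|_{L^\infty}\|\nabla^2 u\|_{L^2}+\|\nabla u\|_{L^\infty}\|\nabla u\|_{L^2}$, and it estimates $\partial_1(u\cdot\nabla u)$ directly in $L^{1/(1/2+s/d)}$ rather than integrating $\partial_1$ by parts; both variants land in the same template after placing $\|\Lambda^{d/2-s}u\|_{L^2}$ and $\|u\|_{L^\infty}$ (or $\|\nabla u\|_{L^2}$) into $\|(u,\eta)\|_{H^2\cap\dot H^{-s}}$ and the remaining factors into $\|\nabla(u,\eta)\|_{H^2\cap\dot H^{-s}}$.
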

\begin{proof}
Similarly as before, we first find
\begin{equation*} 
    \begin{split}
		&	\frac{1}{2}\frac{d}{dt}\|  (u,\eta)\|^2_{\dot{H}^{-s}} + \| \nabla u \|^2_{\dot{H}^{-s}}+\| \nabla\cdot u\|^2_{\dot{H}^{-s}} + 2\|\bar u\|_{\dot H^{-s}}^2\\
	& \quad =  -\int \Lambda^{-s}\nabla\cdot(\eta u)\Lambda^{-s}\eta \,dx-\int \Lambda^{-s}(u\cdot\nabla u)\cdot\Lambda^{-s}u \,dx-\int \Lambda^{-s}(|u|^2 u)\cdot\Lambda^{-s}u \,dx\\
	&\qquad -\int \Lambda^{-s}| u|^2\Lambda^{-s}\bar{u}\,dx-2\int \Lambda^{-s}(\bar u u)\cdot\Lambda^{-s}u \,dx\\
	&\qquad +\int \Lambda^{-s}(u\cdot\nabla (u\cdot \nabla u)+u\cdot \nabla ({\rm e}_1\cdot\nabla u)+{\rm e}_1\cdot \nabla (u\cdot\nabla u)+{\rm e}_1\cdot \nabla ({\rm e}_1\cdot\nabla u))\cdot\Lambda^{-s} u\, dx\\
	&\quad =:  I+II+III+IV+V+VI,
    \end{split}
\end{equation*}
where we use the estimates in the proof of Lemma \ref{lem_ns_pptt} to get
\begin{equation*}
\begin{split}
	&I+II+III+IV+V\\
	&\quad \leq  C\|\nabla (u,\eta)\|_{H^{d-2}\cap\dot{H}^{-s}}^2\|(u,\eta)\|_{\dot H^{-s}}(1 + \|(u,\eta)\|_{\dot H^{-s}}) + \frac12 \|\bar u\|_{L^2}^2\\
	&\qquad +C\|\nabla u\|_{H^{d-2}\cap\dot{H}^{-s}}^2\|u\|_{L^2}^2 + \frac12 \|\bar u\|_{\dot H^{-s}}^2 +C \| u\|^2_{H^{d-2}\cap\dot{H}^{-s}} \|\nabla u\|^2_{H^{d-2}\cap\dot{H}^{-s}}\\
	&\quad \leq C (\|(u,\eta)\|_{H^{d-2}\cap\dot{H}^{-s}}+  \|(u,\eta)\|^2_{H^{d-2}\cap\dot{H}^{-s}}  )\|\nabla (u,\eta)\|^2_{H^{d-2}\cap \dot{H}^{-s}} +\frac{1}{2}\|\bar{u}\|^2_{L^2\cap\dot{H}^{-s}}.
\end{split}
\end{equation*}
For $VI$, we divide it into four terms:
\begin{equation*}
	\begin{split}
	VI&=\int \Lambda^{-s}(u\cdot\nabla (u\cdot \nabla u)+u\cdot \nabla ({\rm e}_1\cdot\nabla u)+{\rm e}_1\cdot \nabla (u\cdot\nabla u)+{\rm e}_1\cdot \nabla ({\rm e}_1\cdot\nabla u))\cdot\Lambda^{-s} u \,dx\\
	&=:VI_1+VI_2+VI_3+VI_4
	\end{split}
\end{equation*}
and estimate the each term as
\begin{equation*}
	\begin{split}
		VI_1
		& \ls  \|\Lambda^{-s}(u\cdot\nabla (u\cdot \nabla u))\|_{L^2}\|\Lambda^{-s}u\|_{L^2}\cr
		& \ls \|u\cdot\nabla (u\cdot \nabla u)\|_{L^{\frac{1}{\frac{1}{2}+\frac{s}{d}}}}\|\Lambda^{-s}u\|_{L^2}\\
		&\ls \|\Lambda^{\frac{d}{2}-s}u\|_{L^2}(\|[\partial, u\cdot\nabla] u\|_{L^2}+\| u\cdot\partial \nabla u\|_{L^2})\|\Lambda^{-s}u\|_{L^2}\\
		& \ls \|\Lambda^{\frac{d}{2} -s}u\|_{L^2}(\| u\|_{L^\infty}\| \partial\nabla u\|_{L^2}+\|\nabla u\|_{L^4}\|\nabla u\|_{L^4})\|\Lambda^{-s}u\|_{L^2}\\
		& \ls \|u\|_{H^2}\|\nabla u\|^2_{H^2\cap \dot{H}^{-s}}\|u\|_{\dot{H}^{-s}}, \cr
		VI_2+VI_3
		& \ls \left(\|\partial u\cdot\nabla u\|_{L^{\frac{1}{\frac{1}{2}+\frac{s}{d}}}}+\|u \cdot\partial\nabla u\|_{L^{\frac{1}{\frac{1}{2}+\frac{s}{d}}}}\right)\|u\|_{\dot{H}^{-s}}\\
		& \ls \|\nabla u\|^2_{H^2\cap \dot{H}^{-s}}\|u\|_{\dot{H}^{-s}},
	\end{split}
\end{equation*}
and
\begin{equation*}
	\begin{split}
		IV_4 =\int({\rm e}_1\cdot \nabla ({\rm e}_1\cdot\nabla \Lambda^{-s} u))\Lambda^{-s}u \,dx =-\int|{\rm e}_1\cdot\nabla \Lambda^{-s} u|^2 \,dx
	\end{split}
\end{equation*}
due to Lemma \ref{big lemma} (v) and (vi).

We finally combine all of the above estimates to have
\begin{equation*}
	\begin{split}
		&\frac{1}{2}\frac{d}{dt}\|  (u,\eta)\|^2_{\dot{H}^{-s}} + \| \nabla u\|^2_{\dot{H}^{-s}}+\|\bar{u}\|^2_{\dot{H}^{-s}}\ls    (\|(u,\eta)\|_{H^2\cap\dot{H}^{-s}}+\|(u, \eta)\|^2_{H^2\cap\dot{H}^{-s}})\|\nabla (u,\eta)\|^2_{H^2\cap \dot{H}^{-s}}+\frac{1}{2}\|\bar{u}\|^2_{L^2} .
\end{split}
\end{equation*}
This completes the proof.
\end{proof}

In the lemma below, we also provide the estimate giving the dissipation rate for $\eta$. 

\begin{lemma}\label{hypo decay1}Let the assumptions of Lemma \ref{hypo decay} be satisfied. Then there exists $C>0$ independent of $T$ such that
\begin{equation*}
	\begin{split}
	\frac{d}{dt}\int \Lambda^{-s} u\cdot \Lambda^{-s} \nabla \eta \,dx+\frac{3}{4}\|\nabla\eta\|^2_{\dot H^{-s}}\leq C(\|(u,\eta)\|^4_{H^3}+1)\| \Lambda^{1-s} u\|^2_{H^3}+C\|\bar{u}\|^2_{\dot H^{-s}}.	
	\end{split}
\end{equation*}
\end{lemma}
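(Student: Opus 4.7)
The plan is to follow the blueprint of Lemma \ref{hyper}, with the fractional operator $\Lambda^{-s}$ playing the role of the differential operator $\partial^k$ and the negative-order product/Hardy--Littlewood--Sobolev estimates from Lemma \ref{big lemma} (v)--(vi) replacing the Kato--Ponce commutator inequalities. First, split
\begin{align*}
\frac{d}{dt}\int \Lambda^{-s}u\cdot\Lambda^{-s}\nabla\eta\,dx = \int\Lambda^{-s}u_t\cdot\Lambda^{-s}\nabla\eta\,dx + \int\Lambda^{-s}u\cdot\Lambda^{-s}\nabla\eta_t\,dx =: I+II,
\end{align*}
and substitute the two equations of \eqref{original}.

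For $I$, the pressure contribution gives the coercive $-\|\nabla\eta\|^2_{\dot H^{-s}}$. The linear pieces $\Delta u$, $\nabla\nabla\cdot u$, $e_1\cdot\nabla u$, and $\bar u\, e_1$ are handled by Cauchy--Schwarz after noting $\|\Lambda^{-s}\partial^2 u\|_{L^2}\le \|\Lambda^{1-s}u\|_{\dot H^1}$ and $\|\Lambda^{-s}(\bar u\,e_1)\|_{L^2}=\|\bar u\|_{\dot H^{-s}}$; each produces at most $C\|\Lambda^{1-s}u\|_{H^3}^2+C\|\bar u\|_{\dot H^{-s}}^2$ plus a small fraction of $\|\nabla\eta\|_{\dot H^{-s}}^2$. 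The quadratic and cubic terms $u\cdot\nabla u$, $|u|^2 u$, $\bar u u$, $|u|^2 e_1$ are estimated via Lemma \ref{big lemma} (v) with $1/p=1/2+s/d$, which converts $\|\Lambda^{-s}(\cdot)\|_{L^2}$ into $\|\cdot\|_{L^p}$, so that H\"older together with Lemma \ref{big lemma} (vi) deliver bounds of the form $C(\|(u,\eta)\|_{H^3}+\|(u,\eta)\|^2_{H^3})\|\Lambda^{1-s}u\|_{H^3}^2$. For $II$, integrate by parts once to trade $\Lambda^{-s}\nabla\eta_t$ for $\Lambda^{-s}\nabla\cdot u$ paired against $\Lambda^{-s}\eta_t$; using $\eta_t=-\nabla\cdot(\eta u)-e_1\cdot\nabla\eta-\nabla\cdot u$, the three resulting pieces are bounded exactly as in Lemma \ref{lem_ns_pptt}, contributing $C(1+\|(u,\eta)\|^2_{H^3})\|\Lambda^{1-s}u\|_{H^3}^2$ plus a small multiple of $\|\nabla\eta\|_{\dot H^{-s}}^2$.

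The main obstacle is the nonlocal nonlinear term $(u+e_1)\cdot\nabla((u+e_1)\cdot\nabla u)$ in $u_t$. Following the decomposition of the TT contribution in the proof of Lemma \ref{hyper}, I expand it into four pieces $J_1,\ldots,J_4$ corresponding to the four combinations of $u$ and $e_1$. The purely linear $J_4$ arising from $e_1\cdot\nabla(e_1\cdot\nabla u)=\partial_{11}^2 u$ is controlled using $\|\Lambda^{-s}\partial^2 u\|_{L^2}\le\|\Lambda^{1-s}u\|_{\dot H^1}$ and Cauchy--Schwarz, while the three mixed pieces $J_1,J_2,J_3$ require Lemma \ref{big lemma} (v) to reduce $\|\Lambda^{-s}(\cdot)\|_{L^2}$ to an $L^p$ norm of the cubic expression; then H\"older combined with Lemma \ref{big lemma} (vi) splits this into a factor of $\|\Lambda^{d/2-s}u\|_{L^2}$ together with $L^\infty$--$L^2$ pairings of $\partial u$ and $\partial^2 u$, which are absorbed into $C\|u\|_{H^3}^4\|\Lambda^{1-s}u\|_{H^3}^2$ by Sobolev embedding. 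The delicate part is to ensure every derivative loss remains within the budget $\|\Lambda^{1-s}u\|_{H^3}$ on the right-hand side, so that no higher Sobolev norm is generated. Summing all estimates and choosing the Cauchy--Schwarz parameters so that the accumulated $\|\nabla\eta\|_{\dot H^{-s}}^2$ coefficients total at most $\tfrac14$ yields the stated inequality.
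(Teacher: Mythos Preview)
Your proposal is correct and mirrors the paper's own proof: both split into $I+II$, extract the coercive $-\|\nabla\eta\|^2_{\dot H^{-s}}$ from the pressure, use Lemma~\ref{big lemma} (v)--(vi) with $1/p=1/2+s/d$ to handle the quadratic/cubic nonlinearities, decompose the TT term $(u+{\rm e}_1)\cdot\nabla((u+{\rm e}_1)\cdot\nabla u)$ into the four $u/{\rm e}_1$ combinations (with the purely linear piece bounded directly by $\|\Lambda^{-s}\partial^2 u\|_{L^2}$), and integrate by parts in $II$ before substituting the $\eta$-equation. The only cosmetic difference is that the paper writes out the intermediate $L^\infty$--$L^4$ bounds for the cubic piece $u\cdot\nabla(u\cdot\nabla u)$ explicitly, whereas you summarize them as Sobolev embedding into $\|u\|_{H^3}^4\|\Lambda^{1-s}u\|_{H^3}^2$.
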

\begin{proof}
It follows from \eqref{original} that 
\begin{equation*}
	\begin{split}
		\frac{d}{dt}\int \Lambda^{-s} u\cdot\Lambda^{-s}\nabla\eta \,dx&=\int \Lambda^{-s}u_t \cdot \Lambda^{-s}\nabla \eta +\Lambda^{-s}u \cdot \Lambda^{-s}\nabla \eta_t \,dx=:I+II,
	\end{split}
\end{equation*}
where
\begin{align*}
	I&=-\int \Lambda^{-s}(u\cdot\nabla u+\nabla \eta -\Delta u-\nabla\nabla\cdot u +{\rm e}_1\cdot\nabla u +|u|^2u+2\bar{u}u+2\bar{u}{\rm e}_1+|u|^2{\rm e}_1\\
	&\hspace{3cm} +u\cdot\nabla (u\cdot \nabla u)+u\cdot \nabla ({\rm e}_1\cdot\nabla u)+{\rm e}_1\cdot \nabla (u\cdot\nabla u)+{\rm e}_1\cdot \nabla ({\rm e}_1\cdot\nabla u)) \cdot \Lambda^{-s}\nabla\eta \,dx\\
	& \leq -\int |\Lambda^{-s}\nabla\eta|^2\,dx+\frac{1}{32}\|\Lambda^{-s}\nabla \eta\|^2_{L^2}+C\|\nabla u\|^2_{L^2}\|\nabla\Lambda^{-s} u\|^2_{H^1}+C\|\nabla\Lambda^{-s} u\|^2_{H^1}\\
	&\quad  +C\|\Lambda^{\frac{d}{2} -s}u\|^2_{L^2}\| u\|^2_{H^{\frac{3}{2}-\frac{3}{d}}}\|\nabla u\|^2_{L^2}+ C\|\bar{u}\|^2_{L^2}\|\Lambda^{\frac{d}{2}-s} u\|^2_{L^2}+C\|\Lambda^{-s}\bar{u}\|^2_{L^2}+C\|u\|^2_{L^2}\|\Lambda^{\frac{d}{2}-s} u\|^2_{L^2}\\
	&\quad  -\int \Lambda^{-s}(u\cdot\nabla (u\cdot \nabla u)+u\cdot \nabla ({\rm e}_1\cdot\nabla u)+{\rm e}_1\cdot \nabla (u\cdot\nabla u)+{\rm e}_1\cdot \nabla ({\rm e}_1\cdot\nabla u))\cdot \Lambda^{-s}\nabla\eta \,dx.
\end{align*}
Here the last term on the right-hand side of the above inequality can be estimated as
\begin{equation*}
\begin{split}
	&\int \Lambda^{-s}(u\cdot\nabla (u\cdot \nabla u)+u\cdot \nabla ({\rm e}_1\cdot\nabla u)+{\rm e}_1\cdot \nabla (u\cdot\nabla u)+{\rm e}_1\cdot \nabla ({\rm e}_1\cdot\nabla u))\cdot \Lambda^{-s}\nabla\eta \,dx  =:I_1+I_2+I_3+I_4
	\end{split}
\end{equation*}
with 
\begin{equation*}
	\begin{split}
		I_1 & \leq  \|\Lambda^{-s}(u\cdot\nabla (u\cdot \nabla u))\|_{L^2}\|\Lambda^{-s}\nabla\eta\|_{L^2}\cr
		&\ls \|u\cdot\nabla (u\cdot \nabla u)\|_{L^{\frac{1}{\frac{1}{2}+\frac{s}{d}}}}\|\Lambda^{-s}\nabla\eta\|_{L^2}\\
		&\ls \|\Lambda^{\frac{d}{2} -s}u\|_{L^2}(\|[\partial, u\cdot\nabla] u\|_{L^2}+\| u\cdot\partial\nabla u\|_{L^2})\|\Lambda^{-s}\nabla\eta\|_{L^2}\\
		& \ls \|\Lambda^{\frac{d}{2} -s}u\|_{L^2}(\| u\|_{L^\infty}\| \partial^2 u\|_{L^2}+\|\nabla u\|^2_{L^4})\|\Lambda^{-s}\nabla\eta\|_{L^2}\cr
		&\ls \|u\|^2_{H^3}\|\nabla u\|_{H^1}\|\Lambda^{-s}\nabla\eta\|_{L^2} \leq C\|u\|^4_{H^3}\|\nabla u\|^2_{H^1}+\frac{1}{32}\|\nabla\eta\|_{\dot{H}^{-s}}, \cr
		I_2+I_3 & \ls \left(\|(u\cdot \partial\nabla u)\|_{L^{\frac{1}{\frac{1}{2}+\frac{s}{d}}}}+ \||\nabla u|^2\|_{L^{\frac{1}{\frac{1}{2}+\frac{s}{d}}}}\right)\|\Lambda^{-s}\nabla\eta\|_{L^2} \cr
		  &\ls \|u\|_{H^3}\|\nabla u\|_{H^1}\|\Lambda^{-s}\nabla\eta\|_{L^2} \leq C\|u\|^2_{H^3}\|\nabla u\|^2_{H^1}+\frac{1}{32}\|\nabla\eta\|_{\dot{H}^{-s}},
	\end{split}
\end{equation*}
and
\begin{equation*}
	\begin{split}
		I_4\ls \|\Lambda^{-s}\partial^2 u\|_{L^2}\|\Lambda^{-s}\nabla\eta\|_{L^2} \leq  C\|\Lambda^{-s}\partial^2 u\|^2_{L^2}+\frac{1}{32}\|\nabla\eta\|^2_{\dot{H}^{-s}}
	\end{split}
\end{equation*}
thanks to Lemma \ref{big lemma} (i), (v), and (vi).

Analogously, we also estimate $II$ as 
\begin{align*}
	II &=\int\Lambda^{-s}\nabla\cdot u\Lambda^{-s}\nabla\cdot(\eta u) \,dx +\int\Lambda^{-s}\nabla\cdot u {\rm e}_1\cdot \Lambda^{-s}\nabla\eta \, dx+\int\Lambda^{-s}\nabla\cdot u\Lambda^{-s}\nabla\cdot u \,dx\\
	& \leq C\left(\|\nabla\eta\cdot u\|_{L^{\frac{1}{\frac{1}{2}+\frac{s}{d}}}}+\|\eta\nabla\cdot u\|_{L^{\frac{1}{\frac{1}{2}+\frac{s}{d}}}}\right)\|\Lambda^{1-s}u\|_{L^2}+ C\|\Lambda^{1-s}u\|^2_{L^2}+\frac{1}{32}\|\Lambda^{1-s}\eta\|^2_{L^2} \\
	& \leq C\left(\|\nabla\eta\|_{L^2}\|\Lambda^{\frac{d}{2}-s} u\|_{L^2}+\|\Lambda^{\frac{d}{2}-s}\eta\|_{L^2}\|\nabla u\|_{L^{2}}\right)\|\Lambda^{1-s}u\|_{L^2}+ C\|\Lambda^{1-s}u\|^2_{L^2}+\frac{1}{32}\|\Lambda^{1-s}\eta\|^2_{L^2} \\
	& \leq  C\|\eta\|_{H^3}\|\Lambda^{1-s}u\|^2_{H^3}+C \|\Lambda^{1-s}u\|^2_{L^2}+\frac{1}{32}\|\Lambda^{1-s}\eta\|^2_{L^2}.
\end{align*}
Combining those estimates yields
\begin{align*}
	\frac{d}{dt}\int \Lambda^{-s} u\cdot \Lambda^{-s} \nabla \eta\, dx+\frac{3}{4}\|\Lambda^{-s}\nabla\eta\|^2_{L^2}\leq C(\|(u,\eta)\|^4_{H^3}+1)\| \Lambda^{1-s} u\|^2_{H^3}+C\|\Lambda^{-s}\bar{u}\|^2_{L^2}
\end{align*}
for some $C>0$ independent of $T$.
\end{proof}

%
%
%
%
%
%
%

Parallel to Lemma \ref{decay estimate for parabolic}, we modify the energy estimate in Lemma \ref{global tt} by making use of the negative Sobolev space $\dot H^{\beta(d,m)}$.

\begin{lemma}\label{ttdecay lem1} 
Let $T>0$, $m\geq 3$, $\max\{0, -\beta(d,m)\} \leq s<\frac{d}{2}$ with $\beta(d,m)$ given as in \eqref{betadm}, and the assumptions of Lemma \ref{hypo decay} be satisfied.  Then we have
\begin{equation*}
	\begin{split}
		\frac{1}{2}&\frac{d}{dt}\|\partial^{m-1} (u, \eta)\|^2_{H^1}+\|\partial^{m-1}\nabla u\|^2_{H^1}+2\|\partial^{m-1}   \bar{u}\|^2_{H^1}\\
		& \leq \left(C\left(\|u\|_{H^3\cap\dot{H}^{\beta(d,m)}}+\|u\|^2_{H^3\cap\dot{H}^{\beta(d,m)}}
		+\|\eta\|_{H^3}\right)+\frac{3}{32}\right)\left(\| \partial^{m-1}\nabla u\|^2_{H^1}+\|\partial^{m-1}\bar{u}\|^2_{H^1}\right)\\
		&\quad +C(\|(u,\eta)\|_{H^3}+\|(u,\eta)\|^2_{H^3})\|\partial^{m-1}\nabla \eta\|^2_{L^2}.
		\end{split}
\end{equation*}
\end{lemma}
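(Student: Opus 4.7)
The plan is to redo the computation of equation \eqref{repeat} from Lemma \ref{global tt} with $k=m-1$ and $k=m$ and then sum the two estimates, but with the essential refinement that whenever an estimate of the form $\|u\|_{L^{\infty}}\|\partial^{k}w\|_{L^{2}}^{2}$ appears (where $w$ stands for $u$, $\bar u$, or $\eta$), we avoid the rough bound $\|u\|_{L^{\infty}}\lesssim\|u\|_{H^{3}}$ used in Lemma \ref{global tt} and instead invoke Lemma \ref{lem_gd1} (iv) to obtain
\[
\|u\|_{L^{\infty}}\|\partial^{k}w\|_{L^{2}}^{2}\lesssim \|(u,w)\|_{\dot H^{\beta(d,k)}}\bigl(\|\partial^{k}u\|_{L^{2}}^{2}+\|\partial^{k+1}w\|_{L^{2}}^{2}\bigr).
\]
This is exactly the trick used in Lemma \ref{decay estimate for parabolic} for the PPTT system, and it is what introduces the $\dot H^{\beta(d,m)}$ norm in the final smallness factor.

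First I would treat the terms $I$, $II$, $III$, $IV$ on the right-hand side of \eqref{repeat}: terms $I$, $II$ are handled exactly as in Lemma \ref{global tt} since they only produce the harmless $\|(u,\eta)\|_{H^{3}}(1+\|u\|_{H^{3}})\|\partial^{k}\nabla u\|_{H^{1}}^{2}$ contribution. For $III$ and $IV$, which are responsible for the would-be coefficient $\|u\|_{L^{\infty}}$ in front of $\|\partial^{k}\bar u\|_{L^{2}}^{2}$ and $\|\partial^{k}u\|_{L^{2}}^{2}$, I would apply Lemma \ref{lem_gd1} (iv) and Lemma \ref{big lemma} (iii) together with the Sobolev embedding $\dot H^{\beta(d,k)}\cap H^{\frac{(d-2)(k+1)}{2k}}\hookrightarrow \dot H^{\beta(d,k)}\cap H^{3}$ to bound them by
\[
C\|u\|_{\dot H^{\beta(d,k)}\cap H^{3}}\bigl(\|\partial^{k}\bar u\|_{L^{2}}^{2}+\|\partial^{k+1}u\|_{L^{2}}^{2}\bigr)+\tfrac{1}{32}\|\partial^{k}\bar u\|_{L^{2}}^{2}.
\]
Term $V$ produces the bad factor $\|\partial^{k}\nabla\eta\|_{L^{2}}^{2}$ because no dissipation is available for $\eta$ at top order; as in \eqref{term v} this gives the contribution $C(\|(u,\eta)\|_{H^{3}}+\|\eta\|_{H^{3}}^{2})\|\partial^{k}\nabla\eta\|_{L^{2}}^{2}+\tfrac{1}{32}\|\partial^{k}\nabla u\|_{L^{2}}^{2}$, which is exactly the $\|\partial^{m-1}\nabla\eta\|_{L^{2}}^{2}$ piece that appears in the statement and will later be closed by the hypocoercivity estimate from Lemma \ref{hypo decay1}.

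The main obstacle is the Toner--Tu nonlinearity $VI=\int\partial^{k}\bigl((u+{\rm e}_{1})\cdot\nabla((u+{\rm e}_{1})\cdot\nabla u)\bigr)\partial^{k}u\,dx$. I would split it exactly as in the proof of Lemma \ref{global tt} into $VI_{1}+VI_{2}+VI_{3}+VI_{4}$, use commutators and integration by parts to produce the nonnegative terms $-\|u\cdot\nabla\partial^{k}u\|_{L^{2}}^{2}$ and $-\|{\rm e}_{1}\cdot\nabla\partial^{k}u\|_{L^{2}}^{2}$ that absorb the dangerous pieces, and then estimate the remainder as in \eqref{last recall}. The key point is that every residual term is of the form $\|u\|_{L^{\infty}}^{a}\|\nabla u\|_{L^{\infty}}^{b}\|\partial^{2}u\|_{L^{2d}}^{c}\|\partial^{k}u\|_{L^{2}}^{2}$ or $\|\cdot\|\|\partial^{k}\nabla u\|_{H^{1}}^{2}$ that can be absorbed into $C(\|u\|_{H^{3}}+\|u\|_{H^{3}}^{4})\|\partial^{k}\nabla u\|_{H^{1}}^{2}$, possibly after another application of Lemma \ref{lem_gd1} (iv) to trade one derivative for the $\dot H^{\beta(d,k)}$ norm whenever $k$ is the top index $m$.

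Finally I would sum the inequalities corresponding to $k=m-1$ and $k=m$, noting that $\beta(d,k)$ is decreasing in $k$ so that $\dot H^{\beta(d,m)}\subset\dot H^{\beta(d,m-1)}$, to collect
\[
\tfrac12\tfrac{d}{dt}\|\partial^{m-1}(u,\eta)\|_{H^{1}}^{2}+\|\partial^{m-1}\nabla u\|_{H^{1}}^{2}+2\|\partial^{m-1}\bar u\|_{H^{1}}^{2}
\]
on the left, with the right-hand side controlled by the claimed combination of $\|u\|_{H^{3}\cap\dot H^{\beta(d,m)}}(1+\|u\|_{H^{3}\cap\dot H^{\beta(d,m)}})+\|\eta\|_{H^{3}}$ times $\|\partial^{m-1}\nabla u\|_{H^{1}}^{2}+\|\partial^{m-1}\bar u\|_{H^{1}}^{2}$, an additive $\tfrac{3}{32}$ coming from the accumulated $\tfrac{1}{32}$ small constants, and the non-absorbable $C(\|(u,\eta)\|_{H^{3}}+\|(u,\eta)\|_{H^{3}}^{2})\|\partial^{m-1}\nabla\eta\|_{L^{2}}^{2}$ from term $V$.
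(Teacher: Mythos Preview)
Your proposal is correct and follows essentially the same route as the paper: recall \eqref{repeat} at $k=m-1,m$, sharpen $III+IV$ via Lemma~\ref{lem_gd1}(iv) to bring in the $\dot H^{\beta(d,k)}$ norm (the paper groups $I+II+III+IV$ and cites the argument of Lemma~\ref{decay estimate for parabolic}), quote \eqref{last recall} for $VI$, and treat $V$ separately so that only $\|\partial^{m-1}\nabla\eta\|_{L^2}^2$ survives on the right.

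Two small corrections. First, the assertion ``$\dot H^{\beta(d,m)}\subset\dot H^{\beta(d,m-1)}$'' is false for homogeneous Sobolev spaces on $\R^d$; what is actually used (implicitly in the paper as well) is the interpolation bound $\|u\|_{\dot H^{\beta(d,m-1)}}\lesssim\|u\|_{L^2}^{1-\theta}\|u\|_{\dot H^{\beta(d,m)}}^{\theta}\leq\|u\|_{H^3\cap\dot H^{\beta(d,m)}}$, valid since $\beta(d,m)\leq\beta(d,m-1)\leq0$. Second, the output of \eqref{term v} is $C(\cdots)\|\partial^{k}\eta\|_{L^{2}}^{2}$, not $\|\partial^{k}\nabla\eta\|_{L^{2}}^{2}$; so at $k=m-1$ you still need one application of Lemma~\ref{lem_gd1}(i)--(ii) to upgrade $\|\partial^{m-1}\eta\|_{L^{2}}^{2}$ to $\|\partial^{m-1}\nabla\eta\|_{L^{2}}^{2}$, and this step also produces the additional absorbable term $C\|(u,\eta)\|_{H^{3}}\|\partial^{k}\nabla u\|_{L^{2}}^{2}$ that appears in the paper's estimate of $V$ but is missing from your sketch.
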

\begin{proof}
We first recall the estimate \eqref{repeat} from the proof of Lemma \ref{global tt}:
\begin{equation*}
	\begin{split}
		&\frac{1}{2}\frac{d}{dt}\|\partial^k (u, \eta)\|^2_{L^2}+\|\partial^k\nabla u\|^2_{L^2}+\|\partial^k\nabla\cdot u\|^2_{L^2}+2\|\partial^k   \bar{u}\|^2_{L^2}\\
		&\quad =-\int \partial^k(u \cdot\nabla u)\partial^k u\,dx-\int \partial^k(u |u|^2)\partial^k u \,dx-\int \partial^k( |u|^2)\partial^k\bar{u} \,dx\\
		&\qquad -2\int \partial^k(\bar{u} u)\partial^k u \,dx- \int \partial^k\nabla\cdot(\eta  u)\partial^k \eta \,dx+\int \partial^k ((u+{\rm e}_1)\cdot\nabla ((u+{\rm e}_1)\cdot \nabla u))\partial^k u\,dx\\
		&\quad =:I+II+III+IV+V+VI
		\end{split}
\end{equation*}
for $k=m-1,m$, where
\begin{equation*}
\begin{split}
&I+II+III+IV\cr
&\quad \leq C(\| u\|_{{H^1}\cap \dot{H}^{\beta(d,k)}}+\|u\|^2_{H^1})(\|\partial^{k+1} u\|^2_{L^2}+\|\partial^k \bar{u}\|^2_{L^2}) +\frac{1}{32}\left(\|\partial^k\nabla u\|^2_{L^2}+\|\partial^k \bar{u}\|^2_{L^2}\right).
\end{split}
\end{equation*}
For $V$, we use \eqref{term v} together with applying Lemma \ref{lem_gd1} (i) and (ii) to show that
\begin{equation*}
\begin{split}
		V& \leq C\left(\|\nabla u \|_{L^\infty}\|\nabla\partial^k\eta\|_{L^2}+\|\nabla \eta\|_{L^\infty}\|\partial^k u\|_{L^2}\right)\|\partial^k \eta\|_{L^2}\\
	&\quad +C\|\nabla u \|_{L^\infty}\|\partial^k\eta\|^2_{L^2}+C\|\eta\|^2_{L^\infty}\|\partial^k \eta\|^2_{L^2}+\frac{1}{32}\|\partial^k\nabla u\|^2_{L^2}\\
	& \leq 	C\left(\|(u ,\eta)\|_{H^3}+\|(u ,\eta)\|^2_{H^3}\right)\|\partial^{m-1}\nabla\eta\|^2_{L^2}+C\|(u,\eta)\|_{H^3}\|\partial^k\nabla u\|^2_{L^2}+\frac{1}{32}\|\partial^k\nabla u\|^2_{L^2}. 
	 \end{split}
\end{equation*}
We next use \eqref{last recall} to get
\begin{equation*}
	\begin{split}
		VI 		& \leq C\left(\|u\|_{H^3}+\|u\|^4_{H^3}\right)\|\nabla\partial^k u\|^2_{L^2}	+\frac{1}{32}\|\nabla\partial^k u\|^2_{L^2}.
	\end{split}
\end{equation*}
Finally, we combine all of the above estimates to have
\begin{equation*}
	\begin{split}
		&\frac{1}{2}\frac{d}{dt}\|\partial^{m-1} (u, \eta)\|^2_{H^1}+\|\partial^{m-1}\nabla u\|^2_{H^1}+2\|\partial^{m-1}   \bar{u}\|^2_{H^1}\\
		&\quad \leq \left(C\left(\|u\|_{H^3\cap \dot{H}^{\beta(d,m)}}+\|u\|^2_{H^3\cap \dot{H}^{\beta(d,m)}}
		+\|\eta\|_{H^3}\right)+\frac{3}{32}\right)\left(\|\partial^{m-1}\nabla  u\|^2_{H^1}+\|\partial^{m-1}\bar{u}\|^2_{H^1}\right)\\
		&\qquad +C(\|(u,\eta)\|_{H^3}+\|(u,\eta)\|^2_{H^3})\| \partial^{m-1}\nabla \eta\|^2_{L^2}.
		\end{split}
\end{equation*}
\end{proof}

\begin{theorem}\label{tt decay}
Let the assumptions of Theorem \ref{tt result} be satisfied. If we further assume $\|(u_0,\eta_0)\|_{H^{-s}}\leq \epsilon_1$ for some $\epsilon_1=\epsilon_1(s)>0$, where $0<-\beta(d,m)\leq s<\frac{d}{2}$ with $\beta(d,m)$ given as in \eqref{betadm}, then there exists a constant $C>0$ independent of $t$ such that 
	\begin{equation*}
	\|\Lambda^l (u, \eta)(t)\|_{L^2}\leq \frac{C}{(1+t)^\frac{s+l}{2}}
\end{equation*}
for $-s<l \leq m-1$.
\end{theorem}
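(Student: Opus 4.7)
The plan closely parallels the proof of Theorem \ref{parabolic decay}, but must be adapted to the weaker dissipation structure of the TT model: since $\eta$ carries no direct diffusion, hypocoercivity has to be invoked both in the negative Sobolev estimate and at the top derivative level. The argument decomposes into three steps.

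\textbf{Step 1 (Uniform negative-Sobolev bound).} I would form the functional
\[
\mathcal E_{-s}(t) := \|(u,\eta)(t)\|_{\dot H^{-s}}^2 + \delta_1 \int \Lambda^{-s} u \cdot \Lambda^{-s}\nabla \eta \, dx,
\]
which is equivalent to $\|(u,\eta)\|_{\dot H^{-s}}^2$ for $\delta_1>0$ small. Combining Lemma \ref{hypo decay} with $\delta_1$ times Lemma \ref{hypo decay1} and absorbing the nonlinear remainders using the smallness of $\|(u,\eta)\|_{H^3}$ secured by Theorem \ref{tt result}, one is left with driving terms of the form $\|\bar u\|_{L^2}^2$ and $\|\Lambda^{1-s}u\|_{H^3}^2$ whose time integrals are finite by Theorem \ref{tt result}. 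A bootstrap in the smallness of $\|(u_0,\eta_0)\|_{\dot H^{-s}}\leq \epsilon_1$ then yields $\sup_{t\ge 0}\|(u,\eta)(t)\|_{\dot H^{-s}}\lesssim \epsilon_1$. Splitting frequencies at $|\xi|=1$ together with the $H^3$ bound immediately transfers this smallness to $\dot H^{\beta(d,m)}$ since $-s\le \beta(d,m)\le 0$.

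\textbf{Step 2 (Closed top-level estimate).} At the top derivative level I would form
\[
\mathcal E_{m-1}(t) := \|\partial^{m-1}(u,\eta)(t)\|_{H^1}^2 + \delta_2 \int \partial^{m-1} u \cdot \partial^{m-1}\nabla \eta \, dx,
\]
equivalent to $\|\partial^{m-1}(u,\eta)\|_{H^1}^2$ for $\delta_2>0$ small. Adding $\delta_2$ times Lemma \ref{hyper} at $k=m-1$ to Lemma \ref{ttdecay lem1} and using the smallness of $\|(u,\eta)\|_{H^3\cap \dot H^{\beta(d,m)}}$ from Step 1 to subordinate the bad factors, I expect to close
\[
\frac{d}{dt}\mathcal E_{m-1}(t) + c\bigl(\|\partial^{m-1}\nabla u\|_{H^1}^2 + \|\partial^{m-1}\bar u\|_{H^1}^2 + \|\partial^{m-1}\nabla \eta\|_{L^2}^2\bigr) \leq 0
\]
for some $c>0$ independent of $t$.

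\textbf{Step 3 (ODI and interpolation).} The Gagliardo--Nirenberg inequality (Lemma \ref{big lemma} (ii)) with the $\dot H^{-s}$ bound from Step 1 yields
\[
\|\partial^{m-1}(u,\eta)\|_{L^2}^{\frac{m+s}{m-1+s}} \leq C\,\|\partial^m(u,\eta)\|_{L^2},
\]
so the dissipation in Step 2 bounds $c_1\mathcal E_{m-1}^{1+1/(m-1+s)}$ from below and the estimate becomes the autonomous ODI
\[
\frac{d}{dt}\mathcal E_{m-1}(t) + c_1\mathcal E_{m-1}(t)^{1+\frac{1}{m-1+s}} \leq 0,
\]
whose integration produces $\|\partial^{m-1}(u,\eta)(t)\|_{H^1}\leq C(1+t)^{-(m-1+s)/2}$. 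For intermediate $l$ with $-s<l\leq m-1$, the standard interpolation
\[
\|\Lambda^l(u,\eta)(t)\|_{L^2} \leq \|\Lambda^{m-1}(u,\eta)(t)\|_{L^2}^{\frac{s+l}{s+m-1}}\|\Lambda^{-s}(u,\eta)(t)\|_{L^2}^{\frac{m-1-l}{s+m-1}}
\]
then delivers the claimed rate $C(1+t)^{-(s+l)/2}$.

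The principal obstacle I anticipate is Step 2. The cross-term hypocoercivity identity in Lemma \ref{hyper} produces several awkward terms through the iterated transport $(v\cdot\nabla)^2v$ and the density coupling $\nabla\cdot(\eta u)$; ensuring that each one is absorbed either by the dissipation or by the small factor $\|(u,\eta)\|_{H^3\cap\dot H^{\beta(d,m)}}$ requires the full force of the tailored Sobolev interpolations collected in Lemma \ref{lem_gd1}, together with a careful balance between $\delta_2$ and the $H^3$-smallness of the solution. The restriction $l\leq m-1$ (rather than $l\leq m$ as in the PPTT case) traces back to exactly this step: Lemma \ref{hyper} supplies dissipation for $\eta$ only one derivative below that available for $u$, so the interpolation anchoring the ODI has to live at the $\dot H^{m-1}\cap \dot H^m$ level of $(u,\eta)$ rather than one order higher.
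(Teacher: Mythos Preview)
Your proposal is correct and follows essentially the same three-step architecture as the paper's proof: negative-Sobolev control via hypocoercivity (Lemmas \ref{hypo decay}--\ref{hypo decay1}), a closed top-order inequality combining Lemma \ref{ttdecay lem1} with the cross-term estimate of Lemma \ref{hyper} at $k=m-1$, and the interpolation-driven ODI. The only cosmetic difference is that the paper bundles the $\dot H^{-s}$ and $H^3$ estimates into a single closed Lyapunov functional in Step~1, whereas you integrate the $\dot H^{-s}$ inequality against time-integrable forcing supplied by Theorem \ref{tt result}; both routes close under the same smallness hypotheses.
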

\begin{proof}[Proof of Theorem \ref{tt decay}]We choose sufficiently small $\delta_0\ll1$ such that 
\begin{equation*}
C_0 \|\partial^{m-1} (u, \eta)\|_{H^1}^2  \leq \|\partial^{m-1} (u, \eta)\|_{H^1}^2 +\delta_0\int \partial^{m-1} u\cdot\nabla\partial^{m-1}\eta\, dx \leq \frac1{C_0}\|\partial^{m-1} (u, \eta)\|_{H^1}^2
\end{equation*}	
for some $C_0 > 0$. For sufficiently small $\| (u,\eta)\|_{H^3\cap \dot{H}^{-s}} \ll1$, it follows from Lemmas \ref{hypo decay}, \ref{hypo decay1}, and \ref{ttdecay lem1} that
\begin{align*}
	&\frac{d}{dt}\left(\| (u, \eta)\|_{H^3\cap\dot{H}^{-s}}^2+\delta_0\sum^{2}_{k=0}\int \partial^ku\cdot\nabla\partial^k\eta \,dx+\delta_0\int \Lambda^{-s}u\cdot\nabla\Lambda^{-s}\eta \,dx\right)\\
	&\quad +\frac{1}{2}\delta_0\|\nabla\eta\|^2_{H^{3}\cap \dot{H}^{-s}}+\|\nabla u\|^2_{H^3\cap\dot{H}^{-s}}+\|\bar{u}\|^2_{H^3\cap \dot{H}^{-s}}\leq  0
\end{align*}
and
\begin{equation*}
	\begin{split}
		&\frac{d}{dt}\left(\frac{1}{2}\|\partial^{m-1}( u, \eta)\|^2_{H^1}+\delta_0\int \partial^{m-1} u\cdot\partial^{m-1} \nabla\eta \,dx\right)+\frac{1}{2}\delta_0\|\nabla\partial^{m-1} \eta\|^2_{L^2}+\frac{1}{2}\|\nabla \partial^{m-1} u\|^2_{H^1}\leq 0.
	\end{split}
\end{equation*}
Then, by using almost the same argument as in the proof of Theorem \ref{parabolic decay}, we get 
\begin{equation*}
	\begin{split}
		&\frac{d}{dt}\left(\frac{1}{2}\|\partial^{m-1} (u, \eta)\|^2_{H^1}+\delta_0\int \partial^{m-1} u\cdot\partial^{m-1} \nabla\eta \,dx\right)\\
		&\quad +C\left(\frac{1}{2}\|\partial^{m-1} (u, \eta)\|^2_{H^1}+\delta_0\int \partial^{m-1} u\cdot\partial^{m-1} \nabla\eta \, dx\right)^\frac{m+s}{m-1+s}\leq 0.
	\end{split}
\end{equation*}
This yields 
\begin{equation*}
	\|\partial^{m-1}(u,\eta)(t)\|_{H^1}\leq \frac{C}{(t+1)^\frac{m-1+s}{2}},
\end{equation*}
where $-\frac{d}{2} <-s\leq \beta(d,m)$. Similarly as before, we finally use the interpolation inequality to conclude 
\begin{equation*}
	\|\Lambda^l (u,\eta)\|_{L^2}\leq \|\Lambda^{m-1}(u,\eta)\|^\frac{l+s}{m-1+s} _{L^2}\|\Lambda^{-s}(u,\eta)\|^\frac{m-1-l}{m-1+s} _{L^2}\leq \frac{C}{(t+1)^\frac{l+s}{2} }
\end{equation*} 
for $-s<l \leq m-1$. This completes the proof.
\end{proof}

%
%
%
%
%
%
%
\section*{Acknowledgments}
 
The work of Y.-P. Choi and W. Lee are supported by NRF grant no. 2022R1A2C1002820.

%
%
%
%
%
%
%


\end{document}